\numberwithin{equation}{section}
\theoremstyle{plain}
\newtheorem{theorem}{Theorem}[section]
\newtheorem{prop}[theorem]{Proposition}
\newtheorem{lemma}[theorem]{Lemma}
\newtheorem{corollary}[theorem]{Corollary}
\theoremstyle{definition}
\newtheorem{remark}[theorem]{Remark}
\newcommand{\Rmnum}[1]{\expandafter\@slowromancap\romannumeral #1@}
\newcommand{\mr}{\mathbb{R}}
\newcommand{\ud}{\mathrm{d}}
\newcommand{\ms}{\mathbb{S}}
\newcommand{\fint}{-\mkern -19mu\int}
\keywords{Q-curvature, Cohn-Vossen inequality,  Conformal mass}
\subjclass{Primary: 53C18,   Secondary: 58J90.}
\address{Mingxiang Li, Department  of Mathematics \& Institue of Mathematical Sciences, Chinese University of Hong Kong, Shatin, NT, Hong Kong}
\email{mingxiangli@cuhk.edu.hk}
\begin{document}
	\title{Conformal metrics  with finite total Q-curvature revisited}
	\author{Mingxiang Li}
\date{}
\maketitle

	\begin{abstract}
		Given  a conformal metric with finite total Q-curvature, we show  that the  assumptions on scalar curvature sensitively govern the Q-curvature integral. Additionally, we  introduce a conformal mass for such manifolds.  Using such mass, we provides a necessary and sufficient condition for the metric to be normal without assuming metric completeness. As applications, we derive volume comparison theorems and prove a positive mass type theorem related to Q-curvature.
	\end{abstract}

	\section{Introduction}

	For compact surfaces, the celebrated Chern-Gauss-Bonnet formula \cite{CHern} intricately links the surface's geometry to its topology. Analogously, for complete open surfaces, this connection is elucidated by the works of Cohn-Vossen \cite{CV},  Huber \cite{Hu}, and Finn \cite{Fin}. For any complete open surface $(M,g)$ with integrable negative part of Gaussian curvature $K_g$, the following inequality holds
	\begin{equation}\label{CV inequ}
		\int_MK_g\ud v_g \leq 2\pi\chi(M)
	\end{equation}
	where  $\chi(M)$ is the  Euler characteristic of $M$. We also suggest  interested readers referring to \cite{Li-Tam} and \cite{Muller-Sverak} for additional insights. 
	
	When it comes to higher-dimensional manifolds, is there a similar connection to explore?   This is the  Problem 11 in chapter 7 of \cite{Schoen-Yau} established by Schoen and Yau.  
	Some progress has been made by  Walter \cite{Walter}, Poor \cite{Poor},  and Greene-Wu \cite{GW} under the assumption of non-negative sectional curvature.  For complete Riemannian manifolds $(M,g)$  with non-negative Ricci curvature, Yau proposed a significant conjecture in \cite{Schoen-Yau}, stating that
	\begin{equation}\label{Yau's conjecture}
		\limsup_{r\to\infty} r^{2-n} \int_{B^g_r(p)} R_g dv_g < +\infty
	\end{equation}
	where $R_g$ denotes  the scalar curvature and $B^g_r(p)$ represents the geodesic ball centered at point $p$  with radius  $r$.  This conjecture can be interpreted as a higher-dimensional analogue of the classical Cohn-Vossen inequality \eqref{CV inequ}. Partial progress toward resolving this conjecture has been achieved in the works of \cite{Xu gy} and \cite{Zhu}.

	In higher-dimensional manifolds, Q-curvature    introduced by Branson \cite{Bra} is  a natural generalization of Gaussian curvature. In particular,  for a four-dimensional manifold $(M,g)$, Q-curvature is 
	defined by
	$$Q_g:=-\frac{1}{6}(\Delta_gR_g-R_g^2+3|Ric_g|^2)$$
	where $\Delta_g$ is the Laplace-Beltrami operator,   $Ric_g$ and $R_g$ are the Ricci curvature and scalar curvature  of $(M,g)$ respectively. For closed four-dimensional manifold $(M,g)$, the Chern-Gauss-Bonnet formula  can be stated as follows
	$$\int_M\left(|W|^2+4Q_g\right)\ud\mu_g=32\pi^2 \chi(M)$$
	where $W$ is the Weyl tensor. Combining with the Paneitz operator \cite{Paneitz}, for a conformal metric $\tilde g=e^{2u}g$, there holds a conformally invariant equation
	$$
P_gu+Q_g= Q_{\tilde g}e^{4u}
$$
where the Paneitz operator $P_g$ is defined by $$P_g=\Delta_g^2-\mathrm{div}_g\left((\frac{2}{3}R_gg-2Ric_g)d\right).$$	
In particular, for a conformal metric $g=e^{2u}|dx|^2$ on $\mr^n$ where $n\geq 4$ is an even integer, the Q-curvature  satisfies 
the conformally invariant equation:
\begin{equation}\label{equ:conformal eqution}
	(-\Delta)^{\frac{n}{2}}u=Qe^{nu}.
\end{equation}
More information  about the equation \eqref{equ:conformal eqution} can be found in \cite{B-H-S},  \cite{CQY2},  \cite{CL 91 Duke}, \cite{Cheng-Lin}, \cite{Li23}, \cite{Li 23 Q-curvature}, \cite{Li 24 Obstruction}, \cite{LW}, \cite{Lin}, \cite{Mar MZ}, \cite{SW}, \cite{Wang IMRN},  \cite{Wang Yi},  \cite{WX}, \cite{Xu05},  and the references therein.

With the assistance of Q-curvature, Chang, Qing, and Yang \cite{CQY} established an analogous Cohn-Vossen inequality \eqref{CV inequ} for a complete and non-compact manifold $(\mathbb{R}^4, e^{2u}|dx|^2)$. Fang \cite{Fa} and   Ndiaye-Xiao  \cite{NX} further extended this to higher dimensional cases.  For the reader's convenience, we repeat their results as follows.
	\begin{theorem}\label{thm: CQY fang NX}(See \cite{CQY}, \cite{Fa}, \cite{NX})
		Given a complete and  conformal metric $g=e^{2u}|dx|^2$ on $\mr^n$ where $n\geq 4$ is an even integer with finite total Q-curvature. Suppose that  
		the  scalar curvature $R_g\geq 0$ near infinity. Then there holds
		\begin{equation}\label{equ:Qenu leq 1/2}
			\int_{\mr^n}Qe^{nu}\ud x\leq \frac{(n-1)!|\mathbb{S}^n|}{2}
		\end{equation}
		where $|\ms^n|$ denotes the volume of standard n-sphere.
	\end{theorem}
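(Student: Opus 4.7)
The plan is to decompose $u$ into a ``normal part'' encoding the total Q-curvature and a polyharmonic correction, use the scalar curvature hypothesis to force the correction to be constant, and then extract the inequality from metric completeness. Set
$$v(x) := \frac{1}{c_n}\int_{\mr^n}\log\frac{|y|}{|x-y|}\,Q(y)e^{nu(y)}\ud y,\qquad c_n := \frac{(n-1)!|\ms^n|}{2},$$
so that $(-\Delta)^{n/2}v = Qe^{nu}$ on $\mr^n$, and write $\alpha := c_n^{-1}\int_{\mr^n} Qe^{nu}\ud x$ for the normalized total Q-curvature (the quantity to be bounded by $1$). A direct kernel calculation gives the spherical-average asymptotic $\fint_{\partial B_r} v\,\ud \sigma = -\alpha\log r + O(1)$ as $r\to\infty$. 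The difference $P := u - v$ is polyharmonic on $\mr^n$; combining the finiteness of $\int Qe^{nu}\ud x$, Jensen's inequality on spherical averages, and the classical Liouville theorem for polyharmonic functions of at-most-polynomial growth, $P$ must be a polynomial of degree at most $n-1$.

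Next I would use the scalar curvature hypothesis to cut the degree of $P$ down to zero. In conformal coordinates one has
$$R_g = -2e^{-2u}\Bigl((n-1)\Delta u + \tfrac{(n-1)(n-2)}{2}|\nabla u|^2\Bigr),$$
so $R_g \ge 0$ near infinity is equivalent to $\Delta u + \tfrac{n-2}{2}|\nabla u|^2 \le 0$ there. Since $v$ contributes $|\nabla v| = O(r^{-1})$ and $\Delta v = O(r^{-2})$ outside a large ball, any summand $P$ of degree $k\ge 1$ in $u$ would make the nonnegative term $|\nabla P|^2$ grow like $r^{2(k-1)}$, overwhelming $\Delta u$ and violating the inequality on large annuli. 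Taking spherical averages and comparing leading orders then forces $k=0$, so $P$ is a constant $C$ and $u$ has the averaged asymptotic $\fint_{\partial B_r} u\,\ud \sigma = -\alpha\log r + C + o(1)$ as $r\to\infty$.

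Completeness at infinity forces every divergent curve to have infinite $g$-length, so $\int_1^\infty e^{u(r\omega)}\ud r = \infty$ along a.e.\ radial direction $\omega$; combined with the asymptotic $u\sim -\alpha\log r + C$ this becomes $\int_1^\infty r^{-\alpha}\ud r = \infty$, hence $\alpha\le 1$, which is exactly \eqref{equ:Qenu leq 1/2}.

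The hard part is the middle step: turning the sign condition on $R_g$ into a genuine restriction on the degree of the polyharmonic remainder. The nonlinear $|\nabla u|^2$ term interacts with the logarithmic $v$-contribution, so one has to work carefully with spherical averages, exploit the integrability of $Qe^{nu}$ to absorb error terms, and rule out possible cancellations between $\Delta P$ and $|\nabla P|^2$ along sparse directions. A secondary difficulty is upgrading the averaged asymptotic to a ray-wise asymptotic strong enough to justify the completeness conclusion; this is again handled using gradient bounds coming from $\Delta u + \tfrac{n-2}{2}|\nabla u|^2 \le 0$.
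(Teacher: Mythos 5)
Your overall architecture (integral representation $v$ plus polyharmonic remainder, kill the remainder using $R_g\ge 0$, then use completeness) is the right one, but two of your steps have genuine gaps. First, the claim that $P=u-v$ ``must be a polynomial of degree at most $n-1$'' does not follow from finiteness of $\int_{\mr^n}Qe^{nu}\,\ud x$ together with Jensen and a Liouville theorem: a Liouville theorem for polyharmonic functions of polynomial growth presupposes exactly the growth bound you have not established, and since $Q$ may vanish or change sign on large sets, $Qe^{nu}\in L^1$ gives no upper bound whatsoever on $u$, hence none on $P$. (Also, the pointwise bounds $|\nabla v|=O(r^{-1})$, $\Delta v=O(r^{-2})$ are false in general; only sphere-averaged versions hold, cf.\ Lemmas \ref{lem: r^2 Dealtu} and \ref{lem:r^2nabla u^2}.) The established route (Chang--Qing--Yang, Wang--Wang, and Lemma \ref{lem: Delat h} combined with the argument in Section \ref{sec: conformal mass} of this paper) uses the sign of $R_g$ from the very start: $R_g\ge 0$ near infinity yields $\Delta u+\frac{n-2}{2}|\nabla u|^2\le 0$ there, which, after subtracting the averaged estimates for $v$, gives $\fint_{B_r(x)}\Delta h\,\ud y+c\fint_{B_r(x)}|\nabla h|^2\,\ud y\le o(1)$ for the polyharmonic remainder; Pizzetti's formula and an induction then force $h$ to be constant, with no intermediate ``polynomial'' step needed or available. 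Your degree-cutting argument for a polynomial $P$ is fine in spirit, but it starts from an unproved hypothesis.

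Second, the completeness step is not a soft consequence of the averaged asymptotic $\bar u(r)=(-\alpha+o(1))\log r$. To produce, when $\alpha>1$, a divergent path of finite length via Fubini you need the upper bound $\fint_{\partial B_r(0)}e^{u}\,\ud\sigma\le e^{\bar u(r)+o(1)}$; Jensen gives the reverse inequality, and this upper bound is precisely the nontrivial kernel estimate for normal solutions (Lemma \ref{lem:e^ku=ek bar u}, i.e.\ Lemma 3.2 of \cite{CQY}), proved by splitting the potential and using Lemma \ref{lem: mean value}. Your proposed fix --- ``gradient bounds coming from $\Delta u+\frac{n-2}{2}|\nabla u|^2\le 0$'' --- points in the wrong direction: the sign condition makes $e^{\frac{n-2}{2}u}$ superharmonic, which yields pointwise \emph{lower} bounds for $u$ in terms of its averages, whereas the completeness argument needs upper control of $e^{u}$ on spheres. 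Until these two steps are supplied (normality of the metric from the scalar curvature sign, and the exponential-average lemma), the proposal does not constitute a proof.
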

We say the metric $g$ has  finite total Q-curvature if $Qe^{nu}\in L^1(\mr^n)$ and  “near infinity" means outside a compact set. 
For brevity, we consider the smooth metrics and even dimensional cases throughout this paper. 

A crucial  technique used in \cite{CQY} is  “normal metric" which is introduced by Finn \cite{Fin} in two-dimensional case. Precisely,
we say the solution $u(x)$ to \eqref{equ:conformal eqution}  with $Qe^{nu}\in L^1(\mr^n)$ is  normal if $u(x)$ satisfies the integral equation
	\begin{equation}\label{integral equation}
		u(x)=\frac{2}{(n-1)!|\mathbb{S}^n|}\int_{\mr^n}\log\frac{|y|}{|x-y|}Q(y)e^{nu(y)}\ud y+C
	\end{equation}
	where $C$ is a constant which may be different from line to line throughout this paper.
	In particular, the metric $g=e^{2u}|dx|^2$ with finite total Q-curvature is said to be  normal if $u(x)$ is normal. 
		 Indeed, for any complete and  normal metric, the inequality \eqref{equ:Qenu leq 1/2} holds,  as demonstrated in Theorem 1.3 of \cite{CQY}. The geometric condition $R_g\geq 0$ near infinity plays an important role since it ensures that the metric is normal.   To ensure normal metrics, some other geometric conditions are explored in \cite{Li 23 Q-curvature}, \cite{WW}, \cite{Zhang}.

The scalar curvature $R_g$ is a very subtle geometric quantity. In Chang-Qing-Yang's work \cite{CQY}, the non-negative $R_g$ near infinity is solely used to ensure the normal metric. It is interesting to ask whether exists other geometric control under the same assumption on scalar curvature. Fortunately, we find that the scalar curvature $R_g\geq 0$ near infinity also  gives a sharp lower bound of Q-curvature integral which is very different from two-dimensional case. Furthermore, if the scalar curvature demonstrates controlled decay behavior near infinity, we can establish more restrictive  estimates for the Q-curvature integral.

\begin{theorem}\label{thm: complete R_g geq 0}
	Given a complete and   conformal metric $g=e^{2u}|dx|^2$ on $\mr^n$ where $n\geq 4$ is an even integer with finite total Q-curvature. 
	\begin{enumerate}
		\item Suppose that  
		the  scalar curvature $R_g\geq 0$ near infinity. Then there holds
		$$0\leq \int_{\mr^n}Qe^{nu}\ud x\leq \frac{(n-1)!|\mathbb{S}^n|}{2}.$$
		\item Suppose that  
		the  scalar curvature $R_g(x)\geq C|x|^{-p}$ near infinity for some constant $C>0$ and $p\geq 0$. Then there holds
		$$\frac{(n-1)!|\mathbb{S}^n|}{2}\max\{0, 1-\frac{p}{2}\}\leq \int_{\mr^n}Qe^{nu}\ud x\leq \frac{(n-1)!|\mathbb{S}^n|}{2}.$$
		In particular, if the scalar curvature $R_g\geq C>0$ near infinity, there holds
		$$ \int_{\mr^n}Qe^{nu}\ud x=\frac{(n-1)!|\mathbb{S}^n|}{2}.$$
	\end{enumerate}
\end{theorem}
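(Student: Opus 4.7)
The plan is to use that the hypothesis $R_g\geq 0$ near infinity forces the metric to be normal (this is the main step underlying Theorem~\ref{thm: CQY fang NX}), so that the integral representation \eqref{integral equation} is available. The upper bound $(n-1)!|\mathbb{S}^n|/2$ in both parts is exactly the content of Theorem~\ref{thm: CQY fang NX}, so only the lower bounds require new argument. Writing $\alpha:=\tfrac{2}{(n-1)!|\mathbb{S}^n|}\int_{\mr^n}Qe^{nu}\,\ud x$, the task is to show $\alpha\geq 0$ in part (1) and $\alpha\geq \max\{0,1-p/2\}$ in part (2).

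The core of the argument is an asymptotic expansion of $u$ and its derivatives. Starting from \eqref{integral equation} and differentiating under the integral sign, standard potential-theoretic estimates yield, as $|x|\to\infty$,
$$u(x)=-\alpha\log|x|+O(1),\qquad \nabla u(x)=-\alpha\,\frac{x}{|x|^{2}}+o(|x|^{-1}),\qquad \Delta u(x)=-\frac{\alpha(n-2)}{|x|^{2}}+o(|x|^{-2}).$$
Substituting these into the conformal scalar-curvature identity $R_g=-2(n-1)e^{-2u}\bigl(\Delta u+\tfrac{n-2}{2}|\nabla u|^{2}\bigr)$ and writing $e^{-2u}=|x|^{2\alpha}b(x)$ for some $b(x)$ bounded above and below by positive constants produces
$$R_g(x)=\bigl(1+o(1)\bigr)(n-1)(n-2)\,\alpha(2-\alpha)\,b(x)\,|x|^{2\alpha-2}.$$
For part (1), if $\alpha<0$ then $\alpha(2-\alpha)<0$, so $R_g<0$ near infinity, contradicting the hypothesis; hence $\alpha\geq 0$. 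For part (2), if $0<\alpha<1-p/2$ then $|x|^{2\alpha-2}$ decays strictly faster than $|x|^{-p}$ while $\alpha(2-\alpha)>0$, so $R_g$ cannot dominate $C|x|^{-p}$; and if $\alpha=0$ together with $p<2$, the leading term vanishes and $R_g=o(|x|^{-2})$, which is too small to satisfy $R_g\geq C|x|^{-p}$. Either way, $\alpha\geq 1-p/2$ when $p<2$. Combining with (1) gives $\alpha\geq\max\{0,1-p/2\}$. The special case $R_g\geq C>0$ corresponds to $p=0$, forcing $\alpha\geq 1$; together with $\alpha\leq 1$ from Theorem~\ref{thm: CQY fang NX} this yields $\alpha=1$, which is the stated equality.

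The main technical obstacle is to establish the pointwise asymptotics of $\nabla u$ and $\Delta u$ with enough precision to read off the leading behavior of $R_g$, especially in the degenerate cases $\alpha\in\{0,1\}$. A cleaner alternative, avoiding pointwise control of second derivatives, is to pass to the spherical average $\bar u(r)=\fint_{\partial B_r}u\,\ud\sigma$: the explicit formula obtained from \eqref{integral equation} gives $r\bar u'(r)=-\tfrac{2}{(n-1)!|\mathbb{S}^n|}\int_{|y|<r}Qe^{nu}\,\ud y$, so $r\bar u'(r)\to -\alpha$ as $r\to\infty$. Combining this with Jensen's inequality $\overline{|\nabla u|^{2}}\geq(\bar u')^{2}$ and the scalar-curvature identity in averaged form produces an ODE inequality $\bar u''+\tfrac{n-1}{r}\bar u'+\tfrac{n-2}{2}(\bar u')^{2}\leq -\tfrac{1}{2(n-1)}\overline{R_g e^{2u}}$. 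Multiplying by $r^{n-1}$ and integrating from some large $R_0$ to $\infty$ converts this into an integral inequality for $\phi(r)=r^{n-1}\bar u'(r)$ whose compatibility with $\phi(r)\sim -\alpha r^{n-2}$ forces the required sign and decay constraints on $\alpha$ quantitatively.
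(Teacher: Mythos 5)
Your overall strategy (use $R_g\geq 0$ near infinity to get normality, quote Theorem \ref{thm: CQY fang NX} for the upper bound, and extract the lower bounds from the conformal scalar-curvature identity) is the same as the paper's, but your execution has a genuine gap. The primary route rests on pointwise asymptotics that are false under the hypothesis $Qe^{nu}\in L^1(\mr^n)$: from \eqref{integral equation} one has $-\Delta u(x)=\frac{2(n-2)}{(n-1)!|\mathbb{S}^n|}\int_{\mr^n}|x-y|^{-2}Q(y)e^{nu(y)}\ud y$, and if $Qe^{nu}$ carries a fixed amount of mass in tiny balls around a sequence of points going to infinity, then $|\Delta u|$ (and $|\nabla u|$) at those points is far larger than $|x|^{-2}$ (resp. $|x|^{-1}$); likewise $u(x)=-\alpha\log|x|+O(1)$ fails in general, only $(-\alpha+o(1))\log|x|$-type statements in averaged form survive. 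Consequently the claimed pointwise expansion $R_g=(1+o(1))(n-1)(n-2)\alpha(2-\alpha)b(x)|x|^{2\alpha-2}$ is unjustified (and in the degenerate cases $\alpha\in\{0,2\}$ it says nothing about the sign of $R_g$ anyway), so the contradiction argument built on it does not stand. This is exactly why the paper works throughout with spherical averages (its Lemmas \ref{lem: mean value}--\ref{lem:e^ku=ek bar u}) rather than pointwise expansions.

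Your ``cleaner alternative'' is essentially the paper's proof of Theorem \ref{thm: R_g sign}, but its key input is asserted via an incorrect identity: $r\bar u'(r)=-\frac{2}{(n-1)!|\mathbb{S}^n|}\int_{B_r(0)}Qe^{nu}\ud y$ is a two-dimensional fact (harmonicity of $\log|x-y|$); for $n\geq 3$ one has $\Delta_x\log|x-y|=(n-2)|x-y|^{-2}\neq 0$, so the identity fails and only the limit $r\bar u'(r)\to-\alpha_0$ holds (the paper's Lemma \ref{lem:r partial u}), whose proof requires the Fubini/mean-value estimates of Lemma \ref{lem: mean value} and the estimate \eqref{second term} — precisely the technical content you have skipped. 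For part (2) you additionally need $\fint_{\partial B_r}e^{2u}\ud\sigma\geq e^{2\bar u(r)}$ together with $\bar u(r)=(-\alpha_0+o(1))\log r$ (the paper's Lemma \ref{lem:bar u}, or integration of Lemma \ref{lem:r partial u}), which again you assert rather than prove. Granting those lemmas, your integrated ODE inequality for $\phi(r)=r^{n-1}\bar u'(r)$ does deliver $0\leq\alpha_0\leq 2$ and $\alpha_0\geq 1-\frac{p}{2}$, in the same spirit as the paper, which instead passes to the limits $r^2\fint_{\partial B_r}(-\Delta u)\to(n-2)\alpha_0$ and $r^2\fint_{\partial B_r}|\nabla u|^2\to\alpha_0^2$ (Lemmas \ref{lem: r^2 Dealtu} and \ref{lem:r^2nabla u^2}) and compares them with the lower bound $Cr^{2-p-2\alpha_0+o(1)}$. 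So the missing ingredient is exactly the body of Section \ref{section:  estimate for normal }: without proving the averaged asymptotics, the proposal does not constitute a proof.
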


Drawing on the aforementioned findings and the result in \cite{CQY2}, we can effectively control  the integral of the Q-curvature over four-manifolds with finitely  simple ends. A detailed definition of simple ends can be found in Section \ref{sec: simple ends}.
\begin{theorem}\label{thm: chim(M) R_g geq 0}
	Given  a complete four-dimensional manifold $(M, g)$ with finitely many  conformally flat simple ends and finite total Q-curvature. 
		\begin{enumerate}
		\item 	Suppose that the scalar curvature is non-negative at each end. Then there holds
		$$\chi(M)-\#\{ends\}\leq \frac{1}{32\pi^2}\int_M\left(|W|^2+4Q_g\right)\ud\mu_g\leq \chi(M).$$
		\item  Suppose that the scalar curvature $R_g\geq C$ for some constant $C>0$ at each end. Then there holds
		$$ \int_M\left(|W|^2+4Q_g\right)\ud\mu_g=32\pi^2 \chi(M).$$
	\end{enumerate}
	Here,   $\chi(M)$, $W$ and $\#\{ends\}$ denote the Euler number, Weyl  tensor and the number of  ends respectively.
\end{theorem}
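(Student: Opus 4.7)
The plan is to combine a Chern--Gauss--Bonnet type formula for four-manifolds with finitely many conformally flat simple ends (from the work of Chang--Qing--Yang in \cite{CQY2}) with the sharp Q-curvature integral estimates established in Theorem \ref{thm: complete R_g geq 0}. The guiding idea is that each conformally flat simple end contributes an ``isoperimetric defect'' to the Chern--Gauss--Bonnet equality, and the bounds on Q-curvature integrals at each end translate directly into bounds on these defects.

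First, I would invoke the classification of conformally flat simple ends to identify each end $E_i$ conformally with $(\mr^4 \setminus \overline{B_{R_i}}, e^{2u_i}|dx|^2)$ for some $u_i$ with $Q_g e^{4u_i}\in L^1$ and with the appropriate asymptotic growth corresponding to completeness of the end. Following \cite{CQY2}, the Chern--Gauss--Bonnet defect then admits the decomposition
\begin{equation*}
\chi(M) - \frac{1}{32\pi^2}\int_M\bigl(|W|^2+4Q_g\bigr)\ud\mu_g \;=\; \sum_{i=1}^{k}\bigl(1-\alpha_i\bigr),
\end{equation*}
where $k=\#\{ends\}$ and $\alpha_i := \frac{1}{8\pi^2}\int_{E_i}Q_g\,\ud\mu_g$ records the Q-curvature concentration at the $i$-th end. (Here I have used that $(n-1)!|\ms^n|/2$ equals $8\pi^2$ when $n=4$.)

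Next, for each end $E_i$, I would extend the conformal factor $u_i$ to a complete conformal metric $(\mr^4,e^{2\tilde u_i}|dx|^2)$ on all of $\mr^4$ with finite total Q-curvature by capping off smoothly inside the deleted ball; since the scalar curvature hypothesis is phrased ``at the end,'' i.e., near $|x|=\infty$ in this chart, the extension can be arranged to preserve that hypothesis. Applying Theorem \ref{thm: complete R_g geq 0} to the extended metric then yields $0\leq \alpha_i \leq 1$ in case (1) and $\alpha_i=1$ in case (2). Substituting these bounds into the displayed formula and summing over the $k$ ends immediately gives the two-sided inequality in part (1) and the equality in part (2).

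The main technical obstacle is justifying the end-wise decomposition of the Chern--Gauss--Bonnet defect in the second step. This requires exhausting $M$ by compact domains $M_R$ with boundaries $\partial M_R=\bigsqcup_i \Sigma_i^R$ and showing that, as $R\to\infty$, the Pfaffian boundary terms along each $\Sigma_i^R$ converge to precisely $1-\alpha_i$. Such an asymptotic analysis is carried out in \cite{CQY2} for conformally flat ends, where the Weyl term $|W|^2$ vanishes along the end and the conformal representation is normal thanks to the scalar curvature condition (as recalled in the discussion preceding Theorem \ref{thm: complete R_g geq 0}).
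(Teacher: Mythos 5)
There is a genuine gap: the end-wise Chern--Gauss--Bonnet decomposition you assert is not the identity proved in \cite{CQY2}, and it is false as stated. The formula of \cite{CQY2} (quoted as Theorem \ref{CQY theorem}) expresses the defect $\chi(M)-\frac{1}{32\pi^2}\int_M(|W|^2+4Q_g)\,\ud\mu_g$ as $\sum_i\nu_i$, where $\nu_i$ is the limit of the \emph{isoperimetric ratio} of the $i$-th end, not $1-\alpha_i$ with $\alpha_i=\frac{1}{8\pi^2}\int_{E_i}Q_g\,\ud\mu_g$. A normal end carries the decomposition \eqref{normal end}, which contains an extra term $\alpha_1\log|x|$ plus a part biharmonic at infinity, and the asymptotics (Lemmas \ref{lem:|x|^2nabla^2w}, \ref{lem: r partial w and bar w} combined with Proposition \ref{prop in CQY}) give $\nu_i=1+\alpha_1-\alpha_2$, where $\alpha_2$ is the end's Q-concentration; the coefficient $\alpha_1$ need not vanish. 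A concrete counterexample to your formula is the flat cylinder $\mr\times\ms^3$, viewed as a manifold with two ends $w=-\log|x|$ on $\mr^4\setminus B_1(0)$: there $Q\equiv0$, $W\equiv0$, $\chi=0$, $R_g=6>0$, so the defect is $0$, while your right-hand side would equal $2$. The same example exposes the second misstep: applying Theorem \ref{thm: complete R_g geq 0} to a capped extension controls the total Q-integral of the \emph{extended} metric, which equals $\alpha_2-\alpha_1$ (the cap necessarily carries Q-integral $-8\pi^2\alpha_1$; for the cylinder end the cap carries $8\pi^2$ although the end carries $0$), not the end integral $\alpha_i$ you claim to bound. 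So both the decomposition you plug into and the quantity you estimate are misidentified, and the conclusion does not follow as written.

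The capping idea can be repaired, and then it essentially reproduces the paper's argument: by Proposition \ref{prop: normal end} (or Lemma \ref{lema: normal} for the capped metric) the scalar curvature sign makes the end, respectively the capped metric, normal; one must then identify $\nu_i=1-\tilde\alpha_i$, where $\tilde\alpha_i$ is the normalized total Q-integral of the capped complete normal metric (equivalently $\nu_i=1+\alpha_1-\alpha_2$), after which Theorem \ref{thm: complete R_g geq 0} yields $0\le\nu_i\le1$ in case (1) and $\nu_i=0$ in case (2), and Theorem \ref{CQY theorem} finishes the proof. Establishing exactly this identification and these bounds for $\nu_i$ is what the paper does directly on each end, without capping, via Lemmas \ref{lem:|x|^2nabla^2w}, \ref{lem: r partial w and bar w}, \ref{lem: 0 leq I_g leq 1} and \ref{lem: I_g =0}.
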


Another crucial geometric assumption in Theorem \ref{thm: CQY fang NX} is the metric completeness condition, which is essential for establishing the upper bound of the Q-curvature integral in \eqref{equ:Qenu leq 1/2}. Without this completeness requirement, the inequality \eqref{equ:Qenu leq 1/2} fails to hold in general. For example, consider the conformal metric  $g_0=e^{2u_0}|dx|^2$ where $u_0(x)=\log\frac{2}{1+|x|^2}$.

 Interestingly, we find that the non-negative scalar curvature condition alone - even without metric completeness - still yields effective and sharp control of the Q-curvature integral. Furthermore, we obtain optimal decay estimates for the lower bound of the conformal metric. This is different from the existing results in \cite{Ma-Qing}, \cite{Hyder-Mancini-Mart}, \cite{Li 23 Q-curvature}, and \cite{Zhang}, which  require stronger  curvature assumptions (such as non-negative Ricci curvature or non-negative Q-curvature near infinity) to achieve comparable estimates. 

\begin{theorem}\label{thm: R_g geq 0}
		Given a conformal metric $g=e^{2u}|dx|^2$ on $\mr^n$ where $n\geq 4$ is an even integer with finite total Q-curvature. Suppose that the  scalar curvature $R_g\geq 0$ near infinity.  Then there holds
		\begin{enumerate}
			\item\label{thm:0 leq alpha_0 leq 2}
			 $$0\leq \int_{\mr^n}Qe^{nu}\ud x\leq (n-1)!|\mathbb{S}^n|,$$
			\item\label{thm:u/log x}
			$$\lim_{|x|\to\infty}\inf\frac{u(x)}{\log|x|}= -\frac{2}{(n-1)!|\mathbb{S}^n|}\int_{\mr^n}Qe^{nu}\ud x.$$
		\end{enumerate}
\end{theorem}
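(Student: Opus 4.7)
The plan is to use the scalar curvature hypothesis to establish two-sided pointwise bounds on $u$ near infinity, via the superharmonicity of $w := e^{(n-2)u/2}$, and then to deduce that $u$ equals the log-potential
$$v(x) := \frac{2}{(n-1)!|\ms^n|}\int_{\mr^n}\log\frac{|y|}{|x-y|}Q(y)e^{nu(y)}\,\ud y$$
plus a constant, i.e., that $u$ is normal. With normality in hand, both conclusions of the theorem will follow: part (2) from the standard $\liminf$-asymptotic for log-potentials, and part (1) from the two pointwise bounds. Throughout set $\bar\alpha := \frac{2}{(n-1)!|\ms^n|}\int_{\mr^n}Qe^{nu}\,\ud x$.

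The conformal formula $R_g = -e^{-2u}\bigl(2(n-1)\Delta u + (n-1)(n-2)|\nabla u|^2\bigr)$ together with a direct computation of $\Delta w$ shows that $R_g\geq 0$ on $\{|x|>R_0\}$ is equivalent to $\Delta w\leq 0$ there. Since $w>0$ is then a positive superharmonic function on an exterior domain of $\mr^n$ with $n\geq 4$, the Riesz decomposition (whose harmonic part is bounded at infinity by the Bocher-type theorem for positive harmonic functions on exterior domains, and whose potential part tends to $0$ at infinity) shows that $w$ is bounded at infinity, so $u(x)\leq C$ for $|x|$ large. For the matching lower bound, comparing $w - \epsilon|x|^{-(n-2)}$ to zero via the minimum principle on annuli $\{R_0<|x|<R'\}$ (with $\epsilon$ small enough that the comparison holds on $\partial B_{R_0}$) and letting $R'\to\infty$ gives $w(x)\geq c|x|^{-(n-2)}$, i.e., $u(x)\geq -2\log|x|-C'$ for $|x|$ large.

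To upgrade these bounds to normality, set $h := u - v$; then $(-\Delta)^{n/2}h = 0$, so $h$ is a smooth polyharmonic function on $\mr^n$. A direct spherical-average computation, using $Qe^{nu}\in L^1$, gives $\fint_{\partial B_r(0)}v\,\ud S = -\bar\alpha\log r + O(1)$ as $r\to\infty$. Combined with the two pointwise bounds on $u$, this traps $\fint_{\partial B_r(0)}h\,\ud S$ between two logarithmic functions of $r$. But the generalized mean-value identity
$$\fint_{\partial B_r(x_0)}h\,\ud S = \sum_{k=0}^{n/2-1} c_k\, r^{2k}\,(\Delta^k h)(x_0)$$
realizes the left-hand side as a polynomial in $r^2$; a polynomial trapped between two logarithms must be constant, forcing $(\Delta^k h)(x_0)=0$ for every $k\geq 1$ and every $x_0$. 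Hence $h$ is harmonic. Repeating the same averaging argument with balls centered at far points $x_0$ then shows $h(x_0)=O(\log|x_0|)$, and the Liouville theorem for harmonic functions of at most logarithmic growth forces $h$ to be a constant, so $u = v + c$ is normal.

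Finally, the standard pointwise $\liminf$-asymptotic $\liminf_{|x|\to\infty}v(x)/\log|x|=-\bar\alpha$ for log-potentials (cf.\ \cite{CQY}) applied to $u = v + c$ immediately yields part (2); combining part (2) with the two pointwise bounds of the second paragraph gives $-2\leq -\bar\alpha\leq 0$, i.e., part (1). The most delicate step is the third one: since $Qe^{nu}$ is sign-indefinite, $v$ cannot be compared pointwise with $u$, so one is forced to pass through spherical averages and exploit the polynomial-in-$r^2$ rigidity of the polyharmonic mean-value identity to conclude that $h$ is genuinely a constant rather than merely a low-degree polynomial.
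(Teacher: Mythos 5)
Your overall architecture (prove normality, then read off both parts from the asymptotics of the log-potential) is reasonable, but two of your key analytic claims are false as stated, and they are exactly the points where the scalar curvature hypothesis has to do real work. First, the pointwise upper bound $u\leq C$ near infinity: you justify it by asserting that in the Riesz decomposition of the positive superharmonic function $w=e^{(n-2)u/2}$ on an exterior domain the potential part tends to $0$ at infinity. That is not true: the Newtonian potential of a locally finite (even finite-mass) positive measure can be unbounded along a sequence $x_k\to\infty$ (take mass $m_k$ smeared over $B_{r_k}(x_k)$ with $m_kr_k^{2-n}\to\infty$), so a positive superharmonic function on $\mr^n\setminus B_{R_0}$ need not be bounded at infinity. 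What superharmonicity does give is a bound on spherical averages: since $\fint_{\partial B_r(x_0)}\Delta w\,\ud\sigma\leq 0$, the quantity $r^{n-1}\frac{\ud}{\ud r}\fint_{\partial B_r(x_0)}w\,\ud\sigma$ is non-increasing, whence $\fint_{\partial B_r(x_0)}w\,\ud\sigma\leq C$, and Jensen then bounds $\fint_{\partial B_r(x_0)}u\,\ud\sigma$ from above. This averaged bound is enough both for your Pizzetti trapping step and for the conclusion $\alpha_0\geq 0$ (via $\bar u(r)=(-\alpha_0+o(1))\log r$), but your argument as written rests on an incorrect pointwise principle.

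Second, and more seriously, part (2) does not follow from a ``standard $\liminf$-asymptotic for log-potentials.'' For a signed density $Qe^{nu}\in L^1$ one only has the easy direction $\liminf_{|x|\to\infty}v(x)/\log|x|\leq-\alpha_0$ (compare with spherical averages); the matching lower bound fails in general, because negative $Q$-mass concentrating on shrinking balls around points $x_k\to\infty$ produces downward spikes of $v$ that can push the $\liminf$ below $-\alpha_0$, even to $-\infty$. Ruling this out requires using $R_g\geq 0$ a second time: the paper does so by writing $e^{\frac{n-2}{2}u(x)}\geq\fint_{B_1(x)}e^{\frac{n-2}{2}u}\,\ud y\geq e^{\frac{n-2}{2}\fint_{B_1(x)}u}$ (superharmonic mean value plus Jensen) and combining with $\fint_{B_1(x)}u\,\ud y=(-\alpha_0+o(1))\log|x|$, which yields the sharp lower bound $u(x)\geq-(\alpha_0+o(1))\log|x|$; your crude barrier $u\geq-2\log|x|-C$ is too weak to close this step. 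Incidentally, your route is also longer than necessary at the normality stage: the paper simply invokes the known criterion (Chang--Qing--Yang, Wang--Wang) that $R_g\geq 0$ near infinity forces the metric to be normal, and then obtains $0\leq\alpha_0\leq 2$ from the limits $r^2\fint_{\partial B_r}(-\Delta u)\to(n-2)\alpha_0$ and $r^2\fint_{\partial B_r}|\nabla u|^2\to\alpha_0^2$ combined with the sign of $R_g$, rather than from barrier bounds.
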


	For compact four-dimensional manifolds, Gursky (Theorem B in  \cite{Gursky}) established sharp estimates on the Q-curvature integral under the assumption of a non-negative Yamabe invariant.  Part  \eqref{thm:0 leq alpha_0 leq 2} of Theorem \ref{thm: R_g geq 0} can be viewed as a counterpart  in the context of non-compact manifolds.  As a direct consequence of Part \eqref{thm:u/log x}, we obtain the following corollary. In fact, it is also obtained in \cite{Li 23 Q-curvature} using the distance growth identity (see Theorem 1.4 in \cite{Li 23 Q-curvature}).
	\begin{corollary}
		Given a conformal metric $g=e^{2u}|dx|^2$ on $\mr^n$ where $n\geq 4$ is an even integer with finite total Q-curvature. Suppose that the  scalar curvature $R_g\geq 0$ near infinity. 
	If there holds $\int_{\mr^n}Qe^{nu}\ud x<\frac{(n-1)!|\mathbb{S}|}{2},$  then the  metric $g$ is complete. 
	\end{corollary}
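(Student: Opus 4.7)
The plan is to extract a pointwise lower bound on $e^{u}$ from Part \eqref{thm:u/log x} of Theorem \ref{thm: R_g geq 0}, turn this into a lower bound on the $g$-distance to infinity via a radial comparison, and then conclude completeness by a standard Cauchy-sequence argument. Set $\alpha_0 := \frac{2}{(n-1)!|\mathbb{S}^n|}\int_{\mathbb{R}^n} Qe^{nu}\ud x$, so the hypothesis reads $\alpha_0<1$. Part \eqref{thm:u/log x} of Theorem \ref{thm: R_g geq 0} gives $\liminf_{|x|\to\infty} u(x)/\log|x| = -\alpha_0$, so I may fix $\epsilon>0$ with $\alpha_0+\epsilon<1$ and $R>1$ such that $e^{u(x)}\geq |x|^{-\alpha_0-\epsilon}$ for every $|x|\geq R$.

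Next I would compare the $g$-length of an arbitrary path with a one-dimensional integral. Given $x$ with $|x|>R$ and any piecewise smooth $\gamma:[0,1]\to\mathbb{R}^n$ with $\gamma(0)=0$ and $\gamma(1)=x$, set $r(t):=|\gamma(t)|$. Since $|r'(t)|\leq|\gamma'(t)|$ almost everywhere, partitioning $[0,1]$ according to the maximal intervals where $r$ is monotone inside $\{r\geq R\}$ and applying the pointwise bound yields
\begin{equation*}
L_g(\gamma)\geq \int_{\{t:\,r(t)\geq R\}} e^{u(\gamma(t))}|r'(t)|\ud t \geq \int_R^{|x|} s^{-\alpha_0-\epsilon}\ud s=\frac{|x|^{1-\alpha_0-\epsilon}-R^{1-\alpha_0-\epsilon}}{1-\alpha_0-\epsilon}.
\end{equation*}
Since $\alpha_0+\epsilon<1$, the right-hand side tends to infinity as $|x|\to\infty$, and taking the infimum over $\gamma$ yields $d_g(0,x)\to\infty$, where $d_g$ denotes the $g$-distance.

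To conclude, let $\{x_k\}$ be any $g$-Cauchy sequence. The preceding estimate forces $\{|x_k|\}$ to be Euclidean-bounded, so a Euclidean-convergent subsequence $x_{k_j}\to x_\infty$ exists; on any fixed compact Euclidean neighborhood of $x_\infty$ the smooth factor $e^{2u}$ is bounded above and below, hence the $g$-topology agrees with the Euclidean topology there, so $x_{k_j}\to x_\infty$ in $g$ and the Cauchy hypothesis upgrades this to convergence of the full sequence. The only mildly delicate point in the plan is justifying the first inequality in the displayed estimate when $r(t)$ is non-monotone, but this is routine since the partition into monotone pieces only improves the bound; no serious obstacle is expected.
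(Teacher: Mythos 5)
Your proposal is correct and follows essentially the same route as the paper, which states the corollary as a direct consequence of Part \eqref{thm:u/log x} of Theorem \ref{thm: R_g geq 0}: you simply make explicit the standard steps (the eventual pointwise bound $e^{u(x)}\geq |x|^{-\alpha_0-\epsilon}$ with $\alpha_0+\epsilon<1$, the radial length comparison giving $d_g(0,x)\to\infty$, and the Cauchy-sequence argument), all of which are handled correctly.
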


 Meanwhile, with the  help of Theorem \ref{thm: R_g geq 0} and the classification theorem  for $Q\equiv(n-1)!$ in \eqref{equ:conformal eqution} established in \cite{CY MRL}, \cite{Lin}, \cite{Mar MZ}, \cite{WX}, and \cite{Xu05}, we are able to confirm  Conjecture 2 in \cite{LW}.
\begin{theorem}\label{thm: Volume comparison Q geq n-1!}
	Given a conformal metric $g=e^{2u}|dx|^2$ on $\mr^n$ where $n\geq 4$ is an even integer with finite total Q-curvature. Supposing that  the scalar curvature $R_g\geq 0$ near infinity and the Q-curvature satisfies 
	$$Q\geq (n-1)!,$$
	then there holds
	$$\int_{\mr^n}e^{nu}\ud x\leq |\ms^n|$$
	with equality holds if and only if $u(x)=\log \frac{2\lambda}{\lambda^2+|x-x_0|^2}$ for some constant $\lambda>0$ and $x_0\in \mr^n$.
\end{theorem}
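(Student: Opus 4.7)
The plan is to combine the integral upper bound from Theorem~\ref{thm: R_g geq 0} with the pointwise lower bound $Q \geq (n-1)!$, and then leverage the equality case to invoke a Liouville-type classification.

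For the inequality itself, I would first apply part \eqref{thm:0 leq alpha_0 leq 2} of Theorem~\ref{thm: R_g geq 0} to get $\int_{\mr^n} Q e^{nu}\,\ud x \leq (n-1)!|\ms^n|$. Combining this with the hypothesis $Q \geq (n-1)!$ yields
\begin{equation*}
(n-1)! \int_{\mr^n} e^{nu}\,\ud x \;\leq\; \int_{\mr^n} Q e^{nu}\,\ud x \;\leq\; (n-1)!|\ms^n|,
\end{equation*}
and dividing by $(n-1)!$ gives the claimed volume bound.

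For the rigidity statement, suppose $\int_{\mr^n} e^{nu}\,\ud x = |\ms^n|$. Then both inequalities above are equalities. The first forces $\int_{\mr^n}(Q-(n-1)!)e^{nu}\,\ud x = 0$; since $e^{nu}>0$ everywhere and $Q-(n-1)! \geq 0$ is continuous, this gives $Q \equiv (n-1)!$ on $\mr^n$. The second gives $\int_{\mr^n} Q e^{nu}\,\ud x = (n-1)!|\ms^n|$, which by part \eqref{thm:u/log x} of Theorem~\ref{thm: R_g geq 0} pins down the asymptotic growth
\begin{equation*}
\liminf_{|x|\to\infty} \frac{u(x)}{\log|x|} = -2,
\end{equation*}
which is precisely the decay rate of a standard bubble at infinity. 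Thus $u$ is a smooth solution of $(-\Delta)^{n/2} u = (n-1)!\, e^{nu}$ on $\mr^n$ with finite volume $|\ms^n|$ and the correct asymptotic behavior.

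At this point I would invoke the classification theorems of \cite{CY MRL}, \cite{Lin}, \cite{Mar MZ}, \cite{WX}, and \cite{Xu05}, which conclude that such a solution must take the standard bubble form $u(x) = \log \frac{2\lambda}{\lambda^2 + |x-x_0|^2}$. The main obstacle is not the inequality portion, which is essentially an algebraic consequence of Theorem~\ref{thm: R_g geq 0}, but the verification that the hypotheses required by the classification results are met in our setting. This is precisely where the growth estimate from Theorem~\ref{thm: R_g geq 0}\eqref{thm:u/log x} is crucial: it certifies that $u$ is the `normal' (spherical) solution rather than some other entire solution with pathological asymptotics, so that the classification applies without further assumption.
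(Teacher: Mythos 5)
Your argument is essentially the paper's proof: apply part \eqref{thm:0 leq alpha_0 leq 2} of Theorem \ref{thm: R_g geq 0} together with $Q\geq (n-1)!$ to get the volume bound, and in the equality case force $Q\equiv (n-1)!$ and invoke the classification of solutions to $(-\Delta)^{n/2}u=(n-1)!e^{nu}$. Two remarks. First, the certification that the classification applies is cleaner (and is what the paper does) via normality: since $R_g\geq 0$ near infinity, Lemma \ref{lema: normal} already gives that $u$ satisfies the integral equation \eqref{integral equation}, and the cited classification results (\cite{CY MRL}, \cite{WX}, \cite{Xu05}) apply precisely to normal solutions with $e^{nu}\in L^1$. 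Your alternative route through part \eqref{thm:u/log x} — arguing that $\liminf u/\log|x|=-2$ rules out ``pathological'' solutions — is morally fine but as stated is a gap: to convert a finite $\liminf$ into normality one needs the decomposition $u=v+P$ with $P$ a nonconstant polynomial bounded above (Lin \cite{Lin} for $n=4$, Martinazzi \cite{Mar MZ} in general) and an argument that such a $P$ forces $\liminf u/\log|x|=-\infty$; none of this is spelled out, and it is unnecessary given Lemma \ref{lema: normal}. Second, the statement is an ``if and only if'': you prove only that equality implies $u$ is a standard bubble, and omit the (easy, but required) converse check that for $u(x)=\log\frac{2\lambda}{\lambda^2+|x-x_0|^2}$ one has $R_g\geq 0$, $Q\equiv(n-1)!$, and $\int_{\mr^n}e^{nu}\ud x=|\ms^n|$, which the paper includes.
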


The non-negativity condition of scalar curvature  $R_g$ near infinity plays an essential role in Theorem \ref{thm: R_g geq 0}. This requirement cannot be relaxed to $$R_g\geq -C$$ for some positive constant $C$, as such a weaker condition would generally fail to guarantee control over the Q-curvature integral. To illustrate this, let us consider the specific example where $u_1(x)=-\beta\log(|x|^2+1)+|x|^2$ where $\beta \in \mr$ and examine the metric $g_1=e^{2u_1}|dx|^2$. Through direct computation, we can verify that: $(-\Delta)^{\frac{n}{2}}u_1\in L^1(\mr^n)$, $R_{g_1}\geq -C_1$ for some constant $C_1>0$ and  $$\int_{\mr^n}(-\Delta)^{\frac{n}{2}}u_1\ud x=(n-1)!|\mathbb{S}^n|\beta. $$

If we additionally assume that the scalar curvature is bounded from above or the volume is finite, we will obtain more restrictive control of Q-curvature integral.
\begin{theorem}\label{thm:scalar curvature bouended}
		Given a conformal metric $g=e^{2u}|dx|^2$ on $\mr^n$ where $n\geq 4$ is an even integer with finite total Q-curvature. 
		\begin{enumerate}
			\item 
			Suppose that the  scalar curvature $0\leq R_g\leq C$ near infinity for some positive constant $C$. Then there holds
			$$0\leq \int_{\mr^n}Qe^{nu}\ud x\leq \frac{(n-1)!|\mathbb{S}^n|}{2}\; \mathrm{or}\;  \int_{\mr^n}Qe^{nu}\ud x=(n-1)!|\mathbb{S}^n|.$$
			\item\label{part 2 of R-g leq C} Suppose that the scalar curvature $0\leq R_g\leq C$ near infinity. If the volume is finite i.e. $e^{nu}\in L^1(\mr^n)$, we have 
			$$\int_{\mr^n}Qe^{nu}\ud x=(n-1)!|\mathbb{S}^n|.$$
		\end{enumerate}
\end{theorem}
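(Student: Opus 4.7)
The plan combines Theorem \ref{thm: R_g geq 0} --- which already tells us that
$$
\alpha:=\frac{2}{(n-1)!|\ms^n|}\int_{\mr^n}Qe^{nu}\,\ud x
$$
lies in $[0,2]$ and that $\liminf_{|x|\to\infty}u(x)/\log|x|=-\alpha$ --- with an asymptotic expansion of the scalar curvature at infinity to pin down the admissible values of $\alpha$ from the upper bound $R_g\leq C$. Part (2) then follows from Part (1) together with an elementary finite-volume argument.

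For Part (1), I would first upgrade the liminf conclusion of Theorem \ref{thm: R_g geq 0} to the sharper asymptotic
$$
u(x)=-\alpha\log|x|+c+o(1)\quad\text{as }|x|\to\infty
$$
for some constant $c$. Writing $u=v+p$ with $v$ the normal (Riesz-potential) part and $p$ a polyharmonic polynomial, the scalar curvature formula
$$
R_ge^{2u}=-2(n-1)\Delta u-(n-1)(n-2)|\nabla u|^2
$$
shows that any non-constant $p$ would let $|\nabla p|^2$ (of degree $2\deg p-2$) dominate $\Delta p$ (of degree $\deg p-2$), producing a strictly negative leading contribution to $R_ge^{2u}$ on the complement of the subvariety $\{\nabla p=0\}$ and hence contradicting $R_g\geq 0$ near infinity. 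Thus $p$ is constant. Standard estimates for the Riesz potential then give $\nabla v(x)=-\alpha x/|x|^2+o(|x|^{-1})$ and $\Delta v(x)=-\alpha(n-2)/|x|^2+o(|x|^{-2})$, and substitution into the scalar-curvature formula yields
$$
R_g(x)=(n-1)(n-2)\alpha(2-\alpha)e^{-2c}|x|^{2\alpha-2}+o(|x|^{2\alpha-2}).
$$
The hypothesis $R_g\leq C$ is compatible with this expansion only when $\alpha\leq 1$ (so $|x|^{2\alpha-2}$ stays bounded) or $\alpha(2-\alpha)=0$ (so the leading coefficient vanishes); this proves the dichotomy $\alpha\in[0,1]\cup\{2\}$ asserted in Part (1).

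For Part (2), I would combine Part (1) with the asymptotic $u(x)=-\alpha\log|x|+O(1)$ just established: the hypothesis $\int_{\mr^n}e^{nu}\,\ud x<\infty$ reduces at infinity to $\int|x|^{-n\alpha}\,\ud x<\infty$, which forces $n\alpha>n$, i.e.\ $\alpha>1$. Intersecting with the dichotomy $\alpha\in[0,1]\cup\{2\}$ leaves only $\alpha=2$, as desired.

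The main obstacle will be making the normal reduction and the $c+o(1)$ refinement rigorous, in particular ruling out non-constant polyharmonic polynomial corrections under $R_g\geq 0$. The degree-counting heuristic is clean away from $\{\nabla p=0\}$, but $\nabla v$ and $\Delta v$ have sizes of the same order as the would-be cancellations available for low-degree $p$, so the argument demands careful bookkeeping of the cross terms $\nabla v\cdot\nabla p$ and $\Delta v+\Delta p$, together with the observation that $\{\nabla p=0\}$ is a proper subvariety, so that a set extending to infinity survives on which $R_ge^{2u}<0$, incompatible with the hypothesis.
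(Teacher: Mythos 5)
Your overall strategy (use $R_g\ge 0$ to place $\alpha_0:=\frac{2}{(n-1)!|\ms^n|}\int_{\mr^n}Qe^{nu}\ud x$ in $[0,2]$, then use the upper bound $R_g\le C$ through the formula $R_ge^{2u}=-2(n-1)\Delta u-(n-1)(n-2)|\nabla u|^2$ to exclude $\alpha_0\in(1,2)$) is the same as the paper's, but the way you implement it rests on pointwise asymptotics that are not available under the hypotheses, and this is a genuine gap. For a normal solution with merely $Qe^{nu}\in L^1(\mr^n)$ one cannot upgrade the $\liminf$ statement of Theorem \ref{thm: R_g geq 0} to $u(x)=-\alpha_0\log|x|+c+o(1)$, nor conclude $\nabla v(x)=-\alpha_0 x/|x|^2+o(|x|^{-1})$ and $\Delta v(x)=-\alpha_0(n-2)|x|^{-2}+o(|x|^{-2})$ pointwise: the local part of the potential, $\int_{B_1(x)}\log\frac{1}{|x-y|}Q e^{nu}\ud y$ and its derivatives, is controlled only after averaging over spheres (this is exactly why Lemmas \ref{lem: r^2 Dealtu}, \ref{lem:r partial u}, \ref{lem:r^2nabla u^2}, \ref{lem:bar u}, \ref{lem:e^ku=ek bar u} are all stated for spherical means, with error $o(1)\log r$ for $\bar u$, not $O(1)$). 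Consequently your pointwise expansion $R_g(x)=(n-1)(n-2)\alpha_0(2-\alpha_0)e^{-2c}|x|^{2\alpha_0-2}+o(|x|^{2\alpha_0-2})$ is unjustified (and false in general: $R_g$ may oscillate near infinity), so the dichotomy in Part (1) does not follow as written. There is also a secondary gap in the normal reduction: writing $u=v+p$ with $p$ a polyharmonic \emph{polynomial} is not automatic, since entire polyharmonic functions need not be polynomials; the clean route is simply to invoke Lemma \ref{lema: normal} ($R_g\ge 0$ near infinity implies the metric is normal), after which the argument should be run on spherical averages: $R_g\le C$ gives $r^2\fint_{\partial B_r(0)}(-\Delta u)\ud\sigma-\frac{n-2}{2}r^2\fint_{\partial B_r(0)}|\nabla u|^2\ud\sigma\le Cr^2\fint_{\partial B_r(0)}e^{2u}\ud\sigma\le Cr^{2-2\alpha_0+o(1)}$ by Lemmas \ref{lem:e^ku=ek bar u} and \ref{lem:bar u}, and if $\alpha_0>1$ the right side tends to $0$, forcing $(n-2)\alpha_0-\frac{n-2}{2}\alpha_0^2\le 0$, i.e.\ $\alpha_0=2$.

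The gap propagates into your Part (2). Without the $O(1)$ error term you only get the one-sided bound $u(x)\ge-(\alpha_0+o(1))\log|x|$ (from superharmonicity of $e^{\frac{n-2}{2}u}$, or Jensen applied to $\bar u$), which together with $e^{nu}\in L^1(\mr^n)$ yields $\alpha_0\ge 1$, not the strict inequality $\alpha_0>1$; intersecting with $[0,1]\cup\{2\}$ then leaves the unresolved case $\alpha_0=1$. To close it, use the finite volume to produce radii $r_i\to\infty$ with $r_i\int_{\partial B_{r_i}(0)}e^{nu}\ud\sigma\to 0$, and H\"older's inequality
\begin{equation*}
r^2\fint_{\partial B_r(0)}e^{2u}\ud\sigma\le Cr^2\left(\fint_{\partial B_r(0)}e^{nu}\ud\sigma\right)^{\frac{2}{n}}=C\left(r\int_{\partial B_r(0)}e^{nu}\ud\sigma\right)^{\frac{2}{n}},
\end{equation*}
which along $r_i$ gives $(n-2)\alpha_0-\frac{n-2}{2}\alpha_0^2\le 0$ directly, hence $\alpha_0\in\{0\}\cup[2,\infty)$; combined with $0\le\alpha_0\le 2$ and $\alpha_0\ge 1$ this forces $\alpha_0=2$. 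This is how the paper argues, and it avoids any pointwise refinement of the asymptotics.
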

As an application of Part \eqref{part 2 of R-g leq C} of Theorem \ref{thm:scalar curvature bouended}, we can  answer  Conjecture 1 in \cite{LW} under the additional assumption that the scalar curvature $R_g$ is bounded from above.

\begin{theorem}\label{thm: Volume comparison Q leq n-1!}
	Given a conformal metric $g=e^{2u}|dx|^2$ on $\mr^n$ where $n\geq 4$ is an even integer with finite total Q-curvature. Supposing that  the scalar curvature $0\leq R_g\leq C$ near infinity and the Q-curvature satisfies 
	$$Q\leq (n-1)!,$$
	then there holds
	\begin{equation}\label{volume lower bound}
		\int_{\mr^n}e^{nu}\ud x\geq |\ms^n|
	\end{equation}
	with equality holds if and only if $u(x)=\log \frac{2\lambda}{\lambda^2+|x-x_0|^2}$ for some constant $\lambda>0$ and $x_0\in \mr^n$.
\end{theorem}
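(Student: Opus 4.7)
The plan is to reduce the theorem directly to Part~\eqref{part 2 of R-g leq C} of Theorem~\ref{thm:scalar curvature bouended} together with the pointwise bound on $Q$. First I would split into cases according to whether $e^{nu}\in L^1(\mr^n)$. If the volume is infinite then $\int_{\mr^n}e^{nu}\,\ud x=+\infty\geq |\ms^n|$ is automatic, and moreover equality cannot occur there since the standard bubble $u(x)=\log\frac{2\lambda}{\lambda^2+|x-x_0|^2}$ has finite volume exactly $|\ms^n|$. Thus all the substantive work happens under the extra hypothesis $e^{nu}\in L^1(\mr^n)$.

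In the finite-volume case, the assumption $0\leq R_g\leq C$ near infinity places us precisely within the scope of Theorem~\ref{thm:scalar curvature bouended}\eqref{part 2 of R-g leq C}, which gives
$$\int_{\mr^n}Qe^{nu}\,\ud x=(n-1)!|\ms^n|.$$
Combining this identity with the pointwise bound $Q\leq (n-1)!$ immediately yields
$$(n-1)!|\ms^n|=\int_{\mr^n}Qe^{nu}\,\ud x\leq (n-1)!\int_{\mr^n}e^{nu}\,\ud x,$$
which is exactly the inequality \eqref{volume lower bound}. So the main inequality is essentially a one-liner once Theorem~\ref{thm:scalar curvature bouended} is available.

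For the rigidity statement, suppose equality holds in \eqref{volume lower bound}. Then the volume is finite, so the previous step applies and the displayed chain of inequalities collapses to equalities throughout. Because $(n-1)!-Q\geq 0$ and $e^{nu}>0$ on all of $\mr^n$, this forces $Q\equiv (n-1)!$ pointwise. At that point I would invoke the classification theorem for solutions of \eqref{equ:conformal eqution} with $Q\equiv (n-1)!$ and finite total Q-curvature, as established in \cite{CY MRL}, \cite{Lin}, \cite{Mar MZ}, \cite{WX}, and \cite{Xu05}, to conclude that $u$ must have the stated spherical form. The converse direction (that these explicit $u$ achieve equality) reduces to verifying $\int_{\mr^n}e^{nu}\,\ud x=|\ms^n|$, which is a direct computation using the stereographic projection.

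The one delicate point will be checking that the hypotheses of the classification theorem are actually in force in our setting — those theorems typically require either finite volume or that the total Q-curvature equals $(n-1)!|\ms^n|$. In our situation both are at our disposal by construction, so this should not represent a genuine obstacle, merely a bookkeeping step. Beyond that, the entire argument is a rearrangement of Theorem~\ref{thm:scalar curvature bouended}\eqref{part 2 of R-g leq C} with the sign of $(n-1)!-Q$, and I do not expect any new analytic difficulties.
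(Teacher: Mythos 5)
Your proposal is correct and follows essentially the same route as the paper: reduce to the finite-volume case, apply Part \eqref{part 2 of R-g leq C} of Theorem \ref{thm:scalar curvature bouended} to get $\int_{\mr^n}Qe^{nu}\,\ud x=(n-1)!|\ms^n|$, use $Q\leq (n-1)!$ for the inequality, and invoke the classification of solutions with $Q\equiv(n-1)!$ for the equality case. The only point to state precisely is your ``delicate point'': the classification theorem needed is the one for \emph{normal} solutions (finite volume alone does not rule out non-spherical solutions in dimensions $n\geq 4$), and normality is indeed in force here because $R_g\geq 0$ near infinity together with Lemma \ref{lema: normal} guarantees it, exactly as the paper observes.
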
  

In the  work \cite{Li 23 Q-curvature}, a volume entropy for conformal metrics $g=e^{2u}|dx|^2$ on $\mr^n$ is introduced , defined as 
\begin{equation}\label{volume entropy}
	\tau(g):=\lim_{r\to\infty}\sup\frac{\log V_g(B_r(0))}{\log|B_r(0)|}
\end{equation}
where $B_r(0)$ denotes the Euclidean ball centered at origin with radius $r$, while   $V_g(B_r(0))$ and $|B_r(0)|$ represents the volume of $B_r(0)$ with respect to  the conformal metric $g$ and Euclidean metric,  respectively.
The following theorem is established in \cite{Li 23 Q-curvature}.
\begin{theorem}\label{thm: volume entropy}(Theorem 1.1 in \cite{Li 23 Q-curvature})
	Given a complete conformal metric $g=e^{2u}|dx|^2$ on $\mr^n$ where $n\geq 4$ is an even integer with finite total Q-curvature.  Then the conformal  metric  $g$   is normal if and only if the volume entropy $\tau(g)$ is finite.  Moreover, if $g$ is normal, there holds
	\begin{equation}\label{volume entropy identity}
		\tau(g)=1-\frac{2}{(n-1)!|\mathbb{S}^n|}\int_{\mr^n}Qe^{nu}\ud x.
	\end{equation}
\end{theorem}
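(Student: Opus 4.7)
The plan is to prove the two implications of the biconditional separately, using the integral representation \eqref{integral equation} as the central analytic tool, and to derive the entropy formula from the asymptotic behavior of normal solutions at infinity.

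\textbf{Forward direction (normal $\Rightarrow$ finite entropy and the formula).} Assume $g=e^{2u}|dx|^2$ is normal, and let
$$\alpha:=\frac{2}{(n-1)!|\ms^n|}\int_{\mr^n}Qe^{nu}\ud x.$$
Using \eqref{integral equation}, I would first establish the averaged asymptotic
$$\fint_{\partial B_r(0)} u\,\ud\sigma=-\alpha\log r+O(1)\quad\text{as }r\to\infty,$$
by splitting the representation integral over the three regions $\{|y|<|x|/2\}$, $\{|y-x|<|x|/2\}$, and the remainder, then exploiting $Qe^{nu}\in L^1(\mr^n)$ together with the bound $\int_{\partial B_r(0)}\log|x-y|\,\ud\sigma_x=|\partial B_r(0)|\log\max(r,|y|)$ on each piece. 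Feeding this into $V_g(B_r(0))=\int_{B_r(0)}e^{nu}\ud x$ via polar coordinates and Jensen's inequality gives
$$V_g(B_r(0))\sim r^{n(1-\alpha)}\ \text{if }\alpha<1,\qquad V_g(B_r(0))=O(\log r)\ \text{if }\alpha=1,$$
with $V_g$ uniformly bounded once $\alpha>1$. Dividing $\log V_g(B_r(0))$ by $\log|B_r(0)|=n\log r+O(1)$ yields $\tau(g)=1-\alpha$ in the first case and $\tau(g)=0$ in the others; in every case $\tau(g)$ is finite and equals $1-\alpha$ under the convention that a bounded volume contributes the value zero, which agrees with the bound $\alpha\le 1$ that completeness forces via Theorem \ref{thm: CQY fang NX}.

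\textbf{Converse direction (finite entropy $\Rightarrow$ normal).} Define
$$v(x):=\frac{2}{(n-1)!|\ms^n|}\int_{\mr^n}\log\frac{|y|}{|x-y|}Q(y)e^{nu(y)}\ud y,$$
so that $h:=u-v$ is entire and satisfies $(-\Delta)^{n/2}h=0$ on $\mr^n$. Normality is equivalent to $h$ being constant. By a Liouville theorem for polyharmonic functions, it suffices to show that $h$ has polynomial growth, and then to rule out any nonzero polynomial contribution. For the first step, I would combine the one-sided averaged bound $\fint_{\partial B_r(0)}v\,\ud\sigma\ge -C\log r-C$ (available from the argument in the forward direction applied to $v$ alone) with the upper volume bound implied by finite $\tau(g)$, to show that $\fint_{\partial B_r(0)}u^+$ grows at most polynomially; a polyharmonic function controlled in mean by a polynomial on every sphere is itself a polynomial of bounded degree. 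For the second step, if $h$ were a nonconstant polyharmonic polynomial, it would take values $h(x)\ge c|x|^d$ on a cone of positive solid angle for some $d\ge 1$. On that cone $e^{nu}\ge e^{nv}e^{nc|x|^d}$, and since $v=O(\log|x|)$ on average, the contribution to $V_g(B_r(0))$ would grow faster than any power of $r$, contradicting $\tau(g)<\infty$. Hence $h$ is a constant and $g$ is normal.

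\textbf{Main obstacle.} The delicate step is the converse: translating the integrated entropy hypothesis, which only controls volume, into a pointwise polynomial bound on the polyharmonic remainder $h$. The difficulty is that $e^{nu}$ can be integrable and yet $u$ can be pointwise large on small sets, so one must rule out thin regions where $h$ is enormously positive without having pointwise control to begin with. My approach would be to use the mean-value property of $(-\Delta)^{n/2}$, which reduces $h(x)$ to an average over $\partial B_r(x)$; combined with the fact that $v$ has controlled (logarithmic) averaged growth, any superpolynomial growth of $h$ on a set of positive density would be detected by the volume $V_g(B_r(0))$, closing the argument. Verifying the density/cone statement carefully for a general polyharmonic polynomial, and making the mean-to-pointwise passage quantitative, is where the technical heart of the proof lies.
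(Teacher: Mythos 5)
This theorem is not proved in the present paper at all; it is quoted verbatim from [Li 23 Q-curvature], so your proposal has to stand on its own, and as written the converse direction contains a fatal gap. Your argument that finite $\tau(g)$ forces the polyharmonic remainder $h=u-v$ to be constant never uses completeness of the metric, but the implication ``finite entropy $\Rightarrow$ normal'' is simply false without completeness: the paper itself points out the example $u(x)=-|x|^2$, for which $(-\Delta)^{n/2}u\equiv 0$ (so the total Q-curvature is finite, indeed zero), the volume is finite so $\tau(g)=0$, and yet $u$ is not normal. The concrete step that fails is your claim that a nonconstant polyharmonic polynomial $h$ must satisfy $h(x)\geq c|x|^d$ on a cone of positive solid angle; polynomials such as $h(x)=-|x|^2$ (or $-x_1^2$) are bounded above and tend to $-\infty$, so $e^{nh}$ decays and produces no volume blow-up whatsoever. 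Completeness is exactly the missing ingredient: it is what rules out a remainder like $-|x|^2$ (which makes $e^{u}$ decay so fast that rays have finite length), and any correct proof of the converse must quantitatively combine a lower control coming from completeness with the upper control coming from $\tau(g)<\infty$ before the Liouville/Pizzetti step can conclude $h$ is constant.

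The forward direction is closer to the truth but also glosses over its hardest point. Jensen's inequality only gives $\fint_{\partial B_r(0)}e^{nu}\ud\sigma\geq e^{n\bar u(r)}$, i.e.\ the lower bound $\tau(g)\geq 1-\alpha_0$; the matching upper bound requires the Finn/Chang--Qing--Yang type estimate $\fint_{\partial B_r(0)}e^{ku}\ud\sigma=e^{k\bar u(r)+o(1)}$ (Lemma \ref{lem:e^ku=ek bar u} in this paper), which is proved by applying Jensen to the probability measure $Q^{\pm}e^{nu}\ud y/\|Q^{\pm}e^{nu}\|_{L^1}$ inside the exponential of the representation formula together with Lemma \ref{lem: mean value}; your sketch does not supply this. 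Two smaller points: the averaged asymptotic should be stated as $\bar u(r)=(-\alpha_0+o(1))\log r$ rather than with an $O(1)$ error (the term $\int\log|y|\,Qe^{nu}\ud y$ need not converge), which is still sufficient for computing $\tau(g)$; and the exclusion of $\alpha_0>1$ for complete normal metrics follows from Theorem 1.3 of \cite{CQY}, not from Theorem \ref{thm: CQY fang NX}, which assumes $R_g\geq 0$ near infinity and is not available here.
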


With the aid of Theorem \ref{thm: complete R_g geq 0} and Theorem \ref{thm: volume entropy}, we can attain volume growth control under the condition of scalar curvature decay control. For general Riemannian manifolds with non-negative Ricci curvature, certain volume growth controls under scalar curvature constraints have been obtained in \cite{CLS} and \cite{MW}.
\begin{theorem}\label{thm: volume growth under scalar curvature decay}
		Given a complete  and conformal metric $g=e^{2u}|dx|^2$ on $\mr^n$ where $n\geq 4$ is an even integer with finite total Q-curvature.  If the  scalar curvature $R_g(x)\geq C|x|^{-p}$ near infinity for some constant $C>0$ and $ p\geq 0$. Then the volume entropy $\tau(g)$ satisfies 
		$$\tau(g)\leq \min\{ 1, \frac{p}{2}\}.$$
\end{theorem}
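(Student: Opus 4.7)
The plan is to combine Theorem \ref{thm: complete R_g geq 0} with the volume entropy identity of Theorem \ref{thm: volume entropy}. Since the hypothesis $R_g(x)\geq C|x|^{-p}$ near infinity (with $C>0$ and $p\geq 0$) implies in particular that $R_g\geq 0$ near infinity, Part (2) of Theorem \ref{thm: complete R_g geq 0} applies and yields the lower bound
\[
	\int_{\mr^n}Qe^{nu}\ud x\geq \frac{(n-1)!|\mathbb{S}^n|}{2}\max\Bigl\{0,\,1-\tfrac{p}{2}\Bigr\}.
\]

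Next, I would invoke the fact, used implicitly in the statement of Theorem \ref{thm: complete R_g geq 0} itself, that a complete conformal metric on $\mr^n$ with finite total Q-curvature and non-negative scalar curvature near infinity is necessarily normal (this is the Chang--Qing--Yang/Fang/Ndiaye--Xiao criterion). Therefore Theorem \ref{thm: volume entropy} is applicable and gives the identity
\[
	\tau(g)=1-\frac{2}{(n-1)!|\mathbb{S}^n|}\int_{\mr^n}Qe^{nu}\ud x.
\]

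Substituting the lower bound on the Q-curvature integral into this identity gives
\[
	\tau(g)\leq 1-\max\Bigl\{0,\,1-\tfrac{p}{2}\Bigr\}=\min\Bigl\{1,\,\tfrac{p}{2}\Bigr\},
\]
which is the desired conclusion.

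The argument is really an assembly of previously established results, and the only conceptual point to verify carefully is that the hypothesis on $R_g$ both triggers the normality (so that Theorem \ref{thm: volume entropy} applies) and the sharp lower bound of Part (2) of Theorem \ref{thm: complete R_g geq 0}. Neither step should present difficulty, since both are already packaged in the cited theorems; the main obstacle, such as it is, is to note that the two bounds are complementary in the right direction so that their combination produces $\min\{1,p/2\}$ rather than a weaker estimate.
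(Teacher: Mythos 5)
Your proposal is correct and is essentially the paper's own argument: normality follows from the scalar curvature condition (Lemma \ref{lema: normal}), and then the lower bound of Part (2) of Theorem \ref{thm: complete R_g geq 0} is substituted into the entropy identity of Theorem \ref{thm: volume entropy} to give $\tau(g)\leq 1-\max\{0,1-p/2\}=\min\{1,p/2\}$.
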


  For dimensions $n\geq 4$, the finiteness  of $\tau(g)$ alone is insufficient to guarantee the normal metric when metric completeness is not assumed. For example, consider $u(x)=-|x|^2$.    Interestingly, as shown in \cite{CQY}, metric completeness is not used in establishing that the scalar curvature condition $R_g\geq 0$ near infinity implies  the normal metric.  This observation naturally leads to the following question: Can we identify a geometric quantity, analogous to volume entropy, that characterizes normal metrics without assuming metric completeness?
  
To address this question, we introduce a mass-type quantity for the conformal metric $g=e^{2u}|dx|^2$, defined as follows 
  \begin{equation}\label{conformal mass}
  	m_{c}(g):=\frac{1}{(n-1)(n-2)|\mathbb{S}^{n-1}|}\inf_{x\in \mr^n}\lim_{r\to\infty}\inf r^{3-n}\int_{\partial B_r(x)}R_ge^{2u}\ud \sigma
  \end{equation}
where $\ud \sigma$ is the area element on $\partial B_r(x)$ respect to Euclidean metric.
When $m_c(g)$ is well-defined, for any positive constant $C>0$, it exhibits the scaling invariant property:  $m_c(Cg)=m(g)$. Thus, we shall refer to this quantity as the “conformal mass". 

In fact, such a mass quantity is primarily motivated by the foundational work of the well-known positive mass theorem in mathematical physics. This area has garnered significant attention through numerous seminal contributions, including but not limited to \cite{SY CMP}, \cite{Witten}, \cite{Bart},  and many other pivotal studies.  For  an asymptotically flat manifold $(M,g)$, the classical definition of ADM mass \cite{ADM} is defined  as 
\begin{equation}\label{ADM def1}
	m_{ADM}(g):=\frac{1}{2(n-1)|\mathbb{S}^{n-1}|}\lim_{r\to\infty}\int_{\partial B_r}(g_{ij,j}-g_{jj,i})\nu^i\ud \sigma
\end{equation}
where $\nu$ denotes the outward unit normal vector. An alternative intrinsic formulation utilizing the Ricci tensor is presented in \cite{AH}, \cite{Chr}:
\begin{equation}\label{ADM def2}
	m_{ADM}(g)=\frac{1}{(n-1)(n-2)|\mathbb{S}^{n-1}|}\lim_{r\to\infty}\int_{\partial B_r}\left(Ric_g-\frac{1}{2}R_gg\right)(X,\nu_g)\ud \sigma_g
\end{equation}
where $X$ is the Euclidean conformal Killing vector field $x_i\frac{\partial}{\partial x_i}$. Readers interested in further details are referred to \cite{Her}, \cite{Miao-Tam}, and the comprehensive literature therein. Recent developments in the study of positive energy theorems have extended to the context of Q-curvature on asymptotically flat manifolds, as demonstrated in the works of \cite{ALL} and \cite{ALM}.

Using the conformal mass $m_c(g)$,  we can answer the previously raised question.
\begin{theorem}\label{thm: conformal mass}
	Given a conformal metric $g=e^{2u}|dx|^2$ on $\mr^n$ where $n\geq 4$ is an even integer with finite total Q-curvature. 
	\begin{enumerate}
		\item The metric $g$ is normal if and only if $m_{c}(g)>-\infty$. 
		\item  If the metric $g$ is normal, there holds
		$$m_{c}(g)=\frac{2}{(n-1)!|\mathbb{S}^n|}\int_{\mr^n}Qe^{nu}\ud x\cdot \left(2-\frac{2}{(n-1)!|\mathbb{S}^n|}\int_{\mr^n}Qe^{nu}\ud x\right).$$ 
	\end{enumerate}
\end{theorem}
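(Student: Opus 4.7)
The plan is to reduce to the standard decomposition $u = v + P$, where
\[
v(x) = \frac{2}{(n-1)!|\ms^n|}\int_{\mr^n}\log\frac{|y|}{|x-y|}Q(y)e^{nu(y)}\,\ud y + C
\]
is the normal solution produced by the integral representation \eqref{integral equation}, and $P$ is a polynomial satisfying $(-\Delta)^{n/2}P = 0$ (obtained from $Qe^{nu}\in L^1$ together with the standard higher-order PDE arguments used throughout the cited literature). By definition, $g$ is normal precisely when $P$ reduces to a constant. The starting point is the pointwise identity
\[
R_g e^{2u} = -2(n-1)\left(\Delta u + \tfrac{n-2}{2}|\nabla u|^2\right),
\]
which translates $m_{c}(g)$ into an asymptotic spherical average of $\Delta u$ and $|\nabla u|^2$ at infinity.

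For Part (2), set $\alpha_0 := \frac{2}{(n-1)!|\ms^n|}\int_{\mr^n}Qe^{nu}\,\ud x$. Differentiating the integral representation and using a standard split $\int_{\mr^n} = \int_{B_R} + \int_{\mr^n\setminus B_R}$ (with $R$ chosen so that the mass of $Qe^{nu}$ outside $B_R$ is small) gives the asymptotics
\[
\nabla v(x) = -\alpha_0\frac{x}{|x|^2} + o(|x|^{-1}), \qquad \Delta v(x) = -\alpha_0(n-2)|x|^{-2} + o(|x|^{-2}),
\]
with error bounds uniform enough to survive integration over $\partial B_r(x_0)$. In the normal case $u = v + \text{const}$, substituting into the identity for $R_g e^{2u}$ and integrating yields
\[
r^{3-n}\int_{\partial B_r(x_0)} R_g e^{2u}\,\ud\sigma \longrightarrow (n-1)(n-2)|\ms^{n-1}|\,\alpha_0(2-\alpha_0)
\]
as $r\to\infty$. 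The limit is independent of $x_0$, so dividing by the normalizing constant gives the claimed formula $m_c(g) = \alpha_0(2-\alpha_0)$, which is in particular finite.

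For the remaining implication in Part (1), I argue the contrapositive: if $P$ is non-constant, then $m_c(g) = -\infty$. The $v$-terms contribute a bounded quantity after the $r^{3-n}$ rescaling by the computation in Part (2), while the cross term $\nabla v \cdot \nabla P$ has vanishing spherical mean to leading order (odd-parity on $\ms^{n-1}$) and is lower order. Hence the asymptotic behavior is governed by $-2(n-1)\bigl(\Delta P + \tfrac{n-2}{2}|\nabla P|^2\bigr)$. When $\deg P = 1$ with $P(x) = a\cdot x + b$ and $a\neq 0$, we have $\Delta P = 0$ and $|\nabla P|^2 = |a|^2$, so for every $x_0$
\[
r^{3-n}\int_{\partial B_r(x_0)} R_g e^{2u}\,\ud\sigma \sim -(n-1)(n-2)|\ms^{n-1}||a|^2\,r^2 \longrightarrow -\infty.
\]
When $\deg P = d \geq 2$, the top homogeneous part $P_d$ forces $|\nabla P(x_0+r\omega)|^2 \sim r^{2d-2}|\nabla P_d(\omega)|^2$, whose spherical average is a positive constant independent of $x_0$; the quantity then behaves like a negative multiple of $r^{2d}$ and again diverges to $-\infty$. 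In either case the infimum over $x_0$ of the liminf is $-\infty$, as required.

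The main obstacle is making the asymptotic expansions of $\nabla v$ and $\Delta v$ quantitative enough: the naive $o(\cdot)$ errors must be uniform across the sphere $\partial B_r(x_0)$ so that they disappear after multiplying by $r^{3-n}$ and do not obscure the leading $\alpha_0(2-\alpha_0)$ constant. This is handled by the splitting described above, combined with the uniform expansion $\frac{x-y}{|x-y|^2} = \frac{x}{|x|^2} + O(|y|/|x|^2)$ valid on $B_R$ for $|x|\gg R$. The justification of the decomposition $u = v + P$ itself is by now a standard consequence of $Qe^{nu}\in L^1$, as exploited in the earlier theorems of the paper.
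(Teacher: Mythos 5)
Your reduction of Part (1) to a polynomial $P$ is the gap. From $Qe^{nu}\in L^1(\mr^n)$ alone you only obtain $u=v+h$ with $(-\Delta)^{n/2}h=0$; concluding that the polyharmonic remainder is a \emph{polynomial} requires some a priori upper or one-sided growth control on $u$ (completeness, $e^{nu}\in L^1$, an upper bound, etc.), none of which is assumed in this theorem. Entire polyharmonic functions need not be polynomials (already harmonic functions such as $e^{x_1}\cos x_2$ are not), so your contrapositive case analysis on $\deg P$ simply does not reach the general non-normal situation --- and ruling out precisely these non-polynomial remainders is the heart of the statement. The paper instead uses the hypothesis $m_c(g)>-\infty$ directly: integrating $\int_{\partial B_r(x)}R_ge^{2u}\,\ud\sigma\ge -Cr^{n-3}$ in $r$ gives $\int_{B_r(x)}\bigl(\Delta u+\tfrac{n-2}{2}|\nabla u|^2\bigr)\ud y\le Cr^{n-2}$, which combined with the potential estimate $\int_{B_r(x)}\bigl(|\Delta v|+|\nabla v|^2\bigr)\ud y\le Cr^{n-2}$ yields $\int_{B_r(x)}\bigl(\Delta h+\tfrac{n-2}{4}|\nabla h|^2\bigr)\ud y\le Cr^{n-2}$; then a Liouville-type result (Lemma \ref{lem: Delat h}, proved via Pizzetti's formula and induction on the order of polyharmonicity) shows any polyharmonic $h$ satisfying such an averaged one-sided bound is constant. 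That averaged inequality, not a polynomial structure theorem, is what $m_c(g)>-\infty$ actually delivers, and your proposal has no substitute for it.

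A secondary point: in Part (2) the claimed pointwise expansions $\nabla v(x)=-\alpha_0 x/|x|^2+o(|x|^{-1})$ and $\Delta v(x)=-\alpha_0(n-2)|x|^{-2}+o(|x|^{-2})$ are false in general when $Qe^{nu}$ is merely integrable (its mass may concentrate near a sequence of points escaping to infinity); only the spherical-average versions hold (Lemma \ref{lem: r^2 Dealtu}, Lemma \ref{lem:r partial u}). Moreover the limit of $r^2\fint_{\partial B_r(x)}|\nabla u|^2\ud\sigma$ cannot be obtained by squaring the average of $\nabla u$; it requires the separate double-integral splitting of Lemma \ref{lem:r^2nabla u^2}. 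Once those averaged lemmas are invoked, your computation of $m_c(g)=\alpha_0(2-\alpha_0)$, including the independence of the base point, coincides with the paper's argument.
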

\begin{remark}
	In the above theorem, we do not need to give any restriction on the upper bound for $m_c(g)$. In fact, using the precise representation  of $m_c(g)$, it is not hard to see that $m_c(g)\leq 1$ holds automatically.
\end{remark}
As an application,  we have the following rigidity theorem for conformal mass.
\begin{theorem}\label{thm:positive mass}
	Given a complete normal metric $g=e^{2u}|dx|^2$ on $\mr^n$ where $n\geq 4$ is an even integer with finite total Q-curvature. If the  Q-curvature satisfies  $Q\geq 0$,  then the conformal mass satisfies
	$$m_c(g)\geq 0$$
	with the  equality holds if and only if $u$ is a constant.
\end{theorem}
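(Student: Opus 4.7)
The plan is to reduce the statement to the explicit identity for $m_c(g)$ supplied by Theorem~\ref{thm: conformal mass}(2), and then to invoke the upper bound on the Q-curvature integral that is available for complete normal metrics. Writing
\[
\alpha := \frac{2}{(n-1)!|\ms^n|}\int_{\mr^n} Q e^{nu}\,\ud x,
\]
Theorem~\ref{thm: conformal mass}(2) rewrites the mass as $m_c(g)=\alpha(2-\alpha)$. Since $Q\geq 0$ pointwise and $e^{nu}>0$, we have $\alpha\geq 0$. On the other hand, since $g$ is complete and normal, the Chang-Qing-Yang argument (Theorem~1.3 in \cite{CQY}, recalled in the paragraph preceding Theorem~\ref{thm: CQY fang NX}) yields $\alpha\leq 1$. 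Combining these gives $0\leq\alpha\leq 1<2$, from which the inequality $m_c(g)=\alpha(2-\alpha)\geq 0$ is immediate.

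For the rigidity statement I would argue as follows. The vanishing $m_c(g)=0$ forces $\alpha(2-\alpha)=0$, and since $\alpha\leq 1$ this forces $\alpha=0$, that is, $\int_{\mr^n}Qe^{nu}\,\ud x=0$. Together with $Q\geq 0$ and $e^{nu}>0$, this forces $Q\equiv 0$. Substituting into the integral representation \eqref{integral equation} of the normal metric, the logarithmic potential term vanishes identically and one concludes $u\equiv C$ is constant. The converse direction is immediate: if $u$ is constant then $g$ is the Euclidean metric up to scaling, hence $R_g\equiv 0$ and $m_c(g)=0$ directly from \eqref{conformal mass}.

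The argument is essentially bookkeeping on top of Theorem~\ref{thm: conformal mass}, with the only genuine quantitative input being the bound $\alpha\leq 1$ for complete normal metrics. I do not anticipate any serious analytic obstacle; in particular, no additional hypothesis on the scalar curvature near infinity is needed, because $\alpha\leq 1$ is already a consequence of completeness and normality of $g$ alone.
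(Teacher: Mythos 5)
Your proposal is correct and follows essentially the same route as the paper: both rely on the bound $\int_{\mr^n}Qe^{nu}\,\ud x\leq \tfrac{(n-1)!|\ms^n|}{2}$ for complete normal metrics from Theorem 1.3 of \cite{CQY}, combine it with $Q\geq 0$ and the identity $m_c(g)=\alpha(2-\alpha)$ from Theorem \ref{thm: conformal mass}, and settle rigidity by forcing $Q\equiv 0$ and then reading $u\equiv C$ off the integral representation \eqref{integral equation}.
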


When imposing a constraint on the integral of scalar curvature, we also obtain similar control of the Q-curvature integral. We set the notations $\varphi^+=\max\{\varphi, 0\}$ and $\varphi^-=\max\{-\varphi,0\}$.

\begin{theorem}\label{thm: integral R_g}
	Given a  conformal metric $g=e^{2u}|dx|^2$ on $\mr^n$ where $n\geq 4$ is an even integer with finite total Q-curvature. \begin{enumerate}
		\item\label{thm:int R^-n/2} Suppose that  
		the  negative part of scalar curvature $R_g^-$  satisfies 
		\begin{equation}\label{equ: integral R_g^-}
			\int_{\mr^n}(R^-_g)^{\frac{n}{2}}e^{nu}\ud x<+\infty.
		\end{equation}
		Then there holds
		$$0\leq \int_{\mr^n}Qe^{nu}\ud x\leq (n-1)!|\mathbb{S}^n|.$$
		\item \label{thm:int R^n/2}Suppose that   scalar curvature $R_g$  satisfies 
		\begin{equation}\label{equ: integral R_g}
			\int_{\mr^n}|R_g|^{\frac{n}{2}}e^{nu}\ud x<+\infty.
		\end{equation}
		Then there holds
		$$ \int_{\mr^n}Qe^{nu}\ud x=0 \quad \mathrm{or}\quad \int_{\mr^n}Qe^{nu}\ud x= (n-1)!|\mathbb{S}^n|.$$
		Moreover, if the volume is finite  i.e. $e^{nu}\in L^1(\mr^n)$  in addition, then we have 
		 $$\int_{\mr^n}Qe^{nu}\ud x= (n-1)!|\mathbb{S}^n|.$$
	\end{enumerate}
\end{theorem}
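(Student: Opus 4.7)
The plan is to first establish normality of $u$ under each hypothesis, then extract the sharp asymptotic behavior of $u$ at infinity from the integral representation \eqref{integral equation}, and finally read off the required constraint on $\alpha:=\tfrac{2}{(n-1)!|\mathbb{S}^n|}\int_{\mr^n}Qe^{nu}\ud x$ from the leading-order behavior of $R_g$.

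The starting observation is the identity
$$R_g^-=2(n-1)e^{-2u}\bigl(\Delta u+\tfrac{n-2}{2}|\nabla u|^2\bigr)^+,$$
so \eqref{equ: integral R_g^-} is equivalent to the purely Euclidean $L^{n/2}$ bound
$$\int_{\mr^n}\bigl[\bigl(\Delta u+\tfrac{n-2}{2}|\nabla u|^2\bigr)^+\bigr]^{n/2}\ud x<\infty,$$
with the analogous equivalence for \eqref{equ: integral R_g}. To establish normality, I would decompose $u=v+w$ with $v$ given by the right-hand side of \eqref{integral equation}, so that $(-\Delta)^{n/2}w=0$, and invoke the Liouville-type classification of \cite{Lin, Mar MZ} to reduce to the case where $w$ is a polynomial. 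Suppose for contradiction that $w$ has degree $k\geq 1$; since $v\sim -\alpha\log|x|$, the contributions of $v$ to $|\nabla u|^2$ and $\Delta u$ are $O(|x|^{-2})$, while $|\nabla w|^2\sim|x|^{2k-2}$ dominates $\Delta w$ at infinity, so
$$\bigl(\Delta u+\tfrac{n-2}{2}|\nabla u|^2\bigr)^+\sim \tfrac{n-2}{2}|\nabla w|^2>0\quad\text{a.e.\ near infinity.}$$
But $\int_{\mr^n}|\nabla w|^n\ud x=+\infty$ for any non-constant polynomial $w$ (by a dyadic-annulus scaling on the leading homogeneous part), contradicting the $L^{n/2}$ bound; hence $w$ is constant and $u$ is normal.

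With normality in hand, differentiating \eqref{integral equation} and passing to the limit by dominated convergence against the finite measure $Qe^{nu}\ud y$ yields the pointwise asymptotics $\nabla u(x)\sim -\alpha x/|x|^2$ and $\Delta u(x)\sim -(n-2)\alpha/|x|^2$ as $|x|\to\infty$. Substituting into $R_g=-2(n-1)e^{-2u}(\Delta u+\tfrac{n-2}{2}|\nabla u|^2)$ gives
$$R_g(x)\sim -(n-1)(n-2)\,\alpha(\alpha-2)\,|x|^{2\alpha-2},$$
so $(R_g^-)^{n/2}e^{nu}(x)\sim c_\alpha|x|^{-n}$ with $c_\alpha=[(n-1)(n-2)|\alpha(\alpha-2)|]^{n/2}\cdot\mathbf{1}_{\{\alpha(\alpha-2)>0\}}$, and analogously for $|R_g|^{n/2}e^{nu}$ with indicator $\mathbf{1}_{\{\alpha(\alpha-2)\neq 0\}}$. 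The non-integrability of $|x|^{-n}$ at infinity in $\mr^n$ then forces $\alpha\in[0,2]$ under \eqref{equ: integral R_g^-}, proving Part \eqref{thm:int R^-n/2}, and $\alpha\in\{0,2\}$ under \eqref{equ: integral R_g}. For the final assertion, if $\alpha=0$ then $u=O(1)$ at infinity so $\int_{\mr^n}e^{nu}\ud x=+\infty$; combined with $e^{nu}\in L^1(\mr^n)$ this rules out $\alpha=0$ and yields $\alpha=2$.

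The main obstacle is securing the polynomial structure of $w$ in the normality step: in the pointwise setting $R_g\geq 0$ of Theorem \ref{thm: R_g geq 0} this is ensured by the superharmonicity of $e^{(n-2)u/2}$ near infinity, but under the present $L^{n/2}$ hypothesis a more delicate argument is needed—likely combining spherical averaging with the integral bound on $(\Delta u+\tfrac{n-2}{2}|\nabla u|^2)^+$ to produce a polynomial growth estimate on $u$. Once the polynomial form of $w$ is in hand, the contradiction via $\int|\nabla w|^n\ud x=\infty$ is immediate, and the remaining asymptotic analysis is routine.
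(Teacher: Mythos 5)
Your proposal follows the right global outline (normality, then asymptotics forcing $\alpha_0\in[0,2]$ resp.\ $\{0,2\}$, then Jensen-type volume considerations), but the central analytic step is not valid as stated. You claim that differentiating \eqref{integral equation} and ``passing to the limit by dominated convergence'' yields the \emph{pointwise} asymptotics $\nabla u(x)\sim-\alpha_0 x/|x|^2$, $\Delta u(x)\sim-(n-2)\alpha_0/|x|^2$ and $e^{nu}(x)\sim|x|^{-n\alpha_0}$, hence $R_g(x)\sim-(n-1)(n-2)\alpha_0(\alpha_0-2)|x|^{2\alpha_0-2}$ and $(R_g^\mp)^{n/2}e^{nu}\sim c|x|^{-n}$. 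Dominated convergence does not apply: the kernels $|x|/|x-y|$ and $|x|^2/|x-y|^2$ are unbounded near $y=x$, and the hypotheses only give $Qe^{nu}\in L^1$, so mass of $Qe^{nu}$ concentrating near points going to infinity can make $\nabla u$, $\Delta u$ and $u-(-\alpha_0\log|x|)$ arbitrarily large along sequences; indeed the representation behind Lemma \ref{lem:bar u} carries the extra term $\int_{B_1(x)}\log\frac{1}{|x-y|}Qe^{nu}\,dy$, which need not vanish pointwise. For normal solutions only the \emph{spherical averages} of these quantities have the asserted behavior — this is exactly the content of Lemmas \ref{lem: r^2 Dealtu}, \ref{lem:r^2nabla u^2}, \ref{lem:bar u}, \ref{lem:e^ku=ek bar u} — and the paper then closes the argument differently: it writes $\frac{r^2}{2(n-1)}\fint_{\partial B_r(0)}R_ge^{2u}\,d\sigma$ via \eqref{scalar curvature def}, bounds it through H\"older's inequality by $C\bigl(r\int_{\partial B_r(0)}(R_g^\mp)^{n/2}e^{nu}\,d\sigma\bigr)^{2/n}$, and chooses radii $r_i\to\infty$ along which this boundary term tends to $0$ (such $r_i$ exist precisely because $(R_g^\mp)^{n/2}e^{nu}\in L^1(\mr^n)$), obtaining $(n-2)\alpha_0-\frac{n-2}{2}\alpha_0^2\geq 0$ (resp.\ $\leq 0$). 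Your pointwise route skips this and the contradiction ``$|x|^{-n}$ is non-integrable'' is therefore not justified.

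Two further points. First, your normality step is incomplete in exactly the place you flag: an entire function with $(-\Delta)^{n/2}w=0$ need not be a polynomial (e.g.\ $e^{x_1}\cos x_2$ is harmonic), and the Liouville/classification results of Lin and Martinazzi do not give polynomial structure for an arbitrary polyharmonic remainder without a growth estimate — obtaining that estimate from the $L^{n/2}$ bound on $R_g^-$ is the substance of the criterion, which the paper simply quotes as Lemma \ref{lema: normal} (Chang–Qing–Yang, Wang–Wang); you should cite it rather than leave this open. Second, your last step ``if $\alpha_0=0$ then $u=O(1)$ at infinity'' is likewise a pointwise claim that does not follow; the correct (and simple) argument, used in the paper as \eqref{alpha_0 geq 1}, is Jensen's inequality on spheres together with $\bar u(r)=(-\alpha_0+o(1))\log r$, which shows $\int_{B_R(0)}e^{nu}\,dx\gtrsim\int^R r^{n-1-n\alpha_0+o(1)}\,dr$, so finite volume forces $\alpha_0\geq 1$ and hence $\alpha_0=2$.
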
  

\begin{remark}
		Part \eqref{thm:int R^-n/2}  in Theorem \ref{thm: integral R_g} is a generalization  of  Theorem \ref{thm: R_g geq 0}.    Part \eqref{thm:int R^n/2} is also obtained  by  Lu-Wang \cite{Lu-Wang}, Wang-Zhou \cite{Wang-Zhou},  and Li-Wang \cite{Li-Wang} with the help of very different methods.
\end{remark}

Now, let us briefly outline the structure of this paper. In Section \ref{section:  estimate for normal }, we establish some crucial  estimates for normal solutions. Leveraging these estimates, we proceed to prove Theorem \ref{thm: complete R_g geq 0},  Theorem \ref{thm: R_g geq 0}, Theorem \ref{thm: Volume comparison Q geq n-1!}, Theorem \ref{thm:scalar curvature bouended}, Theorem \ref{thm: Volume comparison Q leq n-1!},  Theorem \ref{thm: volume growth under scalar curvature decay}, and  Theorem \ref{thm: integral R_g} in Section \ref{section: scalar curvature and Q-curvature integral}.
Later, in Section \ref{sec: simple ends}, we study the complete four-dimensional manifolds with simple ends and provide the proof of Theorem \ref{thm: chim(M) R_g geq 0}.  In Section \ref{sec: conformal mass}, making use of conformal mass, we  establish   the sufficient and necessary condition for normal metric  and prove Theorem \ref{thm: conformal mass}. As an  application, we prove positive mass type Theorem \ref{thm:positive mass}.
\hspace{4em}

{\bf Acknowledgment.} The author would like to thank Professor Xingwang Xu for helpful discussions.

\section{Some estimates for normal solutions }\label{section:  estimate for normal }

For brevity, set the notation $\alpha_0$ as follows
$$\alpha_0:=\frac{2}{(n-1)!|\mathbb{S}^n|}\int_{\mr^n}Q(y)e^{nu(y)}\ud y.$$
Given  a measurable set $E$, the average integral for function $\varphi$ over $E$  is defined  as
$$\fint_E\varphi\ud \mu:=\frac{1}{|E|}\int_E\varphi\ud \mu.$$
For $r>0$, the notation $\bar u(r)$ is defined by
$$\bar u(r):=\frac{1}{|\partial B_r(0)|}\int_{\partial B_r(0)}u(x)\ud \sigma(x).$$

The  following lemma is easy but useful in our following   proofs. 
\begin{lemma}\label{lem: mean value}
	For any integer $n\geq 3$, $0< k\leq n-2$ and $r>0,|y|>0$, there holds
	$$\fint_{\partial B_r(0)}\left(\frac{|y|}{|x-y|}\right)^k\ud\sigma(x)\leq 1.$$
\end{lemma}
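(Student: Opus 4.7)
My strategy is to recast this as a statement about superharmonicity. Dropping the $|y|^k$ factor, the inequality is equivalent to $\fint_{\partial B_r(0)} |x-y|^{-k}\,\ud\sigma(x) \leq |y|^{-k}$, so I would first verify that the function $h(x):=|x-y|^{-k}$ is superharmonic on $\mr^n\setminus\{y\}$ (and harmonic in the borderline case $k=n-2$). A direct radial computation using $\Delta f = f''(\rho)+\frac{n-1}{\rho}f'(\rho)$ with $f(\rho)=\rho^{-k}$ yields
$$\Delta_x |x-y|^{-k} = -k(n-k-2)|x-y|^{-k-2},$$
which is non-positive precisely when $0<k\leq n-2$.

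Next I would split according to where $y$ sits relative to $\overline{B_r(0)}$. When $|y|>r$, $h$ is smooth and superharmonic on an open neighborhood of $\overline{B_r(0)}$, so the classical mean value inequality for superharmonic functions applied at the center gives $|y|^{-k}=h(0)\geq \fint_{\partial B_r(0)} h\,\ud\sigma$, which is exactly the desired bound. The boundary case $|y|=r$ can then be obtained by approximating $y$ from outside by points $y_j$ with $|y_j|>r$ and $y_j\to y$: since $k\leq n-2<n-1$ is strictly below the integrability threshold for radial singularities on an $(n-1)$-dimensional sphere, the spherical integrals pass cleanly to the limit.

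For the remaining case $|y|<r$, I plan to use the Kelvin transform. Setting $y':=(r^2/|y|^2)\,y$, which lies strictly outside $\overline{B_r(0)}$, a short expansion of $|x-y'|^2$ for $|x|=r$ yields the classical sphere identity $|x-y|=(|y|/r)|x-y'|$. Substituting and applying the previous case to $y'$ gives
$$\fint_{\partial B_r(0)}|x-y|^{-k}\,\ud\sigma = \left(\frac{r}{|y|}\right)^k \fint_{\partial B_r(0)}|x-y'|^{-k}\,\ud\sigma \leq \left(\frac{r}{|y|}\right)^k |y'|^{-k}=r^{-k},$$
and since $|y|\leq r$ forces $r^{-k}\leq |y|^{-k}$, the desired bound follows after multiplying by $|y|^k$.

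I do not anticipate a genuine obstacle here: the argument amounts to two applications of the mean value inequality joined by Kelvin inversion, and the range $0<k\leq n-2$ is precisely what makes both the superharmonicity of $|x-y|^{-k}$ and the integrability on the sphere work out simultaneously. The mildest technical point is the limiting step at $|y|=r$, which is harmless for the same integrability reason.
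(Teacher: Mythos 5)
Your argument is correct, but it follows a different route from the paper. The paper first uses H\"older's (Jensen's) inequality to reduce the whole range $0<k\leq n-2$ to the single endpoint $k=n-2$, and then quotes the Lieb--Loss identity $\fint_{\partial B_r(0)}|x-y|^{2-n}\,\ud\sigma(x)=\min\{r^{2-n},|y|^{2-n}\}$, which is itself a consequence of the mean value property of the harmonic function $|x-y|^{2-n}$; the lemma follows at once since the minimum is at most $|y|^{2-n}$. You instead treat every $k$ in the admissible range directly: the computation $\Delta_x|x-y|^{-k}=-k(n-k-2)|x-y|^{-k-2}\leq 0$ gives superharmonicity exactly when $0<k\leq n-2$, the mean value inequality at the center handles $|y|>r$, the sphere identity $|x-y|=\frac{|y|}{r}|x-y'|$ with $y'=\frac{r^2}{|y|^2}y$ (Kelvin inversion) transfers this to $|y|<r$, and Fatou-type passage to the limit (legitimate since $k\leq n-2<n-1$ keeps the spherical integral finite) covers $|y|=r$. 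Each step checks out, including the final bookkeeping $(r/|y|)^k|y'|^{-k}=r^{-k}\leq|y|^{-k}$ wait---more precisely $|y|^k r^{-k}\leq 1$ for $|y|\leq r$, which is what the lemma asks. What each approach buys: the paper's proof is shorter because it interpolates and outsources the endpoint case to a known formula; yours is self-contained, avoids both the H\"older reduction and the external reference, and in fact proves the slightly stronger statement $\fint_{\partial B_r(0)}|x-y|^{-k}\,\ud\sigma(x)\leq\min\{r^{-k},|y|^{-k}\}$ for all $0<k\leq n-2$, of which the Lieb--Loss identity is the sharp case $k=n-2$.
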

\begin{proof}
	By H\"older's inequality, one has
	$$\fint_{\partial B_r(0)}\left(\frac{|y|}{|x-y|}\right)^k\ud\sigma(x)\leq \left(\fint_{\partial B_r(0)}\left(\frac{|y|}{|x-y|}\right)^{n-2}\ud\sigma(x)\right)^{\frac{k}{n-2}}.$$
	Thus,  we only need to prove the case $k=n-2$.
	For $n\geq 3$, $|x-y|^{2-n}$ is the Green's function for Laplacian operator. With help of mean value property of harmonic functions, we obtain the formula
	\begin{equation}\label{LL identity}
		\fint_{\partial B_r(0)}\frac{1}{|x-y|^{n-2}}\ud\sigma(x)=\min\{r^{2-n},|y|^{2-n}\}
	\end{equation}
	which has been established in Page 249 of   \cite{Lieb-Loss} by  Lieb and Loss. Using such formula, one has 
	$$\fint_{\partial B_r(0)}\left(\frac{|y|}{|x-y|}\right)^{n-2}\ud\sigma(x)\leq 1.$$ Thus,  we finish the proof.
\end{proof}

\begin{lemma}\label{lem: r^2 Dealtu}
	As $r\to\infty$,  the normal solutions $u(x)$ to \eqref{integral equation} satisfies
	$$r^2\cdot \fint_{\partial B_r(0)}(-\Delta )u\ud \sigma\to(n-2)\alpha_0.$$
\end{lemma}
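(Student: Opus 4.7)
The plan is to differentiate the integral representation \eqref{integral equation} under the integral sign to get an explicit expression for $-\Delta u$, interchange the spherical average with the integral over $y$, and then apply dominated convergence with a uniform bound supplied by Lemma \ref{lem: mean value}.

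First I would compute $-\Delta u$ from \eqref{integral equation}. Since $\log|y|$ contributes only a constant and $\Delta_x\log|x-y|=(n-2)|x-y|^{-2}$ for $x\neq y$ when $n\geq 3$, differentiating under the integral (justified because $|x-y|^{-2}$ is locally integrable in dimension $n\geq 4$ and $Qe^{nu}\in L^1(\mr^n)$) yields
$$-\Delta u(x)=\frac{2(n-2)}{(n-1)!|\mathbb{S}^n|}\int_{\mr^n}\frac{Q(y)e^{nu(y)}}{|x-y|^2}\ud y.$$
Taking the spherical average over $\partial B_r(0)$ and applying Fubini gives
$$r^2\cdot\fint_{\partial B_r(0)}(-\Delta u)\ud\sigma=\frac{2(n-2)}{(n-1)!|\mathbb{S}^n|}\int_{\mr^n}Q(y)e^{nu(y)}\cdot\left(r^2\fint_{\partial B_r(0)}\frac{\ud\sigma(x)}{|x-y|^2}\right)\ud y.$$

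Next I would analyze the inner factor $F_r(y):=r^2\fint_{\partial B_r(0)}|x-y|^{-2}\ud\sigma(x)$. For fixed $y$, writing $|x-y|^2=r^2-2x\cdot y+|y|^2$ shows $F_r(y)\to 1$ as $r\to\infty$ pointwise. The main technical step is the uniform bound. Lemma \ref{lem: mean value} applied with $k=2$ (valid because $n\geq 4$ implies $2\leq n-2$) gives $\fint_{\partial B_r(0)}|x-y|^{-2}\ud\sigma(x)\leq|y|^{-2}$, hence $F_r(y)\leq r^2/|y|^2$, which is useful when $|y|$ is large relative to $r$. When $|y|\leq r/2$, the elementary estimate $|x-y|\geq r-|y|\geq r/2$ for $x\in\partial B_r(0)$ gives $F_r(y)\leq 4$. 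Combining the two cases yields $F_r(y)\leq 4$ uniformly in $r>0$ and $y\in\mr^n\setminus\{0\}$.

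With the uniform bound and the pointwise limit $F_r(y)\to 1$, the dominated convergence theorem (using $Qe^{nu}\in L^1$ as the integrable envelope, multiplied by the constant $4$) yields
$$r^2\cdot\fint_{\partial B_r(0)}(-\Delta u)\ud\sigma\longrightarrow\frac{2(n-2)}{(n-1)!|\mathbb{S}^n|}\int_{\mr^n}Q(y)e^{nu(y)}\ud y=(n-2)\alpha_0,$$
which is the claim. I expect the only genuinely delicate point to be the uniform bound step; once Lemma \ref{lem: mean value} is combined with the elementary near-origin estimate, everything else is routine.
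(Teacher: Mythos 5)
Your argument is correct, and it begins exactly where the paper does: both use the kernel identity $-\Delta u(x)=\frac{2(n-2)}{(n-1)!|\mathbb{S}^n|}\int_{\mr^n}|x-y|^{-2}Q(y)e^{nu(y)}\ud y$ (the paper's first display presupposes it) and both invoke Lemma \ref{lem: mean value} with $k=2$. Where you diverge is in how the limit is taken. The paper fixes $\epsilon>0$, splits the source $Qe^{nu}$ into the part on $B_{R_\epsilon}(0)$ (handled by dominated convergence) and the small tail (handled by Lemma \ref{lem: mean value} after rewriting $|x|^2-|x-y|^2=|y|^2-2y\cdot(y-x)$), and then lets $\epsilon\to 0$. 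You instead observe the uniform bound $r^2\fint_{\partial B_r(0)}|x-y|^{-2}\ud\sigma(x)\leq 4$ for all $r>0$ and $y\neq 0$ — combining the lemma's bound $r^2/|y|^2$ for $|y|>r/2$ with the elementary estimate $|x-y|\geq r/2$ for $|y|\leq r/2$ — and then apply dominated convergence in a single stroke, with $4|Q|e^{nu}$ as the envelope and the easy pointwise limit $F_r(y)\to 1$. This is a genuine streamlining: it removes the $\epsilon$-splitting and the algebraic decomposition entirely, and every step checks out (the lemma's bound is valid even when $y$ is near $\partial B_r(0)$ since $n-1\geq 3$, so the singular average is finite). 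The only thing the paper's longer route buys is the auxiliary estimate \eqref{second term}, which is extracted as a by-product and reused in the proof of Lemma \ref{lem:r partial u}; your shortcut does not produce it, but it is not needed for the statement at hand, and an analogous direct argument would recover it if desired.
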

\begin{proof}
	Firstly, since $Qe^{nu}\in L^1(\mr^n)$, for any $\epsilon>0$, there exists $R_\epsilon>0$ such that 
	\begin{equation}\label{small integral curvature}
		\int_{\mr^n\backslash B_{R_\epsilon}(0)}|Q|e^{nu}\ud y<\epsilon.
	\end{equation}
	With the help of Fubini's theorem, a direct computation yields that 
	\begin{align*}
		&r^2\cdot \fint_{\partial B_r(0)}(-\Delta) u\ud \sigma-(n-2)\alpha_0\\
		=&\frac{2(n-2)}{(n-1)!|\mathbb{S}^n|}\int_{\mr^n}Q(y)e^{nu(y)}\fint_{\partial B_r(0)}\frac{|x|^2}{|x-y|^2}\ud \sigma(x)\ud y\\
		&-\frac{2(n-2)}{(n-1)!|\mathbb{S}^n|}\int_{\mr^n}Q(y)e^{nu(y)}\ud y\\
		=&\frac{2(n-2)}{(n-1)!|\mathbb{S}^n|}\int_{\mr^n}Q(y)e^{nu(y)}\fint_{\partial B_r(0)}\frac{|x|^2-|x-y|^2}{|x-y|^2}\ud \sigma(x)\ud y\\
		=&\frac{2(n-2)}{(n-1)!|\mathbb{S}^n|}\int_{\mr^n}Q(y)e^{nu(y)}\fint_{\partial B_r(0)}\frac{|y|^2-2y\cdot(y-x)}{|x-y|^2}\ud \sigma(x)\ud y\\
		=&\frac{2(n-2)}{(n-1)!|\mathbb{S}^n|}\int_{\mr^n}Q(y)e^{nu(y)}\fint_{\partial B_r(0)}\frac{|y|^2}{|x-y|^2}\ud \sigma(x)\ud y\\
		&-\frac{4(n-2)}{(n-1)!|\mathbb{S}^n|}\fint_{\partial B_r(0)}\int_{\mr^n}Q(y)e^{nu(y)}\frac{y\cdot(y-x)}{|x-y|^2}\ud y\ud \sigma(x).
	\end{align*}
	We  deal with these two terms one by one. Firstly, we split the second term into two parts as follows
	\begin{align*}
		&\fint_{\partial B_r(0)}\int_{\mr^n}\frac{y\cdot(y-x)}{|x-y|^2}Q(y)e^{nu(y)}\ud y\ud\sigma(x)\\
		=&\fint_{\partial B_r(0)}\int_{B_{R_\epsilon}(0)}\frac{y\cdot(y-x)}{|x-y|^2}Q(y)e^{nu(y)}\ud y\ud\sigma(x)\\
		&+\fint_{\partial B_r(0)}\int_{\mr^n\backslash B_{R_\epsilon}(0)}\frac{y\cdot(y-x)}{|x-y|^2}Q(y)e^{nu(y)}\ud y\ud\sigma(x)\\
		=:&II_1(r)+II_2(r).
	\end{align*}
Making use of Fubini's theorem, there holds
$$
		|II_2(r)|\leq C\int_{\mr^n\backslash B_{R_\epsilon}(0)}|Q(y)|e^{nu(y)}\fint_{\partial B_r(0)}\frac{|y|}{|x-y|}\ud\sigma(x)\ud y.
$$
	Applying Lemma \ref{lem: mean value} and the estimate \eqref{small integral curvature}, one has
	\begin{equation}\label{II_2}
		|II_2(r)|\leq C\epsilon.
	\end{equation}
	With the help of dominated convergence theorem,  we have
	\begin{equation}\label{II_1}
		II_1(r)\to 0, \quad \mathrm{as}\quad  r\to\infty.
	\end{equation}
Then,  due to the arbitrary choice of $\epsilon$,  one has 
	\begin{equation}\label{second term}
		\fint_{\partial B_r(0)}\int_{\mr^n}\frac{y\cdot(y-x)}{|x-y|^2}Q(y)e^{nu(y)}\ud y\ud\sigma(x)\to 0
	\end{equation}
	as $r\to\infty.$
	
		As for the first term, we also  split it into two terms as before 
	\begin{align*}
		&\int_{\mr^n}Q(y)e^{nu(y)}\fint_{\partial B_r(0)}\frac{|y|^2}{|x-y|^2}\ud \sigma(x)\ud y\\
		=&\int_{B_{R_\epsilon}(0)}Q(y)e^{nu(y)}\fint_{\partial B_r(0)}\frac{|y|^2}{|x-y|^2}\ud \sigma(x)\ud y\\
		&+\int_{\mr^n\backslash B_{R_\epsilon}(0)}Q(y)e^{nu(y)}\fint_{\partial B_r(0)}\frac{|y|^2}{|x-y|^2}\ud \sigma(x)\ud y\\
		=&:II_{3}(r)+II_4(r).
	\end{align*}
	The dominated convergence theorem yields that 
	$$II_3(r)\to 0, \;\mathrm{as}\quad r\to\infty.$$
With the help of Lemma \ref{lem: mean value} and \eqref{small integral curvature}, one has
	$$|II_4(r)|\leq C\epsilon.$$
	Combining these estimates with  the arbitrary choice of $\epsilon$, there holds
	$$r^2\cdot \fint_{\partial B_r(0)}(-\Delta) u\ud \sigma\to(n-2)\alpha_0, \quad \mathrm{as}\quad r\to\infty.$$
\end{proof}
\begin{lemma}\label{lem:r partial u}
	As $r\to\infty$, the normal solutions $u(x)$ to \eqref{integral equation} satisfies
	\begin{equation}\label{r partial u+alapha_0}
	r	\cdot\bar u'(r) \to-\alpha_0.
	\end{equation}
\end{lemma}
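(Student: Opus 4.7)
The plan is to express $r \cdot \bar u'(r)$ as a weighted integral of $-\Delta u$ over the ball $B_r(0)$, and then combine Lemma \ref{lem: r^2 Dealtu} with a Cesàro-type limit argument.

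Since $u$ is smooth, I would first differentiate the spherical mean under the integral sign and apply the divergence theorem to obtain
$$\bar u'(r) = \fint_{\partial B_r(0)} \partial_\nu u \, \ud \sigma = \frac{1}{|\mathbb{S}^{n-1}|\, r^{n-1}} \int_{B_r(0)} \Delta u \, \ud x.$$
Setting $h(s) := \fint_{\partial B_s(0)}(-\Delta u)\, \ud\sigma$ and writing the ball integral in polar coordinates, this becomes
$$r \cdot \bar u'(r) = -\frac{1}{r^{n-2}} \int_0^r s^{n-3}\bigl[s^2 h(s)\bigr] \, \ud s.$$
The weight $s^{n-3}$ is locally integrable near $s = 0$ because $n \geq 4$, and $s^2 h(s)$ is continuous and bounded on compact sets thanks to the smoothness of $u$, so this integral is well-defined for every $r > 0$.

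By Lemma \ref{lem: r^2 Dealtu}, $s^2 h(s) \to (n-2)\alpha_0$ as $s \to \infty$. I would then show that
$$\frac{1}{r^{n-2}} \int_0^r s^{n-3}\bigl[s^2 h(s)\bigr] \, \ud s \longrightarrow \frac{(n-2)\alpha_0}{n-2} = \alpha_0$$
by a standard $\varepsilon$-$R$ splitting: given $\varepsilon > 0$, choose $R_\varepsilon$ with $|s^2 h(s) - (n-2)\alpha_0| < \varepsilon$ for $s > R_\varepsilon$; then the portion of the integral over $[0, R_\varepsilon]$, scaled by $r^{-(n-2)}$, tends to $0$ as $r \to \infty$, the contribution of the constant $(n-2)\alpha_0$ over $[R_\varepsilon, r]$ divided by $r^{n-2}$ converges to $\alpha_0$, and the remaining error is bounded by $\varepsilon/(n-2)$. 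Letting $\varepsilon \to 0$ gives $r \cdot \bar u'(r) \to -\alpha_0$, as desired.

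The main obstacle, such as it is, lies in the Cesàro-type passage to the limit and in justifying the interchange of the radial derivative with the spherical mean. Both are routine given the smoothness of $u$ and the explicit weight $s^{n-3}$; no new analytic input beyond Lemma \ref{lem: r^2 Dealtu} should be required.
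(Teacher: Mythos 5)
Your proposal is correct, but it proceeds along a genuinely different route from the paper. The paper never passes through $\int_{B_r(0)}\Delta u\,\ud x$: it differentiates the integral representation \eqref{integral equation} directly to get
\begin{equation*}
x\cdot\nabla u(x)=-\alpha_0+\frac{2}{(n-1)!|\mathbb{S}^n|}\int_{\mr^n}\frac{y\cdot(y-x)}{|x-y|^2}Q(y)e^{nu(y)}\,\ud y,
\end{equation*}
and then quotes the spherical-average estimate \eqref{second term}, already established inside the proof of Lemma \ref{lem: r^2 Dealtu}, to conclude that the remainder term averages to zero on $\partial B_r(0)$, giving $r\,\bar u'(r)=\fint_{\partial B_r(0)}x\cdot\nabla u\,\ud\sigma\to-\alpha_0$ at once. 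You instead use only the \emph{statement} of Lemma \ref{lem: r^2 Dealtu}: writing $\bar u'(r)$ via the divergence theorem as an average of $\Delta u$ over the ball and then running a Ces\`aro-type $\varepsilon$--$R$ splitting on $\frac{1}{r^{n-2}}\int_0^r s^{n-3}\bigl[s^2 h(s)\bigr]\ud s$ with $s^2h(s)\to(n-2)\alpha_0$. Your computation and signs check out, the weight $s^{n-3}$ and the smoothness of $u$ (assumed throughout the paper) justify the integral representation and the splitting, so the argument is complete. What your approach buys is modularity — it treats Lemma \ref{lem: r^2 Dealtu} as a black box and needs no access to the kernel manipulations or to the intermediate estimate \eqref{second term}; what the paper's approach buys is immediacy — the limit drops out in two lines because the needed cancellation was already isolated in the previous proof, and the same remainder estimate is reused later (e.g.\ in Lemma \ref{lem:r^2nabla u^2}-type arguments), so no averaging-in-$r$ step is required.
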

\begin{proof}
A	direct computation yields that 
\begin{align*}
	&x\cdot \nabla u(x)\\
	=&-\frac{2}{(n-1)!|\mathbb{S}^n|}\int_{\mr^n}\frac{x\cdot(x-y)}{|x-y|^2}Q(y)e^{nu(y)}\ud y\\
	=&-\alpha_0+\frac{2}{(n-1)!|\mathbb{S}^n|}\int_{\mr^n}\frac{y\cdot(y-x)}{|x-y|^2}Q(y)e^{nu(y)}\ud y.
\end{align*}
Then, with the help of the estimate \eqref{second term}, one has 
$$
	r\cdot \bar u'(r) =	\fint_{\partial B_r(0)}r\cdot \frac{\partial u}{\partial r}\ud \sigma=	\fint_{\partial B_r(0)}x\cdot \nabla u\ud \sigma \to-\alpha_0
$$
as $r\to\infty.$
\end{proof}

\begin{lemma}\label{lem:r^2nabla u^2}
	As $r\to\infty$,	the normal solutions $u(x)$ to \eqref{integral equation} satisfies
	$$r^2\fint_{\partial B_r(0)}|\nabla u|^2\ud\sigma\to \alpha_0^2.$$
\end{lemma}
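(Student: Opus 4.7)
The plan is to mimic the strategy used for the previous two lemmas by expressing $|\nabla u|^2$ as a double integral against $Q(y)Q(z)e^{n(u(y)+u(z))}$ and extracting the behavior of a geometric kernel on $\partial B_r(0)$. Differentiating the integral equation \eqref{integral equation} gives
\begin{equation*}
\nabla u(x)=-\frac{2}{(n-1)!|\mathbb{S}^n|}\int_{\mr^n}\frac{x-y}{|x-y|^2}\,Q(y)e^{nu(y)}\,\ud y,
\end{equation*}
so that $|\nabla u(x)|^2$ is a double integral whose kernel is
$\frac{(x-y)\cdot(x-z)}{|x-y|^2|x-z|^2}$. After multiplying by $r^2$ and averaging over $\partial B_r(0)$, Fubini's theorem yields
\begin{equation*}
r^2\fint_{\partial B_r(0)}|\nabla u|^2\,\ud\sigma=\left(\frac{2}{(n-1)!|\mathbb{S}^n|}\right)^{\!2}\int_{\mr^n}\int_{\mr^n}Q(y)Q(z)e^{n(u(y)+u(z))}F_r(y,z)\,\ud y\,\ud z,
\end{equation*}
with $F_r(y,z):=r^2\fint_{\partial B_r(0)}\frac{(x-y)\cdot(x-z)}{|x-y|^2|x-z|^2}\,\ud\sigma(x)$. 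The target identity then reduces to showing that this double integral converges to $\left(\int Qe^{nu}\right)^2$, namely $F_r\to 1$ in a suitably dominated sense.

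The pointwise limit is straightforward: for each fixed $y,z$, on $\partial B_r(0)$ we have $(x-y)\cdot(x-z)=r^2(1+O(r^{-1}))$ and $|x-y|^2|x-z|^2=r^4(1+O(r^{-1}))$ uniformly in $x$, hence $F_r(y,z)\to 1$ as $r\to\infty$. The crucial analytic step is a uniform bound $|F_r(y,z)|\leq 1$. By Cauchy--Schwarz applied to averages,
\begin{equation*}
|F_r(y,z)|\leq r^2\left(\fint_{\partial B_r(0)}\frac{1}{|x-y|^2}\,\ud\sigma\right)^{\!1/2}\left(\fint_{\partial B_r(0)}\frac{1}{|x-z|^2}\,\ud\sigma\right)^{\!1/2},
\end{equation*}
and since $n\geq 4$, Jensen's inequality together with the Lieb--Loss identity \eqref{LL identity} gives $\fint_{\partial B_r(0)}|x-y|^{-2}\,\ud\sigma\leq r^{-2}$ (and similarly for $z$), so $|F_r(y,z)|\leq 1$.

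With this uniform bound and the fact that $|Q(y)Q(z)|e^{n(u(y)+u(z))}\in L^1(\mr^n\times\mr^n)$ thanks to $Qe^{nu}\in L^1(\mr^n)$, the dominated convergence theorem applies and yields
\begin{equation*}
r^2\fint_{\partial B_r(0)}|\nabla u|^2\,\ud\sigma\;\longrightarrow\;\left(\frac{2}{(n-1)!|\mathbb{S}^n|}\right)^{\!2}\left(\int_{\mr^n}Q(y)e^{nu(y)}\,\ud y\right)^{\!2}=\alpha_0^2.
\end{equation*}
The main obstacle is the uniform control $|F_r|\leq 1$; a naive estimate using $|x-y|\geq ||x|-|y||$ blows up when $|y|$ is close to $r$, which is why the Lieb--Loss averaging identity $\fint_{\partial B_r(0)}|x-y|^{2-n}\,\ud\sigma=\min\{r^{2-n},|y|^{2-n}\}$ is essential. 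If desired, one can also argue by splitting the outer integral into $B_{R_\epsilon}(0)\times B_{R_\epsilon}(0)$ and its complement, exactly as in Lemma \ref{lem: r^2 Dealtu}, using \eqref{small integral curvature} on the tails and dominated convergence on the compact piece; this would mirror the style of the preceding proofs in the section.
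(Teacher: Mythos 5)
Your proof is correct, and it takes a genuinely different route from the paper's. The paper does not argue by dominated convergence on the kernel: it first applies the polarization identity $2(x-y)\cdot(x-z)=|x-y|^2+|x-z|^2-|y-z|^2$ to rewrite $|\nabla u|^2(x)$ as $\frac{\alpha_0}{n-2}(-\Delta u)(x)$ minus a double integral with kernel $\frac{|y-z|^2}{2|x-y|^2|x-z|^2}$, and then shows that the $r^2$-weighted spherical average of this remainder tends to zero by splitting $\mr^n\times\mr^n$ into four regions relative to $B_{R_\epsilon}(0)$ (the estimates \eqref{I_1}--\eqref{I_4} leading to \eqref{I}), finally combining with Lemma \ref{lem: r^2 Dealtu}. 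You instead keep the double-integral representation intact and pass to the limit directly, with the uniform bound $|F_r(y,z)|\le 1$ obtained from Cauchy--Schwarz together with $\fint_{\partial B_r(0)}|x-y|^{-2}\,\ud\sigma\le r^{-2}$; the latter does follow from the Lieb--Loss identity \eqref{LL identity} plus Jensen since $\frac{n-2}{2}\ge 1$, and you are right that this is the essential point -- note it is slightly sharper than Lemma \ref{lem: mean value} as stated, which only yields the bound $|y|^{-2}$ and would not give a bound uniform in $(y,z)$ near the origin. With $|F_r|\le 1$ the Fubini interchange and the domination by $|Q(y)Q(z)|e^{n(u(y)+u(z))}\in L^1(\mr^n\times\mr^n)$ are both justified, and the pointwise limit $F_r\to 1$ for fixed $(y,z)$ is elementary, so the argument closes. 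What each approach buys: yours is shorter, self-contained, and independent of Lemma \ref{lem: r^2 Dealtu}; the paper's proof recycles the same $\epsilon$-splitting technique used throughout Section \ref{section:  estimate for normal }, keeping the proofs of Lemmas \ref{lem: r^2 Dealtu}--\ref{lem:r^2nabla u^2} stylistically parallel, at the cost of the extra polarization step and the four-region case analysis.
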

\begin{proof}
Making use of Fubini's theorem, 	a direct computation yields that
	\begin{align*}
		&|\nabla u|^2(x)\\
		=&\left(\frac{2}{(n-1)!|\mathbb{S}^n|}\right)^2\int_{\mr^n}\int_{\mr^n}\frac{(x-y)\cdot(x-z)}{|x-y|^2|x-z|^2}Q(y)e^{nu(y)}Q(z)e^{nu(z)}\ud y\ud z\\
		=&\left(\frac{2}{(n-1)!|\mathbb{S}^n|}\right)^2\int_{\mr^n}\int_{\mr^n}\frac{|x-y|^2+|x-z|^2-|y-z|^2}{2|x-y|^2|x-z|^2}Q(y)e^{nu(y)}Q(z)e^{nu(z)}\ud y\ud z\\
		=&\alpha_0\frac{1}{(n-1)!|\mathbb{S}^n|}\int_{\mr^n}\frac{1}{|x-z|^2}Q(z)e^{nu(z)}\ud z\\
		&+\alpha_0\frac{1}{(n-1)!|\mathbb{S}^n|}\int_{\mr^n}\frac{1}{|x-y|^2}Q(y)e^{nu(y)}\ud y\\
		&-\left(\frac{2}{(n-1)!|\mathbb{S}^n|}\right)^2\int_{\mr^n}\int_{\mr^n}\frac{|y-z|^2}{2|x-y|^2|x-z|^2}Q(y)e^{nu(y)}Q(z)e^{nu(z)}\ud y\ud z\\
		=&\frac{-\Delta u(x)}{n-2}\alpha_0-\left(\frac{2}{(n-1)!|\mathbb{S}^n|}\right)^2\int_{\mr^n}\int_{\mr^n}\frac{|y-z|^2}{2|x-y|^2|x-z|^2}Q(y)e^{nu(y)}Q(z)e^{nu(z)}\ud y\ud z.
	\end{align*}
	Taking the same strategy as in Lemma \ref{lem: r^2 Dealtu}, for any $\epsilon>0$, there exists $R_\epsilon>1$ such that
	\begin{equation}\label{small curvature 2}
		\int_{\mr^n\backslash B_{R_\epsilon}(0)}|Q|e^{nu}\ud x<\epsilon.
	\end{equation}

For brevity, set the notation $\ud\mu(y):=|Q(y)|e^{nu(y)}\ud y$ during this  proof. 	A direct computation yields that 
	\begin{align*}
		&\left|\int_{\mr^n}\int_{\mr^n}\frac{|x|^2\cdot|y-z|^2}{2|x-y|^2\cdot|x-z|^2}Q(y)e^{nu(y)}Q(z)e^{nu(z)}\ud y\ud z\right|\\
		\leq &\int_{B_{R_\epsilon}(0)}\int_{B_{R_\epsilon}(0)}\frac{|x|^2\cdot|y-z|^2}{2|x-y|^2\cdot|x-z|^2}\ud \mu(y)\ud\mu(z)\\
		&+\int_{\mr^n\backslash B_{R_\epsilon}(0)}\int_{B_{R_\epsilon}(0)}\frac{|x|^2\cdot|y-z|^2}{2|x-y|^2\cdot|x-z|^2}\ud \mu(y)\ud\mu(z)\\
		&+\int_{B_{R_\epsilon}(0)}\int_{\mr^n\backslash B_{R_\epsilon}(0)}\frac{|x|^2\cdot|y-z|^2}{2|x-y|^2\cdot|x-z|^2}\ud \mu(y)\ud\mu(z)\\
		&+\int_{\mr^n \backslash B_{R_\epsilon}(0)}\int_{\mr^n\backslash B_{R_\epsilon}(0)}\frac{|x|^2\cdot|y-z|^2}{2|x-y|^2\cdot|x-z|^2}\ud \mu(y)\ud\mu(z)\\
		=&:I_1(x)+I_2(x)+I_3(x)+I_4(x).
	\end{align*}
	We are going to deal with these terms one by one. With the  help of dominated convergence theorem, we have
	\begin{equation}\label{I_1}
		I_1(x)\to 0,\quad \mathrm{as}\quad|x|\to\infty.
	\end{equation}
	For $|x|\geq 2R_{\epsilon}$, $|y|\leq R_\epsilon$ and $|z|\geq R_\epsilon$,  it is not hard to check that 
	$$\frac{|x|^2\cdot|y-z|^2}{2|x-z|^2\cdot|x-y|^2}\leq \frac{8|z|^2}{|x-z|^2}.$$
	Then making use of above estimate and Fubini's theorem,  for $|x|\geq 2 R_\epsilon$, one has
	\begin{align*}
		I_2(x)\leq &8 \int_{B_{R_\epsilon}(0)}|Q(y)|e^{nu(y)}\ud y\int_{\mr^n\backslash B_{R_\epsilon}(0)}\frac{|z|^2|Q(z)|e^{nu(z)}}{|x-z|^2}\ud z\\
		\leq & C\int_{\mr^n\backslash B_{R_\epsilon}(0)}\frac{|z|^2|Q(z)|e^{nu(z)}}{|x-z|^2}\ud z.
	\end{align*}
	Then,  for $|x|\geq 2R_\epsilon$, we have
	\begin{align*}
		\fint_{\partial B_r(0)}I_2(x)\ud\sigma(x)\leq &C\fint_{\partial B_r(0)}\int_{\mr^n\backslash B_{R_\epsilon}(0)}\frac{|z|^2|Q(z)|e^{nu(z)}}{|x-z|^2}\ud z\ud\sigma(x)\\
		\leq &C\int_{\mr^n\backslash B_{R_\epsilon}(0)}|Q(z)|e^{nu(z)}\fint_{\partial B_r(0)}\frac{|z|^2}{|x-z|^2}\ud\sigma(x)\ud z.
	\end{align*}
Using Lemma \ref{lem: mean value}, for  $r\geq 2R_\epsilon$,  there holds
	\begin{equation}\label{I_2}
		\fint_{\partial B_r(0)}I_2(x)\ud\sigma(x)\leq C\int_{\mr^n\backslash B_{R_\epsilon}(0)}|Q(z)|e^{nu(z)}\ud z\leq C\epsilon.
	\end{equation}
	Similarly,  one has
	\begin{equation}\label{I_3}
		\fint_{\partial B_r(0)}I_3(x)\ud\sigma(x)\leq C\epsilon.
	\end{equation}

	As for the term $I_4$, one has
	\begin{align*}
		&I_4(x)\\
		=&\int_{\mr^n \backslash B_{R_\epsilon}(0)}\int_{\mr^n\backslash B_{R_\epsilon}(0)}\frac{|x|^2\cdot(|x-y|^2+|x-z|^2-2(x-y)\cdot(x-z))}{2|x-y|^2|x-z|^2}\ud \mu(y)\ud\mu(z)\\
		=&\int_{\mr^n \backslash B_{R_\epsilon}(0)}\ud\mu(y)\int_{\mr^n\backslash  B_{R_\epsilon}(0)}\frac{|x|^2}{2|x-z|^2}\ud \mu(z)\\
		&+\int_{\mr^n \backslash B_{R_\epsilon}(0)}\ud\mu(z)\int_{\mr^n\backslash  B_{R_\epsilon}(0)}\frac{|x|^2}{2|x-y|^2}\ud \mu(y)\\
		&-\int_{\mr^n \backslash B_{R_\epsilon}(0)}\int_{\mr^n\backslash B_{R_\epsilon}(0)}\frac{|x|^2\cdot((x-y)\cdot(x-z))}{|x-y|^2|x-z|^2}\ud \mu(y)\ud\mu(z)
	\end{align*}
	Thus, using Fubini's theorem and H\"older's inequality, one has
	\begin{align*}
		I_4(x)\leq & \epsilon\int_{\mr^n\backslash  B_{R_\epsilon}(0)}\frac{|x|^2}{|x-z|^2}\ud \mu(z)\\
		&+\int_{\mr^n \backslash B_{R_\epsilon}(0)}\int_{\mr^n\backslash B_{R_\epsilon}(0)}\frac{|x|^2}{|x-y|\cdot|x-z|}\ud \mu(y)\ud\mu(z)\\
		=&\epsilon\int_{\mr^n\backslash  B_{R_\epsilon}(0)}\frac{|x|^2}{|x-z|^2}\ud \mu(z)\\
		&+\left(\int_{\mr^n\backslash B_{R_\epsilon}(0)}\frac{|x|}{|x-z|}\ud \mu(z)\right)^2\\
		\leq & \epsilon\int_{\mr^n\backslash  B_{R_\epsilon}(0)}\frac{|x|^2}{|x-z|^2}\ud\mu(z)\\
		&+\int_{\mr^n \backslash B_{R_\epsilon}(0)}\ud\mu(z) \cdot\int_{\mr^n\backslash B_{R_\epsilon}(0)}\frac{|x|^2}{|x-z|^2}\ud \mu(z)\\
		\leq &2\epsilon\int_{\mr^n}\frac{|x|^2}{|x-z|^2}\ud\mu(z).
	\end{align*}
	Following the argument as in Lemma \ref{lem: r^2 Dealtu}, one has 
	$$\fint_{\partial B_r(0)}\int_{\mr^n}\frac{|x|^2}{|x-z|^2}\ud\mu(z)\ud\sigma(x)\leq C.$$
	Thus,  we obtain that
	\begin{equation}\label{I_4}
		\fint_{\partial B_r(0)}I_4(x)\ud\sigma(x)\leq C\epsilon.
	\end{equation}
	
	Combining these estimates \eqref{I_1}, \eqref{I_2}, \eqref{I_3}, \eqref{I_4},  with  the arbitrary choice of $\epsilon$,  there holds
	\begin{equation}\label{I}
	\fint_{\partial B_r(0)}\int_{\mr^n}\int_{\mr^n}\frac{|x|^2\cdot|y-z|^2}{|x-y|^2\cdot|x-z|^2}\ud \mu(y)\ud \mu(z)\ud\sigma(x)\to 0
	\end{equation}
as $r\to\infty$.
	With the help of \eqref{I} and Lemma \ref{lem: r^2 Dealtu}, one has
	$$r^2\fint_{\partial B_r(0)}|\nabla u|^2\ud\sigma\to \alpha_0^2, \quad \mathrm{as}\quad r\to\infty.$$
\end{proof}

\begin{lemma}\label{lem:bar u}
	The normal solutions $u(x)$ to \eqref{integral equation} satisfies
	$$\bar u(r)=(-\alpha_0+o(1))\log r$$
	where $o(1)\to 0$ as $r\to\infty.$
\end{lemma}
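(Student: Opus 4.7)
\textbf{Proof proposal for Lemma \ref{lem:bar u}.} The plan is to deduce this asymptotic formula by integrating the derivative estimate supplied by Lemma \ref{lem:r partial u}, which states that $r\bar{u}'(r)\to -\alpha_0$ as $r\to\infty$. To pass from a derivative statement to the desired statement on $\bar u(r)$ itself, I would use a standard L'Hôpital/Cesàro-type argument: if $f(r)=r\bar u'(r)+\alpha_0$ satisfies $f(r)\to 0$, then $\int_{r_0}^r f(s)/s\,\ud s=o(\log r)$.

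More concretely, first I would observe that since $u$ is smooth, $\bar u(r)$ is smooth in $r$ and
$$\bar u(r)-\bar u(r_0)=\int_{r_0}^r \bar u'(s)\,\ud s=-\alpha_0\log\frac{r}{r_0}+\int_{r_0}^r\frac{s\bar u'(s)+\alpha_0}{s}\,\ud s$$
for any fixed $r_0>0$. Fix $\eta>0$. By Lemma \ref{lem:r partial u}, there exists $r_1\geq r_0$ such that $|s\bar u'(s)+\alpha_0|<\eta$ for all $s\geq r_1$. Then for $r>r_1$,
$$\left|\int_{r_0}^r\frac{s\bar u'(s)+\alpha_0}{s}\,\ud s\right|\leq \left|\int_{r_0}^{r_1}\frac{s\bar u'(s)+\alpha_0}{s}\,\ud s\right|+\eta\log\frac{r}{r_1}.$$
Dividing by $\log r$ and letting $r\to\infty$, the first term contributes $0$ and the second contributes at most $\eta$; since $\eta>0$ is arbitrary, the integral term is $o(\log r)$. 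Consequently $\bar u(r)/\log r\to -\alpha_0$, which is precisely the claimed expansion $\bar u(r)=(-\alpha_0+o(1))\log r$.

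The main obstacle is essentially cosmetic: one needs to know that $r\mapsto \bar u(r)$ is differentiable and that $\bar u'(r)=\fint_{\partial B_r(0)}\partial_r u\,\ud\sigma$, so that the quantity appearing in Lemma \ref{lem:r partial u} is actually the derivative of $\bar u$. This is justified by the smoothness of $u$ and differentiation under the integral sign (or by writing $\bar u(r)=\fint_{\partial B_1(0)}u(r\omega)\,\ud\sigma(\omega)$ and differentiating in $r$). Once that is in hand, the argument above gives the result with no additional work.
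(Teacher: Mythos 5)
Your argument is correct, and it is genuinely different from the paper's. You derive the statement by integrating the derivative asymptotics of Lemma \ref{lem:r partial u}: since $s\bar u'(s)+\alpha_0\to 0$, the identity $\bar u(r)-\bar u(r_0)=-\alpha_0\log(r/r_0)+\int_{r_0}^r (s\bar u'(s)+\alpha_0)\,s^{-1}\ud s$ together with your $\eta$-splitting gives $\bar u(r)/\log r\to-\alpha_0$; the only point needing care, namely that $\bar u'(r)=\fint_{\partial B_r(0)}\partial_r u\,\ud\sigma$, is exactly the identification already used in the proof of Lemma \ref{lem:r partial u}, so there is no circularity and the argument closes. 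The paper instead quotes a pointwise expansion (Lemma 2.3 in \cite{Li 23 Q-curvature}), namely $u(x)=(-\alpha_0+o(1))\log|x|+c_n\int_{B_1(x)}\log\frac{1}{|x-y|}\,Q e^{nu}\ud y$, and then averages over $\partial B_r(0)$, showing the spherical average of the local logarithmic potential is $o(1)\log r$ by estimating $\int_{\partial B_r(0)}\bigl|\log|x-y|\bigr|\ud\sigma$ and using $Qe^{nu}\in L^1$. Your route is more elementary and self-contained, requiring nothing beyond Lemma \ref{lem:r partial u}; the paper's route costs an external citation and an extra estimate, but it produces the pointwise statement \eqref{u =-aplha log x1}, which the paper reuses immediately afterwards (in Lemma \ref{lem:int_B_1(x)} and in part (3) of Theorem \ref{thm: R_g sign}), so the averaged Lemma \ref{lem:bar u} falls out as a near-corollary of machinery that is needed anyway.
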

\begin{proof}
		Using  Lemma 2.3 in \cite{Li 23 Q-curvature}, there holds
	\begin{equation}\label{u =-aplha log x1}
		u(x)=(-\alpha_0+o(1))\log|x|+\frac{2}{(n-1)!|\mathbb{S}^n|}\int_{B_1(x)}\left(\log\frac{1}{|x-y|}\right)Q(y)e^{nu(y)}\ud y
	\end{equation}
where $o(1)\to 0$ as $|x|\to\infty.$
	For $r\gg1$, 	one has
	\begin{align*}
		&\left|\int_{\partial B_r(0)}\int_{B_1(x)}\left(\log \frac{1}{|x-y|}\right)Q(y)e^{nu(y)}\ud y\ud\sigma(x)\right|\\
		\leq &\int_{\partial B_r(0)}\int_{B_{r+1}(0)\backslash B_{r-1}(0)}\left|\log |x-y|\right|\cdot |Q(y)|e^{nu(y)}\ud y\ud\sigma(x)\\
		\leq &\int_{B_{r+1}(0)\backslash B_{r-1}(0)}|Q(y)|e^{nu(y)}\int_{\partial B_r(0)}\left|\log|x-y|\right|\ud\sigma(x)\ud y\\
		\leq&\int_{B_{r+1}(0)\backslash B_{r-1}(0)}|Q(y)|e^{nu(y)}\int_{\partial B_r(0)\cap B_1(y)}\left|\log|x-y|\right|\ud\sigma(x)\ud y\\
		&+\int_{B_{r+1}(0)\backslash B_{r-1}(0)}|Q(y)|e^{nu(y)}\int_{\partial B_r(0)\cap (\mr^n\backslash B_1(y))}\left|\log|x-y|\right|\ud\sigma(x)\ud y\\
		\leq &\int_{B_{r+1}(0)\backslash B_{r-1}(0)}|Q(y)|e^{nu(y)}\left(C+Cr^{n-1}\log(2r+1)\right)\ud y.
	\end{align*}
	Due to  $Qe^{nu}\in L^1(\mr^n)$, there holds
	$$\fint_{\partial B_r(0)}\int_{B_1(x)}\left(\log \frac{1}{|x-y|}\right)Q(y)e^{nu(y)}\ud y\ud\sigma(x)=o(1)\log r.$$
Combining the above estimate  with  \eqref{u =-aplha log x1}, we finish our proof.
\end{proof}

\begin{lemma}\label{lem:int_B_1(x)}
	The normal solution $u(x)$ to \eqref{integral equation} satisfies 
	$$\frac{1}{|B_1(x)|}\int_{B_1(x)}u(y)\ud y=(-\alpha_0+o(1))\log|x|$$
	where $o(1)\to0$ as $|x|\to\infty.$
\end{lemma}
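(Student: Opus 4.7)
The strategy is to integrate the local decomposition of $u$ established in the proof of Lemma \ref{lem:bar u} over the Euclidean ball $B_1(x)$ and then handle each piece separately. Recall that equation \eqref{u =-aplha log x1} gives
\begin{equation*}
u(z)=(-\alpha_0+o(1))\log|z|+\frac{2}{(n-1)!|\mathbb{S}^n|}\int_{B_1(z)}\log\frac{1}{|z-y|}Q(y)e^{nu(y)}\ud y,
\end{equation*}
where the $o(1)$ term is a function of $z$ tending to $0$ as $|z|\to\infty$. The plan is to average this identity over $z\in B_1(x)$ for $|x|\to\infty$.

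For the first term on the right-hand side, note that when $z\in B_1(x)$ and $|x|\geq 2$ one has $|z|\in[|x|-1,|x|+1]$, so $\log|z|=\log|x|+O(|x|^{-1})$, and the $o(1)$ factor is uniform in $z\in B_1(x)$ since $|z|\to\infty$ uniformly on $B_1(x)$ as $|x|\to\infty$. Therefore
\begin{equation*}
\frac{1}{|B_1(x)|}\int_{B_1(x)}(-\alpha_0+o(1))\log|z|\ud z=(-\alpha_0+o(1))\log|x|.
\end{equation*}

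For the second term, apply Fubini's theorem to interchange the order of integration; the resulting $y$-region is $B_2(x)$, and for each such $y$ the inner integral is taken over $B_1(x)\cap B_1(y)$:
\begin{equation*}
\frac{1}{|B_1(x)|}\int_{B_1(x)}\int_{B_1(z)}\log\frac{1}{|z-y|}Q(y)e^{nu(y)}\ud y\ud z=\frac{1}{|B_1(x)|}\int_{B_2(x)}Q(y)e^{nu(y)}\int_{B_1(x)\cap B_1(y)}\log\frac{1}{|z-y|}\ud z\ud y.
\end{equation*}
The inner integral is bounded by the absolute constant $\int_{B_1(0)}|\log|w||\ud w$, so the whole expression is controlled by $C\int_{B_2(x)}|Q|e^{nu}\ud y$, which tends to $0$ as $|x|\to\infty$ because $Qe^{nu}\in L^1(\mr^n)$ and $B_2(x)$ moves off to infinity.

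Combining these two estimates gives the claimed asymptotic. The only delicate point, which is routine once observed, is confirming that the $o(1)$ in the representation \eqref{u =-aplha log x1} is uniform over $z\in B_1(x)$; this is immediate from the form of the proof of Lemma \ref{lem:bar u}, and no other obstacle is anticipated.
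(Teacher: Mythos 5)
Your proof is correct and takes essentially the same route as the paper: both average the representation \eqref{u =-aplha log x1} over $B_1(x)$ and dispose of the local logarithmic term via Fubini together with $Qe^{nu}\in L^1(\mr^n)$ (the paper bounds that double integral by a constant, you show it is in fact $o(1)$; either suffices against the $\log|x|$ growth). The uniformity of the $o(1)$ over $B_1(x)$, which you flag, is indeed immediate since $|z|\ge|x|-1\to\infty$.
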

\begin{proof}
	Making use of Fubini's theorem and the assumption  $Qe^{nu}\in L^1(\mr^n)$, one has
		$$\left|\int_{B_1(x)}\int_{B_1(y)}\log\frac{1}{|z-y|}Q(z)e^{nu(z)}\ud z\ud y\right|\leq C. $$
	Then, using the  estimate \eqref{u =-aplha log x1}, there holds
	\begin{align*}
		\frac{1}{|B_1(x)|}\int_{B_1(x)}u(y)\ud y=(-\alpha_0+o(1))\log|x|.
	\end{align*}
	\end{proof}

 The following lemma plays a central role in \cite{CQY} (see Lemma 3.2), where the proof employs techniques adapted from \cite{Fin}. While our argument essentially follows the approach outlined in \cite{CQY}, we present here a modified proof with some differences.
 \begin{lemma}\label{lem:e^ku=ek bar u}
 	For any $k>0$,   the normal solutions $u(x)$ to \eqref{integral equation} satisfies
 	$$\fint_{\partial B_r(0)}e^{ku}\ud \sigma=e^{k\bar u(r)+o(1)}$$
 	where $o(1)\to 0$ as $r\to\infty.$
 \end{lemma}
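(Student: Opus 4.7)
The lower bound $\fint_{\partial B_r(0)} e^{ku}\ud\sigma \geq e^{k\bar u(r)}$ follows immediately from Jensen's inequality applied to the convex function $t \mapsto e^{kt}$ against the probability measure $\ud\sigma/|\partial B_r(0)|$; the content of the lemma is therefore the matching upper bound $\fint_{\partial B_r(0)} e^{ku}\ud\sigma \leq e^{k\bar u(r) + o(1)}$. Following the strategy originating with Finn \cite{Fin} and adapted by Chang-Qing-Yang \cite{CQY}, the plan is to exploit the integral representation \eqref{integral equation} by splitting the Q-curvature mass into a compactly supported piece and a small-tail piece, and to control each separately.

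Given $\epsilon>0$, I would use $Qe^{nu}\in L^1(\mr^n)$ to choose $R_\epsilon>0$ with $\int_{\mr^n\setminus B_{R_\epsilon}(0)}|Q|e^{nu}\ud y < \epsilon$, and decompose $u(x) = \Phi_\epsilon(x) + \Psi_\epsilon(x) + C$, where $\Phi_\epsilon$ is the log-potential generated by $Q(y)e^{nu(y)}\mathbf{1}_{B_{R_\epsilon}(0)}$ and $\Psi_\epsilon$ the complementary tail. \textbf{Step 1 (bounded-support piece):} For $x\in\partial B_r(0)$ with $r\gg R_\epsilon$ and $y\in B_{R_\epsilon}(0)$, one has $|x-y|/r = 1 + O(R_\epsilon/r)$, so $\log|x-y| = \log r + O(R_\epsilon/r)$ uniformly. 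Inserting this into the definition of $\Phi_\epsilon$ gives $\Phi_\epsilon(x) = \bar\Phi_\epsilon(r) + O(R_\epsilon/r)$ uniformly in $x\in\partial B_r(0)$, and consequently
$$\fint_{\partial B_r(0)} e^{k\Phi_\epsilon}\ud\sigma = e^{k\bar\Phi_\epsilon(r)}\bigl(1+o(1)\bigr) \quad \text{as } r\to\infty.$$

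\textbf{Step 2 (small-mass tail):} The signed measure $\ud\nu := \frac{2}{(n-1)!|\mathbb{S}^n|}Q(y)e^{nu(y)}\mathbf{1}_{|y|>R_\epsilon}\ud y$ defining $\Psi_\epsilon$ has total variation bounded by $\|\nu\| = O(\epsilon)$. Writing $\ud|\nu|=\|\nu\|\ud\tilde\mu$ with $\tilde\mu$ a probability measure and using $\exp(s|\log t|)\leq t^s+t^{-s}$ together with Jensen's inequality gives
$$\exp\bigl(k|\Psi_\epsilon(x)|\bigr) \leq \int_{|y|>R_\epsilon}\biggl[\Bigl(\tfrac{|y|}{|x-y|}\Bigr)^{k\|\nu\|}+\Bigl(\tfrac{|x-y|}{|y|}\Bigr)^{k\|\nu\|}\biggr]\ud\tilde\mu(y).$$
Once $\epsilon$ is small enough that $k\|\nu\|\leq n-2$, Lemma \ref{lem: mean value} bounds the spherical average of the first term by $1$, and a companion estimate for the second term, obtained from the Lieb-Loss identity \eqref{LL identity} after splitting into the regimes $|y|\lessgtr r$, gives a bound of $1+O(\epsilon)$. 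Combining this with Step 1 through Hölder's inequality and $\bar u(r) = \bar\Phi_\epsilon(r)+\bar\Psi_\epsilon(r)+C$, then letting $\epsilon\to 0$, delivers the matching upper bound.

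The main obstacle is Step 2: obtaining a sharp exponential integrability estimate on $\partial B_r(0)$ for the log-Riesz potential of a measure with arbitrarily small total mass, uniformly in $r$. Because $\Psi_\epsilon$ receives contributions from $y$ arbitrarily close to $\partial B_r(0)$, the required Trudinger–Moser–type bound cannot be obtained from a direct compactness argument; it relies essentially on exploiting the smallness of $\|\nu\|$ to bring the exponents into the regime where Lemma \ref{lem: mean value} is applicable, which is precisely what makes the estimate uniform in the radial parameter $r$.
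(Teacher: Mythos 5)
Your Step 1 is fine, and your Step 2 machinery (Jensen with respect to the normalized tail measure, $e^{s|\log t|}\le t^s+t^{-s}$, small exponent so that Lemma \ref{lem: mean value} applies) is exactly the mechanism the paper uses for its far-field term. But there is a genuine gap in Step 2, and it comes from cutting the measure at a \emph{fixed} radius $R_\epsilon$ rather than at a radius moving with the point $x$. For $y$ in the intermediate range $R_\epsilon<|y|\ll r$ and $x\in\partial B_r(0)$ one has $\frac{|x-y|}{|y|}\sim \frac{r}{|y|}$, so
$\fint_{\partial B_r(0)}\bigl(\frac{|x-y|}{|y|}\bigr)^{k\|\nu\|}\ud\sigma(x)$ is of size $(r/|y|)^{k\|\nu\|}$, which tends to infinity as $r\to\infty$ for fixed $\epsilon$; it is not $1+O(\epsilon)$, and no variant of the Lieb--Loss identity \eqref{LL identity} can save this, since that identity controls negative powers of $|x-y|$, not positive ones. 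Equivalently, your tail potential $\Psi_\epsilon$ is not small on $\partial B_r(0)$: it behaves like $-\alpha^{tail}_\epsilon\log r$ where $\alpha^{tail}_\epsilon$ is the normalized tail mass, so $\bar\Psi_\epsilon(r)$ is unbounded in $r$. When you then combine with Step 1 via H\"older and $\bar u=\bar\Phi_\epsilon+\bar\Psi_\epsilon+C$, an uncancelled factor $e^{-k\bar\Psi_\epsilon(r)}\sim r^{O(\epsilon)}$ remains; since the statement is a limit $r\to\infty$ at fixed $\epsilon$, you cannot let $\epsilon\to 0$ first, and the argument does not close.

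The paper avoids this by splitting at $|y|=|x|/2$, i.e.\ with the cut moving with $x$. On $B_{|x|/2}(0)$ it writes $\log\frac{|y|}{|x-y|}=\log\frac{|y|}{|x|}+\log\frac{|x|}{|x-y|}$: the first piece produces a radially symmetric function $A_1$ (which trivially equals its spherical mean, so it never needs an exponential estimate), and the second piece $A_2$ is uniformly $o(1)$ because $|x-y|\ge|x|/2$ there, with the inner ball $B_{\sqrt{|x|}}(0)$ and the annulus treated separately. Only the genuinely far region $|y|>|x|/2$ is handled by your Jensen/small-exponent device, and there two things hold simultaneously that fail in your setup: the mass of $Qe^{nu}$ over $\{|y|>|x|/2\}$ itself tends to $0$, and $\frac{|x-y|}{|y|}\le 3$, so the ``negative-power'' companion term is bounded by $3^{o(1)}\to1$ rather than by a quantity growing in $r$. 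To repair your proof you would need to replace the fixed cut $R_\epsilon$ by such an $x$-dependent cut (or otherwise absorb the intermediate annulus $R_\epsilon<|y|<|x|/2$ into a term whose oscillation on $\partial B_r(0)$ is uniformly $o(1)$), at which point you recover essentially the paper's argument.
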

 
 \begin{proof}
 	Firstly, we split $u(x)$ into three parts as in \cite{CQY}:
 	\begin{align*}
 		u(x)= &\frac{2}{(n-1)!|\mathbb{S}^n|}\int_{B_{\frac{|x|}{2}}(0)}\log\frac{|y|}{|x|}Q(y)e^{nu(y)}\ud y+C\\
 		&+\frac{2}{(n-1)!|\mathbb{S}^n|}\int_{B_{\frac{|x|}{2}}(0)}\log\frac{|x|}{|x-y|}Qe^{nu}\ud y\\
 		&+\frac{2}{(n-1)!|\mathbb{S}^n|}\int_{\mr^n\backslash B_{\frac{|x|}{2}}(0)}\log\frac{|y|}{|x-y|}Qe^{nu}\ud y\\
 		=&:A_1(x)+A_2(x)+A_3(x).
 	\end{align*}
 	Since $A_1(x)$ is radially symmetric,  for any $x\in \partial B_r(0)$, there holds
 	$$A_1(x)=\fint_{\partial B_r(0)}A_1\ud\sigma.$$
 	As for the term $A_2(x)$, when $|x|\gg1$,  we split it into two terms:
 	\begin{align*}
 		A_2(x)=&\frac{2}{(n-1)!|\mathbb{S}^n|}\int_{B_{\sqrt{|x|}}(0)}\log\frac{|x|}{|x-y|}Q(y)e^{nu(y)}\ud y\\
 		&+\frac{2}{(n-1)!|\mathbb{S}^n|}\int_{B_{\frac{|x|}{2}}(0)\backslash B_{\sqrt{|x|}}(0)}\log\frac{|x|}{|x-y|}Q(y)e^{nu(y)}\ud y.
 	\end{align*}
 	Via a direct computation and the fact $Qe^{nu}\in L^1(\mr^n)$,	one has
 	$$\left|\int_{B_{\sqrt{|x|}}(0)}\log\frac{|x|}{|x-y|}Q(y)e^{nu(y)}\ud y\right|\leq C\log\frac{\sqrt{|x|}}{\sqrt{|x|}-1}$$
 	and 
 	$$\left|\int_{B_{\frac{|x|}{2}}(0)\backslash B_{\sqrt{|x|}}(0)}\log\frac{|x|}{|x-y|}Q(y)e^{nu(y)}\ud y\right|\leq C\int_{B_{\frac{|x|}{2}}(0)\backslash B_{\sqrt{|x|}}(0)}|Q|e^{nu}\ud y.$$
 	Thus, as $|x|\to\infty,$ we obtain that
 	\begin{equation}\label{A_2 to 0}
 		A_2(x)\to 0.
 	\end{equation}

 	Using the fact $|t|\leq e^t+e^{-t}$ and Fubini's theorem, there holds
 	\begin{align*}
 		\left|\fint_{\partial B_r(0)}A_3\ud\sigma\right|\leq &C\int_{\mr^n\backslash B_{|x|/2}(0)}|Q|e^{nu}\fint_{\partial B_r(0)}|\log\frac{|y|}{|x-y|}|\ud\sigma\ud y\\
 		\leq &C\int_{\mr^n\backslash B_{|x|/2}(0)}|Q|e^{nu}\fint_{\partial B_r(0)}\left(\frac{|y|}{|x-y|}+\frac{|x-y|}{|y|}\right)\ud\sigma\ud y\\
 		\leq &C\int_{\mr^n\backslash B_{|x|/2}(0)}|Q|e^{nu}\fint_{\partial B_r(0)}\left(\frac{|y|}{|x-y|}+3\right)\ud\sigma\ud y
 	\end{align*}
 	Using 	Lemma \ref{lem: mean value}, there holds
 	\begin{equation}\label{fint A-3}
 		\left|\fint_{\partial B_r(0)}A_3\ud\sigma\right|\leq C\int_{\mr^n\backslash B_{|x|/2}(0)}|Q|e^{nu}\ud y
 	\end{equation}
 	which tend to zero as $|x|\to\infty$.
 	
 	Regarding the treatment of the term $A_3(x)$, we employ a different approach compared to those presented in \cite{Fin} and \cite{CQY}.
 	We  split $A_3(x)$ as the following two terms:
 	\begin{align*}
 		A_3(x)=&\frac{2}{(n-1)!|\mathbb{S}^n|}\int_{\mr^n\backslash B_{|x|/2}(0)}\log\frac{|y|}{|x-y|}Q^+e^{nu}\ud y\\
 		&+\frac{2}{(n-1)!|\mathbb{S}^n|}\int_{\mr^n\backslash B_{|x|/2}(0)}\log\frac{|x-y|}{|y|}Q^-e^{nu}\ud y\\
 		=&:D_1(x)+D_2(x).
 	\end{align*}
 	For any $p>0$,  letting $|x|\gg1 $ such that 
 	$$\frac{2p\|Q^+e^{nu}\|_{L^1(\mr^n\backslash B_{|x|/2}(0))}}{(n-1)!|\mathbb{S}^n|}\leq n-2.$$
 	Then, if $\|Q^+e^{nu}\|_{L^1(\mr^n\backslash B_{|x|/2}(0))}>0$,  using Jensen's inequality and   Fubini's theorem, we obtain that
 	\begin{align*}
 		&\fint_{\partial B_r(0)}e^{pD_1(x)}	\ud\sigma\\
 		\leq &\fint_{\partial B_r(0)}\int_{\mr^n\backslash B_{|x|/2}(0)}\left(\frac{|y|}{|x-y|}\right)^{\frac{2p\|Q^+e^{nu}\|_{L^1(\mr^n\backslash B_{|x|/2}(0))}}{(n-1)!|\mathbb{S}^n|}}\frac{Q^+e^{nu}\ud y}{\|Q^+e^{nu}\|_{L^1(\mr^n\backslash B_{|x|/2}(0))}}\ud\sigma\\
 		\leq & \int_{\mr^n\backslash B_{|x|/2}(0)}\frac{Q^+e^{nu}}{\|Q^+e^{nu}\|_{L^1(\mr^n\backslash B_{|x|/2}(0))}}	\fint_{\partial B_r(0)}\left(\frac{|y|}{|x-y|}\right)^{\frac{2p\|Q^+e^{nu}\|_{L^1(\mr^n\backslash B_{|x|/2}(0))}}{(n-1)!|\mathbb{S}^n|}}\ud\sigma \ud y.
 	\end{align*}
 	Immediately, Lemma \ref{lem: mean value} yields that 
 	\begin{equation}\label{epII_1}
 		\fint_{\partial B_r(0)}e^{pD_1(x)}	\ud\sigma\leq 1.
 	\end{equation}	
 	For the case $\|Q^+e^{nu}\|_{L^1(\mr^n\backslash B_{|x|/2}(0))}=0$, \eqref{epII_1} holds trivially.
 	
 	Similarly, we only need to deal with $\|Q^-e^{nu}\|_{L^1(\mr^n\backslash B_{|x|/2}(0))}>0$.  There holds
 	\begin{align*}
 		&\fint_{\partial B_r(0)}e^{pD_2(x)}\ud\sigma\\
 		\leq &\int_{\mr^n\backslash B_{|x|/2}(0)}\frac{Q^-e^{nu}}{\|Q^-e^{nu}\|_{L^1(\mr^n\backslash B_{|x|/2}(0))}}	\fint_{\partial B_r(0)}\left(\frac{|x-y|}{|y|}\right)^{\frac{2p\|Q^-e^{nu}\|_{L^1(\mr^n\backslash B_{|x|/2}(0))}}{(n-1)!|\mathbb{S}^n|}}\ud\sigma \ud y\\
 		\leq &3^{\frac{2p\|Q^-e^{nu}\|_{L^1(\mr^n\backslash B_{|x|/2}(0))}}{(n-1)!|\mathbb{S}^n|}}
 	\end{align*}
 	which deduces that
 	\begin{equation}\label{e^pII_2}
 		\lim_{r\to\infty}\sup\fint_{\partial B_r(0)}e^{pD_2(x)}\ud\sigma\leq 1.
 	\end{equation}
 	Using H\"older's inequality and the estimates \eqref{epII_1},\eqref{e^pII_2}, one has
 	$$
 	\lim_{r\to\infty}\sup \fint_{\partial B_r(0)} e^{kA_3(x)}\ud\sigma\leq \lim_{r\to\infty}\sup\left(\fint_{\partial B_r(0)} e^{2kD_1(x)}\ud\sigma\right)^{\frac{1}{2}}\left(\fint_{\partial B_r(0)} e^{2kD_2(x)}\ud\sigma\right)^{\frac{1}{2}}\leq 1.
 	$$
 	Combining the above estimate with the estimates \eqref{A_2 to 0}, \eqref{fint A-3}, we finally obtain that	
 	$$
 	\lim_{r\to\infty}\sup \fint_{\partial B_r(0)} e^{ku-k\bar u}\ud\sigma=	\lim_{r\to\infty}\sup\fint_{\partial B_r(0)} e^{kA_2(x)-k\bar A_2+kA_3-k\bar A_3}\ud\sigma
 	\leq 1.$$ 
 	Using Jensen's inequality, it is easy to see that 
 	$$ \fint_{\partial B_r(0)} e^{ku}\ud\sigma\geq e^{k\bar u}.$$
 	Thus, we obtain that 
 	$$\lim_{r\to\infty}\fint_{\partial B_r(0)} e^{ku-k\bar u}\ud\sigma=1.$$
 	Hence, we finish the proof.
 \end{proof}

 \section{Scalar curvature and Q-curvature integral  }\label{section: scalar curvature and Q-curvature integral}

  As introduced before, to ensure normal metric,  Chang, Qing and Yang  \cite{CQY}  give a geometric criterion  about scalar curvature.    As a generalization,  S.  Wang and Y. Wang \cite{WW} established a  criterion under the control of the integral of negative part of scalar curvature.  For the reader's convenience, we repeat their statement as follows. 
 \begin{lemma}\label{lema: normal}(Theorem 1.4 in \cite{CQY}, Theorem 1.1 in \cite{WW})
 	Given a  conformal metric $g=e^{2u}|dx|^2$ on $\mr^n$ where $n\geq 4$ is an even integer with finite total Q-curvature. If the negative part of scalar curvature  $R_g^-$ satisfies
 	$$	\int_{\mr^n}(R^-_g)^{\frac{n}{2}}e^{nu}\ud x<+\infty,$$
 	then the metric $g$  is normal.
 \end{lemma}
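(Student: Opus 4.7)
The plan is to decompose $u=v+w$ where $v$ is the canonical Riesz-type representative and $w$ is a polyharmonic remainder, and then use the scalar curvature hypothesis to force $w$ to be a constant. Set
$$v(x):=\frac{2}{(n-1)!|\ms^n|}\int_{\mr^n}\log\frac{|y|}{|x-y|}\,Q(y)e^{nu(y)}\,\ud y.$$
Since $Qe^{nu}\in L^1(\mr^n)$ and $\frac{2}{(n-1)!|\ms^n|}\log\frac{1}{|x-y|}$ is the standard fundamental solution of $(-\Delta)^{n/2}$ in even dimension $n$, this defines a smooth function satisfying $(-\Delta)^{n/2}v=Qe^{nu}$ distributionally. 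Therefore $w:=u-v$ is smooth and polyharmonic on $\mr^n$, and it suffices to prove that $w$ is a constant.

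The first step is to show that $w$ is a polynomial. Using the conformal scalar curvature identity for $g=e^{2u}|dx|^2$,
$$R_g e^{2u}=-2(n-1)\Delta u-(n-1)(n-2)|\nabla u|^2,$$
the hypothesis reads $\int_{\mr^n}(R_g^- e^{2u})^{n/2}\,\ud x<+\infty$, which is an integrability constraint on the positive part of $2(n-1)\Delta u+(n-1)(n-2)|\nabla u|^2$. Combining this with the kernel estimates in Lemma \ref{lem: mean value} and the splitting argument in the proof of Lemma \ref{lem:e^ku=ek bar u}, one obtains that $v$ and $\nabla v$ have at most logarithmic growth in suitable spherical averages; this yields a one-sided polynomial-type upper bound on $w$ at infinity. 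A polyharmonic Liouville-type theorem for smooth solutions of $(-\Delta)^{n/2}w=0$ with such one-sided growth then forces $w$ to be a polynomial of degree at most $n-1$.

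Once $w$ is polynomial, ruling out $\deg w\ge 1$ is comparatively direct. If $\deg w=d\ge 1$, then on some open cone at infinity $|\nabla w|\ge c|x|^{d-1}$ dominates the decaying $\nabla v$, so the scalar curvature identity gives $R_g^- e^{2u}\ge c'|x|^{2(d-1)}$ on that cone, whence
$$\int_{\mr^n}(R_g^-)^{n/2}e^{nu}\,\ud x=\int_{\mr^n}(R_g^- e^{2u})^{n/2}\,\ud x\ge c''\int_{\{|x|\ge R\}\cap\mathrm{cone}}|x|^{n(d-1)}\,\ud x=+\infty,$$
contradicting the hypothesis. Hence $w$ must be a constant and $u$ satisfies \eqref{integral equation}, so $g$ is normal.

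The main obstacle I expect is extracting the one-sided growth bound on $w$ from the purely integral control on $R_g^-$: in the stronger pointwise setting $R_g\ge 0$ near infinity of \cite{CQY}, monotonicity of spherical averages can be used directly, whereas here a more delicate argument combining the mean-value formulas of Section \ref{section:  estimate for normal } with a Moser-type iteration on annular regions seems required.
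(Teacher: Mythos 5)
Your overall architecture (split $u=v+w$ with $v$ the logarithmic potential of $Qe^{nu}$, then kill the polyharmonic remainder $w$) is the right one, but the central step is not justified: you never explain how the purely integral hypothesis $\int_{\mr^n}(R_g^-)^{n/2}e^{nu}\,\ud x<\infty$ produces a one-sided polynomial upper bound on $w$. The hypothesis controls only the positive part of $\Delta u+\frac{n-2}{2}|\nabla u|^2$ in $L^{n/2}$; it gives no pointwise (nor even averaged, without further work) upper bound on $u$ itself, and knowing that $v$ and $\nabla v$ grow logarithmically in spherical averages says nothing about $w=u-v$ unless you already control $u$. This is exactly the obstacle you flag at the end, and it is the heart of the matter, so as written the proof has a genuine gap; moreover the detour through ``$w$ is a polynomial of degree $\le n-1$'' followed by a cone argument is also shaky in its second half, since $\nabla v$ and $\Delta v$ need not decay pointwise on any cone (only in spherical average), so the pointwise lower bound $R_g^-e^{2u}\ge c'|x|^{2(d-1)}$ is not available; that part could be repaired by working with spherical averages (Lemmas \ref{lem: r^2 Dealtu}, \ref{lem:r^2nabla u^2} applied to $v$) and Jensen's inequality, but it is downstream of the unproved step.

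The way to close the gap — and the route this paper's own machinery takes when it proves normality from a scalar-curvature lower bound in Section \ref{sec: conformal mass} — is to convert the hypothesis into an \emph{averaged one-sided bound on $w$'s Laplacian plus gradient}, not a growth bound on $w$. By H\"older, $\int_{B_r(x)}R_g^-e^{2u}\,\ud y\le \bigl(\int(R_g^-)^{n/2}e^{nu}\bigr)^{2/n}|B_r|^{1-2/n}\le Cr^{n-2}$ for every center $x$, hence $\int_{B_r(x)}\bigl(\Delta u+\tfrac{n-2}{2}|\nabla u|^2\bigr)\ud y\le Cr^{n-2}$ by \eqref{scalar curvature def}; combined with the potential estimate $\int_{B_r(x)}(|\Delta v|+|\nabla v|^2)\ud y\le Cr^{n-2}$ (Lemma 2.11 of \cite{Li 23 Q-curvature}, as used in the proof of Theorem \ref{thm: conformal mass}), this gives $\fint_{B_r(x)}\Delta w+c\fint_{B_r(x)}|\nabla w|^2\le Cr^{-2}$ for all $x$. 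Lemma \ref{lem: Delat h} (Pizzetti's formula plus an inductive Liouville argument, in which the $|\nabla w|^2$ term is what rules out nonconstant polynomials such as linear ones) then yields directly that $w$ is constant, with no need for your polynomial classification or the cone contradiction. I recommend replacing your first and third steps by this argument.
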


To  prove Theorem \ref{thm: complete R_g geq 0} and Theorem \ref{thm: R_g geq 0}, we just need to combine Theorem \ref{thm: CQY fang NX} with the following result.

\begin{theorem}\label{thm: R_g sign}
	Given a conformal metric $g=e^{2u}|dx|^2$ on $\mr^n$ where $n\geq 4$ is an even integer with finite total Q-curvature.
	\begin{enumerate}
		\item If 	the  scalar curvature $R_g\geq 0$ near infinity,  there holds
		$$0\leq \int_{\mr^n}Qe^{nu}\ud x\leq (n-1)!|\mathbb{S}^n|.$$
	
		\item If the  scalar curvature $R_g(x)\geq C|x|^{-p}$ near infinity for any $p\geq -2$,  there holds
		$$\frac{(n-1)!|\mathbb{S}^n|}{2}\cdot \max\{0, 1-\frac{p}{2}\}\leq \int_{\mr^n}Qe^{nu}\ud x\leq (n-1)!|\mathbb{S}^n|.$$
		
			\item  If 	the  scalar curvature $R_g\geq 0$ near infinity,  there holds
		$$\lim_{|x|\to\infty}\inf\frac{u(x)}{\log|x|}=-\frac{2}{(n-1)!|\mathbb{S}^n|}\int_{\mr^n}Qe^{nu}\ud x.$$
	\end{enumerate}
	
\end{theorem}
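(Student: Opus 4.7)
The plan begins by upgrading each hypothesis to normality of the metric. Since $R_g \geq 0$ (or $R_g \geq C|x|^{-p}$) near infinity, the negative part $R_g^-$ is compactly supported and locally bounded, hence $\int_{\mr^n}(R_g^-)^{n/2}e^{nu}\ud x < \infty$, so Lemma \ref{lema: normal} guarantees that $u$ satisfies the integral equation \eqref{integral equation}. This unlocks the sharp sphere-average asymptotics of Lemmas \ref{lem: r^2 Dealtu}, \ref{lem:r^2nabla u^2}, \ref{lem:bar u}, \ref{lem:int_B_1(x)}, and \ref{lem:e^ku=ek bar u}. The second ingredient is the conformal scalar curvature identity
$$R_g e^{2u} = -2(n-1)\Delta u - (n-1)(n-2)|\nabla u|^2,$$
which converts each curvature hypothesis into a pointwise differential inequality on $u$.

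For Part (1), $R_g \geq 0$ near infinity rewrites as $-\Delta u \geq \frac{n-2}{2}|\nabla u|^2$. Multiplying by $r^2$, averaging over $\partial B_r(0)$, and letting $r \to \infty$ via Lemma \ref{lem: r^2 Dealtu} and Lemma \ref{lem:r^2nabla u^2} yields $(n-2)\alpha_0 \geq \frac{n-2}{2}\alpha_0^2$, forcing $0 \leq \alpha_0 \leq 2$. For Part (2), the stronger hypothesis produces $-\Delta u - \frac{n-2}{2}|\nabla u|^2 \geq c|x|^{-p}e^{2u}$ near infinity for some $c>0$. Averaging the same way, the left side converges to the finite value $(n-2)\alpha_0(1-\alpha_0/2)$ by Part (1), while Lemmas \ref{lem:bar u} and \ref{lem:e^ku=ek bar u} combine to give $\fint_{\partial B_r(0)}e^{2u}\ud\sigma = r^{-2\alpha_0+o(1)}$, so the right side behaves like $cr^{2-p-2\alpha_0+o(1)}$. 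Finiteness of the left side forbids $2-p-2\alpha_0 > 0$, whence $\alpha_0 \geq 1-p/2$; combined with Part (1) this yields $\alpha_0 \geq \max\{0,1-p/2\}$.

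For Part (3), the inequality $\Delta u \leq 0$ near infinity makes $u$ superharmonic outside a large ball, so the standard mean value inequality gives $u(x) \geq \frac{1}{|B_1(x)|}\int_{B_1(x)} u(y)\ud y$ whenever $B_1(x)$ lies in the region of superharmonicity. Lemma \ref{lem:int_B_1(x)} identifies this average as $(-\alpha_0+o(1))\log|x|$, producing $\liminf_{|x|\to\infty} u(x)/\log|x| \geq -\alpha_0$. The matching upper bound follows because $\min_{\partial B_r(0)} u \leq \bar u(r) = (-\alpha_0+o(1))\log r$ by Lemma \ref{lem:bar u}, which furnishes points with $u(x_r)/\log|x_r| \leq -\alpha_0 + o(1)$.

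The most delicate step is Part (2): one must verify that the $o(1)$ correction in $r^{2-p-2\alpha_0+o(1)}$ cannot conspire with the polynomial weight to rescue a borderline growth rate. This is safe precisely because the left-hand side has a \emph{definite} finite limit, so any strictly positive leading exponent produces a genuine blow-up of the right-hand side, yielding a contradiction. A secondary subtlety is that the mean value step in Part (3) requires $B_1(x)$ to sit inside the superharmonicity region, which is automatic for $|x|$ large enough; thereafter the division by $\log|x| > 0$ preserves the inequality because $-\alpha_0 \leq 0$ by Part (1).
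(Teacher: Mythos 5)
Your proposal is correct and follows essentially the same route as the paper: normality from Lemma \ref{lema: normal}, the identity $R_ge^{2u}=-2(n-1)\Delta u-(n-1)(n-2)|\nabla u|^2$, and the sphere-average asymptotics of Lemmas \ref{lem: r^2 Dealtu}, \ref{lem:r^2nabla u^2}, \ref{lem:bar u}, \ref{lem:int_B_1(x)}, with the same contradiction argument for the exponent in Part (2). The only (harmless) deviation is in Part (3), where you use that $u$ itself is superharmonic near infinity (valid, since $R_g\geq 0$ gives $-\Delta u\geq \frac{n-2}{2}|\nabla u|^2\geq 0$), whereas the paper applies the mean value inequality to $e^{\frac{n-2}{2}u}$ and then Jensen's inequality; both yield the same lower bound via Lemma \ref{lem:int_B_1(x)}.
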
  
\begin{proof}

Via  a direct computation, the scalar curvature $R_g$ of $g=e^{2u}|dx|^2$ on $\mr^n$ can be written as 
\begin{equation}\label{scalar curvature def}
	R_g=2(n-1)e^{-2u}(-\Delta u-\frac{n-2}{2}|\nabla u|^2).
\end{equation}
 Firstly, for all of  cases, the scalar curvature is supposed  to be  non-negative near infinity and   then Lemma \ref{lema: normal} yields that $u(x)$ is normal.

Case (1): $R_g \geq 0$ near infinity.

With the  help of \eqref{scalar curvature def}, for $r\gg1$, there holds
$$
r^2\fint_{\partial B_r(0)}(-\Delta )u\ud\sigma(x)\geq \frac{n-2}{2}r^2\fint_{\partial B_r(0)}|\nabla u|^2\ud\sigma(x).
$$
By letting $r\to\infty$, Lemma \ref{lem: r^2 Dealtu} and Lemma \ref{lem:r^2nabla u^2}  deduce that 
$$(n-2)\alpha_0\geq \frac{n-2}{2}\alpha_0^2$$
which is equivalent to 
\begin{equation}\label{0 leq alpah_0 leq 2}
	0\leq \alpha_0\leq 2.
\end{equation}
Thus one has 
$$0\leq\int_{\mr^n}Qe^{nu}\ud x\leq (n-1)!|\mathbb{S}^n|.$$

Case (2): $R_g(x) \geq C|x|^{-p}>0$ near infinity. 

By using \eqref{scalar curvature def}, one has
\begin{align*}
	&r^2\fint_{\partial B_r(0)}(-\Delta u)\ud\sigma(x)-\frac{n-2}{2}r^2\fint_{\partial B_r(0)}|\nabla u|^2\ud\sigma(x)\\
	=& \frac{r^2}{2(n-1)}\fint_{\partial B_r(0)}R_ge^{2u}\ud\sigma(x)\\
	\geq &Cr^{2-p}\fint_{\partial B_r(0)}e^{2u}\ud\sigma(x)
\end{align*}
Making use of Jensen's inequality and Lemma \ref{lem:bar u}, there holds
\begin{align*}
	&r^2\fint_{\partial B_r(0)}(-\Delta u)\ud\sigma(x)-\frac{n-2}{2}r^2\fint_{\partial B_r(0)}|\nabla u|^2\ud\sigma(x)\\
	\geq& Cr^{2-p}e^{2\bar u(r)}
	=Cr^{2-p-2\alpha_0+o(1)}.
\end{align*}
If $\alpha_0<1-\frac{p}{2}$, taking $r\to\infty$ in the above inequality, we obtain a contradiction by using Lemma \ref{lem: r^2 Dealtu} and Lemma  \ref{lem:r^2nabla u^2}.  Thus 
we must have 
$$\alpha_0\geq 1-\frac{p}{2}.$$
Combining with \eqref{0 leq alpah_0 leq 2},  we finally  obtain our desired result
$$\frac{(n-1)!|\mathbb{S}^n|}{2}\cdot\max\{0,1-\frac{p}{2}\}\leq \int_{\mr^n}Qe^{nu}\ud x\leq (n-1)!|\mathbb{S}^n|.$$

Case (3): Since $R_g\geq 0$ near infinity, for $|x|\gg1$, 
there holds
$$-\Delta e^{\frac{n-2}{2}u}=R_ge^{\frac{n+2}{2}u}\geq 0, \quad \mathrm{for}\; y\in B_2(x)$$
With the help of mean value theorem for superharmonic  function and Jensen's inequality, one has
\begin{align*}
	e^{\frac{n-2}{2}u(x)}\geq &\frac{1}{|B_1(x)|}\int_{B_1(x)}e^{\frac{n-2}{2}u(y)}\ud y\\
	\geq &e^{\frac{n-2}{2}\frac{1}{|B_1(x)|}\int_{B_1(x)}u(y)\ud y}.
\end{align*}
Making use of Lemma \ref{lem:int_B_1(x)}, one has 
$$u(x)\geq -(\alpha_0+o(1))\log|x|$$
where $o(1)\to 0$ as $|x|\to\infty$. Immediately, there holds
$$\lim_{|x|\to\infty}\inf \frac{u(x)}{\log|x|}\geq -\alpha_0.$$
To show that the equality holds, we notice that 
$$\lim_{|x|\to\infty}\inf \frac{u(x)}{\log|x|}\leq \lim_{r\to\infty}\frac{\bar u(r)}{\log r}=-\alpha_0$$
where the right side comes from Lemma \ref{lem:bar u}.

Thus we finish our proof.
\end{proof} 

\hspace{2em}

{\bf Proof of Theorem \ref{thm: Volume comparison Q geq n-1!}:}

Based on our assumptions, $u(x)$ is normal due to Lemma \ref{lema: normal}. With the help of Theorem \ref{thm: R_g geq 0} and the assumption $Q\geq (n-1)!$, there holds
$$(n-1)!|\mathbb{S}^n|\geq \int_{\mr^n}Qe^{nu}\ud x\geq (n-1)!\int_{\mr^n}e^{nu}\ud x$$
which yields that
\begin{equation}\label{volume upper bound}
	\int_{\mr^n}e^{nu}\ud x\leq |\mathbb{S}^n|.
\end{equation}
On one hand, if the equality holds, we must have $Q\equiv (n-1)!$. With the help of classification theorem for normal solution (see \cite{CY MRL}, \cite{Lin}, \cite{Mar MZ}, \cite{WX} and \cite{Xu05}), there holds
\begin{equation}\label{u rep}
	u(x)=\log \frac{2\lambda}{\lambda^2+|x-x_0|^2}
\end{equation}
for some constant $\lambda>0$ and $x_0\in \mr^n$. On the other hand, if \eqref{u rep} holds, it is easy to check that $R_g\geq 0$ and
$$\int_{\mr^n}e^{nu}\ud x=|\mathbb{S}^n|.$$
Thus, we finish our proof.

\hspace{2em}

{\bf Proof of Theorem \ref{thm:scalar curvature bouended}:}

Firstly, since $R_g\geq 0$ near infinity, Lemma \ref{lema: normal} shows that $g$ is normal.
Using part (1) of Theorem \ref{thm: R_g sign}, we obtain that
\begin{equation}\label{R_g geq 0}
	0\leq \alpha_0\leq 2.
\end{equation}

Now, using the representation \eqref{scalar curvature def} and the assumption $R_g\leq C$,  Lemma \ref{lem:e^ku=ek bar u}   yields that 
\begin{align*}
		&r^2\fint_{\partial B_r(0)}(-\Delta u)\ud\sigma(x)-\frac{n-2}{2}r^2\fint_{\partial B_r(0)}|\nabla u|^2\ud\sigma(x)\\
	\leq& Cr^{2}\fint_{\partial B_r(0)}e^{2u}\ud\sigma\\	
	\leq &Cr^2e^{2\bar u(r)}.
\end{align*}
Then, applying  Lemma \ref{lem:bar u}, one has
\begin{equation}\label{R_g leq C}
	r^2\fint_{\partial B_r(0)}(-\Delta u)\ud\sigma(x)-\frac{n-2}{2}r^2\fint_{\partial B_r(0)}|\nabla u|^2\ud\sigma(x)\leq Cr^{2-2\alpha_0+o(1)}.
\end{equation}
If $\alpha_0>1$, letting $r\to\infty$ in the above inequality \eqref{R_g leq C}, Lemma \ref{lem: r^2 Dealtu} and Lemma \ref{lem:r^2nabla u^2} show that 
$$(n-2)\alpha_0-\frac{n-2}{2}\alpha_0^2\leq 0$$
which yields that $\alpha_0\geq 2$. Combining with \eqref{R_g geq 0}, we show that 
\begin{equation}\label{range alpha_0 for R_g leq C}
	0\leq \alpha_0\leq 1\; \mathrm{or}\; 2.
\end{equation}
Hence, we finish the proof of Part (1).

If $e^{nu}\in L^1(\mr^n)$ in addition, there exists a sequence $r_i \to \infty $ satisfying
\begin{equation}\label{volume finite sequence}
	r_i\int_{\partial B_{r_i}(0)}e^{nu}\ud \sigma \to 0, 
\end{equation}
as $r_i\to\infty$. With the help of \eqref{scalar curvature def} and H\"older's inequality, there holds
\begin{align*}
		&r^2\fint_{\partial B_r(0)}(-\Delta u)\ud\sigma(x)-\frac{n-2}{2}r^2\fint_{\partial B_r(0)}|\nabla u|^2\ud\sigma(x)\\
	\leq& Cr^2\fint_{\partial B_r(0)}e^{2u}\ud\sigma\\
	\leq &Cr^2\left(\fint_{\partial B_r(0)}e^{nu}\ud\sigma\right)^{\frac{2}{n}}\\
	\leq &C\left(r\int_{\partial B_r(0)}e^{nu}\ud\sigma\right)^{\frac{2}{n}}.
\end{align*}
By employing the sequence $r_i$ \eqref{volume finite sequence} chosen earlier together with Lemmas \ref{lem: r^2 Dealtu} and \ref{lem:r^2nabla u^2}, and letting  $r_i\to\infty$, we conclude that
\begin{equation}\label{volume finite}
	(n-2)\alpha_0-\frac{n-2}{2}\alpha_0^2\leq 0.
\end{equation}
With the help of Jensen's inequality and Lemma \ref{lem:bar u}, one has
\begin{align*}
	\int_{B_R(0)}e^{nu}\ud x =&\int^R_0\int_{\partial B_r(0)}e^{nu}\ud \sigma \ud r\\
	\geq &\int^R_0|\partial B_r(0)| e^{n\bar u}\ud r\\
	\geq &C\int^R_0r^{n-1}\cdot r^{-n\alpha_0+o(1)}\ud r.
\end{align*}
Using the above estimate and  the assumption $e^{nu}\in L^1(\mr^n)$, one must have 
\begin{equation}\label{alpha_0 geq 1}
	\alpha_0\geq 1.
\end{equation}
Combining the estimates \eqref{R_g geq 0}, \eqref{volume finite} with \eqref{alpha_0 geq 1}, we finally obtain that 
$$\alpha_0=2.$$

Thus, we finish the proof.

\hspace{2em}

{\bf Proof of Theorem \ref{thm: Volume comparison Q leq n-1!}:}

Firstly, the assumption $R_g\geq 0$ near infinity shows that $g$ is normal due to Lemma \ref{lema: normal}.  If the volume is infinite, the inequality \eqref{volume lower bound} holds automatically. Hence, we only need to deal with the case:  $e^{nu}\in L^1(\mr^n)$. In this situation, Theorem \ref{thm:scalar curvature bouended} shows that
\begin{equation}\label{integral identity}
	\int_{\mr^n}Qe^{nu}\ud x=(n-1)!|\mathbb{S}^n|.
\end{equation}
Since  $Q\leq (n-1)!$, the identity \eqref{integral identity} shows that 
$$\int_{\mr^n}e^{nu}\ud x\geq |\mathbb{S}^n|.$$
If the equality holds, one must have $Q\equiv (n-1)!$.  With the help of classification theorem for normal solution (see \cite{CY MRL}, \cite{Lin}, \cite{Mar MZ}, \cite{WX} and \cite{Xu05}), we finish our proof.

\hspace{2em}

{\bf Proof of Theorem \ref{thm: volume growth under scalar curvature decay}:}

Firstly,  since $R_g \geq 0$ near infinity, Lemma \ref{lema: normal} shows that $g$ is normal. Making use of Theorem \ref{thm: volume entropy} and Theorem \ref{thm: complete R_g geq 0}, we finish the proof.

\hspace{2em}

{\bf Proof of Theorem \ref{thm: integral R_g}:} 

Case (1):

Since $(R^-_g)^{\frac{n}{2}}e^{nu}\in L^1(\mr^n)$, Lemma \ref{lema: normal} yields that $u(x)$ is normal.
Using  \eqref{scalar curvature def} and applying H\"older's inequality, one has
\begin{align*}
&r^2\fint_{\partial B_r(0)}(-\Delta u)\ud\sigma(x)-\frac{n-2}{2}r^2\fint_{\partial B_r(0)}|\nabla u|^2\ud\sigma(x)\\
=& \frac{r^2}{2(n-1)}\fint_{\partial B_r(0)}R_ge^{2u}\ud\sigma(x)\\
\geq &-\frac{r^2}{2(n-1)}\fint_{\partial B_r(0)}R^-_ge^{2u}\ud\sigma(x)\\
\geq &-\frac{r^2}{2(n-1)}\left(\fint_{\partial B_r(0)}(R_g^-)^{\frac{n}{2}}e^{nu}\ud\sigma(x)\right)^{\frac{2}{n}}\\
=&-\frac{|\ms^{n-1}|^{-\frac{2}{n}}}{2(n-1)}\left(r\int_{\partial B_r(0)}(R_g^-)^{\frac{n}{2}}e^{nu}\ud\sigma(x)\right)^{\frac{2}{n}}.
\end{align*}
Based on the assumption \eqref{equ: integral R_g^-}, there exists a sequence $r_i\to\infty $ such that
\begin{equation}\label{sequence r_i}
	r_i \int_{\partial B_{r_i}(0)}(R_g^-)^{\frac{n}{2}}e^{nu}\ud\sigma(x)\to 0.
\end{equation}
Choosing such sequence $r_i$ and making use of  Lemma \ref{lem: r^2 Dealtu} and Lemma  \ref{lem:r^2nabla u^2} as well as \eqref{sequence r_i}, we obtain that
\begin{equation}\label{alpha_0-alpha_0^2 geq 0}
	(n-2)\alpha_0-\frac{n-2}{2}\alpha_0^2\geq 0
\end{equation}
by taking $r_i\to\infty.$ Hence, we obtain that 
$$0\leq \alpha_0\leq 2.$$

Case (2):

Firstly, Lemma \ref{lema: normal} yields that $u$ is normal based on the assumption $R_ge^{2u}\in L^{\frac{n}{2}}(\mr^n)$.
With the help of similar methods as before, one has 
\begin{align*}
	&r^2\fint_{\partial B_r(0)}(-\Delta u)\ud\sigma(x)-\frac{n-2}{2}r^2\fint_{\partial B_r(0)}|\nabla u|^2\ud\sigma(x)\\
	=& \frac{r^2}{2(n-1)}\fint_{\partial B_r(0)}R_ge^{2u}\ud\sigma(x)\\
	\leq &\frac{r^2}{2(n-1)}\fint_{\partial B_r(0)}R^+_ge^{2u}\ud\sigma(x)\\
	\leq &\frac{r^2}{2(n-1)}\left(\fint_{\partial B_r(0)}(R_g^+)^{\frac{n}{2}}e^{nu}\ud\sigma(x)\right)^{\frac{2}{n}}\\
	=&\frac{|\ms^{n-1}|^{-\frac{2}{n}}}{2(n-1)}\left(r\int_{\partial B_r(0)}(R_g^+)^{\frac{n}{2}}e^{nu}\ud\sigma(x)\right)^{\frac{2}{n}}
\end{align*}
where $R^+_g$ is the positive part of scalar curvature. Due to the assumption \eqref{equ: integral R_g}, there exists a sequence $r_j\to\infty$ such that
$$ r_j \int_{\partial B_{r_j}(0)}(R_g^+)^{\frac{n}{2}}e^{nu}\ud\sigma(x)\to 0.$$
By taking $r_j\to\infty$, Lemma \ref{lem: r^2 Dealtu} and Lemma  \ref{lem:r^2nabla u^2} deduce that 
\begin{equation}\label{alpha_0-alpha_0^2 leq 0}
	(n-2)\alpha_0-\frac{n-2}{2}\alpha_0^2\leq 0.
\end{equation}
Combining with \eqref{alpha_0-alpha_0^2 geq 0}, one has
\begin{equation}\label{alpah_0=0or2}
	\alpha_0=0 \quad \mathrm{or}\quad  \alpha_0=2.
\end{equation}
If $e^{nu}\in L^1(\mr^n)$ in addition, the estimate \eqref{alpha_0 geq 1} yields that 
$$\alpha_0=2.$$

Thus, we finish our proof.

\section{Complete manifolds with finitely many simple ends}\label{sec: simple ends}

Given  a complete non-compact four-dimensional manifold $(M,g)$, we say that it has finitely many simple ends if 
$$M=N\cup \left\{\bigcup_{i=1}^kE_i \right\}$$
where $N$ is a compact manifold with boundary
$$\partial N=\bigcup_{i=1}^k\partial E_i$$
and each $E_i$ is a conformally flat simple end satisfying
$$(E_i,g)=(\mr^4\backslash B_1(0),e^{2w_i}|dx|^2)$$
for some  function $w_i$.

Consider the conformal  metric on the end $(\mr^4\backslash B_1(0), e^{2w}|dx|^2)$ where
\begin{equation}\label{equ for w}
	(-\Delta)^{2}w(x)=Q(x)e^{4w(x)}, \quad x\in \mr^4\backslash B_1(0).
\end{equation}
We also say that such metric has finite total Q-curvature if $Qe^{4w}\in L^1(\mr^4\backslash B_1(0))$.
Different from the case on $\mr^4$, we say the solution to \eqref{equ for w} with finite total Q-curvature is normal if $w(x)$ has the following decomposition  
\begin{equation}\label{normal end}
	w(x)=\frac{1}{8\pi^2}\int_{\mr^4\backslash B_1(0)}\log\frac{|y|}{|x-y|}Q(y)e^{4u(y)}\ud y+\alpha_1\log|x|+h(x)
\end{equation}
where $\alpha_1$ is a constant and $h(\frac{x}{|x|^2})$ is some biharmonic function on $B_1(0)$.  When $w$ is normal, such conformal metric $e^{2w}|dx|^2$ is also said to be  normal.
For brevity, set the notation $\alpha_2$ as follows
$$\alpha_2:=\frac{1}{8\pi^2}\int_{\mr^4\backslash B_1(0)}Qe^{4u}\ud y.$$
More details about normal metric on $\mr^4\backslash B_1(0)$  can be found in \cite{CQY2}.

Leveraging the definition of $w$ and the lemmas outlined in the section \ref{section:  estimate for normal }, we are able to  establish the following lemmas for future use.

\begin{lemma}\label{lem:|x|^2nabla^2w}
	For the normal solution $w(x)$ to  \eqref{normal end}, as $r\to\infty$, there holds
	\begin{enumerate}
		\item 	$$r^2\fint_{\partial B_r(0)}|\nabla w|^2\ud\sigma(x)\to\left(\alpha_2-\alpha_1\right)^2,$$
		\item $$r^2\fint_{\partial B_r(0)}(-\Delta)w\ud\sigma(x)\to 2(\alpha_2-\alpha_1).$$
	\end{enumerate}

\end{lemma}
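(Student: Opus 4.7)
The plan is to decompose $w=v+\alpha_1\log|x|+h$, where
\[
v(x):=\frac{1}{8\pi^2}\int_{\mr^4\setminus B_1(0)}\log\frac{|y|}{|x-y|}Q(y)e^{4u(y)}\,\ud y,
\]
and analyze the three summands separately. For the integral part $v$, the key observation is that for $n=4$ the coefficient $\tfrac{1}{8\pi^2}$ equals $\tfrac{2}{(n-1)!|\ms^n|}$, so extending $Qe^{4u}$ by zero inside $B_1(0)$ lets us view $v$ as a normal solution on $\mr^4$ in the sense of \eqref{integral equation} (with vanishing additive constant). Under this identification the constant $\alpha_0$ of Section \ref{section:  estimate for normal } becomes exactly $\alpha_2$, so Lemma \ref{lem: r^2 Dealtu}, Lemma \ref{lem:r partial u}, and Lemma \ref{lem:r^2nabla u^2} applied to $v$ give
\[
r^2\fint_{\partial B_r(0)}(-\Delta)v\,\ud\sigma\to 2\alpha_2,\qquad r\bar v'(r)\to-\alpha_2,\qquad r^2\fint_{\partial B_r(0)}|\nabla v|^2\,\ud\sigma\to\alpha_2^2.
\]

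The $\alpha_1\log|x|$ summand is completely explicit: in $\mr^4$ one has $\Delta\log|x|=2/|x|^2$ and $|\nabla\log|x||^2=1/|x|^2$, so its contributions to $r^2\fint(-\Delta)\cdot\,\ud\sigma$ and $r^2\fint|\nabla\cdot|^2\,\ud\sigma$ are the exact values $-2\alpha_1$ and $\alpha_1^2$. The only nontrivial cross term between $v$ and $\alpha_1\log|x|$ is
\[
2\alpha_1\,r^2\fint_{\partial B_r(0)}\langle\nabla v,x/|x|^2\rangle\,\ud\sigma=2\alpha_1\,r\bar v'(r)\to -2\alpha_1\alpha_2,
\]
by Lemma \ref{lem:r partial u}, which is precisely the cross term needed to assemble $\alpha_2^2-2\alpha_1\alpha_2+\alpha_1^2=(\alpha_2-\alpha_1)^2$ for Part (1), while Part (2) requires only summing the two $(-\Delta)$-contributions to obtain $2\alpha_2-2\alpha_1=2(\alpha_2-\alpha_1)$.

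The delicate part is the remainder $h$. Since $H(y):=h(y/|y|^2)$ is biharmonic on $B_1(0)$, elliptic regularity makes it real-analytic at the origin, so Taylor expansion gives $H(y)=H(0)+\langle\nabla H(0),y\rangle+O(|y|^2)$. Substituting $y=x/|x|^2$ produces the asymptotic expansion
\[
h(x)=H(0)+\frac{\langle\nabla H(0),x\rangle}{|x|^2}+O(|x|^{-2})\qquad\text{as }|x|\to\infty.
\]
A direct differentiation, using $\Delta(x_j/|x|^2)=-4x_j/|x|^4$ in $\mr^4$, yields $|\nabla h(x)|=O(|x|^{-2})$ and $|\Delta h(x)|=O(|x|^{-3})$. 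Consequently $r^2\fint_{\partial B_r(0)}(-\Delta)h\,\ud\sigma=O(r^{-1})$ and $r^2\fint_{\partial B_r(0)}|\nabla h|^2\,\ud\sigma=O(r^{-2})$, and all remaining cross terms involving $h$ vanish in the limit by Cauchy--Schwarz combined with the bound $(r^2\fint|\nabla v|^2)^{1/2}=O(1)$ and $|\nabla\log|x||=|x|^{-1}$. The main obstacle is precisely this asymptotic decay analysis of $h$ and its derivatives; once the smoothness of the inverted biharmonic function $H$ at the origin is exploited to produce the expansion above, the remainder of the argument is bookkeeping of quadratic cross terms.
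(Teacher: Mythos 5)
Your proof is correct and takes essentially the same route as the paper: the same decomposition of $w$ into the logarithmic potential, $\alpha_1\log|x|$, and the remainder $h$, the same application of Lemmas \ref{lem: r^2 Dealtu}, \ref{lem:r partial u}, \ref{lem:r^2nabla u^2} to the potential part, the same explicit computation for the log term and cross term, and the same vanishing of all $h$-contributions from the decay $|\nabla h|=O(|x|^{-2})$, $|x|^2|\Delta h|\to 0$ furnished by smoothness of $h(x/|x|^2)$ at the origin. The only cosmetic difference is that you derive this decay via a Taylor expansion of the inverted biharmonic function, whereas the paper gets it directly from the chain rule.
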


\begin{proof}
	For brevity, we set
	$$w_1(x):=\frac{1}{8\pi^2}\int_{\mr^4\backslash B_1(0)}\log\frac{|y|}{|x-y|}Q(y)e^{4u(y)}\ud y,$$
	and 
	$b(x):=h(\frac{x}{|x|^2}).$
	Thus  $b(x)$ is biharmonic on $B_1(0)$.
	A direct computation yields that
	\begin{equation}\label{partial h}
		\partial_ih(x)=\frac{\partial_ib}{|x|^2}-\sum_{j=1}^4\partial_jb\frac{2x_ix_j}{|x|^4}
	\end{equation}
	which implies that 
	\begin{equation}\label{xpartial h to 0}
		|x|\cdot|\nabla h(x)|\to0 , \quad \mathrm{as}\quad |x|\to\infty.
	\end{equation}
	With the  help of Lemma \ref{lem:r^2nabla u^2} and Lemma \ref{lem:r partial u}, one has
	\begin{equation}\label{r^2w_1^2}
		r^2\fint_{\partial B_r(0)}|\nabla w_1|^2\ud\sigma(x)\to\alpha_2^2
	\end{equation}
	and
	\begin{equation}\label{rw_1}
		\fint_{\partial B_r(0)}x\cdot\nabla w_1\ud\sigma(x)\to-\alpha_2
	\end{equation}
	as $r\to\infty$.
A direct computation yields that 
	\begin{align*}
			&r^2\fint_{\partial B_r(0)}|\nabla w|^2\ud\sigma(x)\\
			= &r^2\fint_{\partial B_r(0)}|\nabla w_1|^2\ud\sigma(x)+\alpha_1^2r^2\fint_{\partial B_r(0)}|\nabla \log|x||^2\ud\sigma(x)+r^2\fint_{\partial B_r(0)}|\nabla h|^2\ud\sigma(x)\\
			&+2\alpha_1r^2\fint_{\partial B_r(0)}\nabla w_1\cdot\nabla\log|x|\ud\sigma(x)+2r^2\fint_{\partial B_r(0)}\nabla w_1\cdot\nabla h\ud\sigma(x)\\
			&+2\alpha_1r^2\fint_{\partial B_r(0)}\nabla h\cdot\nabla\log|x|\ud\sigma(x)\\
			=&r^2\fint_{\partial B_r(0)}|\nabla w_1|^2\ud\sigma(x)+\alpha_1^2+r^2\fint_{\partial B_r(0)}|\nabla h|^2\ud\sigma(x)\\
			&+2\alpha_1\fint_{\partial B_r(0)}x\cdot \nabla w_1\ud\sigma(x)+2r^2\fint_{\partial B_r(0)}\nabla w_1\cdot\nabla h\ud\sigma(x)\\
			&+2\alpha_1\fint_{\partial B_r(0)}x\cdot \nabla h\ud\sigma(x)
	\end{align*}
Using \eqref{xpartial h to 0}, one has 
$$r^2\fint_{\partial B_r(0)}|\nabla h|^2\ud\sigma(x)\to 0, \quad \mathrm{as} \quad r\to\infty$$
and 
$$\fint_{\partial B_r(0)}x\cdot \nabla h\ud\sigma(x)\to 0, \quad \mathrm{as} \quad r\to\infty.$$
Making use of H\"older's inequality and the estimates  \eqref{xpartial h to 0}, \eqref{r^2w_1^2}, one has 
$$\left|r^2\fint_{\partial B_r(0)}\nabla w_1\cdot\nabla h\ud\sigma\right|\leq \left(r^2\fint_{\partial B_r(0)}|\nabla w_1|^2\ud\sigma\right)^{\frac{1}{2}}\left(r^2\fint_{\partial B_r(0)}|\nabla h|^2\ud\sigma\right)^{\frac{1}{2}}\to 0$$
as $r\to\infty.$
Combining these estimates,  we have 
$$r^2\fint_{\partial B_r(0)}|\nabla w|^2\ud\sigma(x)\to (\alpha_2-\alpha_1)^2, \quad \mathrm{as} \quad r\to\infty.$$

	Using the same argument as \eqref{partial h} , it is not hard to check that 
\begin{equation}\label{x^2Delat h=0}
	|x|^2\cdot|\Delta h(x)|\to 0, \quad \mathrm{as}\quad |x|\to\infty.
\end{equation}
With the  help of Lemma \ref{lem: r^2 Dealtu} and the fact $n=4$, one has
\begin{equation}\label{r^2Delta w_1}
	r^2\fint_{\partial B_r(0)}(-\Delta)w_1\ud\sigma\to2\alpha_2,  \quad \mathrm{as} \quad r\to\infty.
\end{equation}
Combining these estimates \eqref{x^2Delat h=0} and \eqref{r^2Delta w_1},  there holds
\begin{align*}
	&r^2\fint_{\partial B_r(0)}(-\Delta)w\ud\sigma(x)\\
	=&r^2\fint_{\partial B_r(0)}(-\Delta)w_1\ud\sigma(x)+r^2\alpha_1\fint_{\partial B_r(0)}(-\Delta)\log|x|\ud\sigma(x)\\
	&+r^2\fint_{\partial B_r(0)}(-\Delta)h\ud\sigma(x)\\
	=&r^2\fint_{\partial B_r(0)}(-\Delta)w_1\ud\sigma(x)-2\alpha_1+r^2\fint_{\partial B_r(0)}(-\Delta)h\ud\sigma(x)\\
	&\longrightarrow2\alpha_2-2\alpha_1,\quad \mathrm{as}\quad r\to\infty.
\end{align*}
Thus,  we finish our proof.
\end{proof}

\begin{lemma}\label{lem: r partial w and bar w}
			For the normal solution $w(x)$ to  \eqref{normal end},  there holds
	\begin{enumerate}
		\item 	$$r\cdot \bar w'(r)\to -\alpha_2+\alpha_1,\quad \mathrm{as}\quad r\to\infty.$$
		\item 	$$\bar w(r)=(-\alpha_2+\alpha_1+o(1))\log r$$
	\end{enumerate}
	where $o(1)\to 0$ as $r\to\infty.$
\end{lemma}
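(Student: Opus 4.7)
The plan is to mirror the decomposition employed in Lemma \ref{lem:|x|^2nabla^2w}. Write $w = w_1 + \alpha_1 \log|x| + h$ where
$$w_1(x):=\frac{1}{8\pi^2}\int_{\mr^4\backslash B_1(0)}\log\frac{|y|}{|x-y|}Q(y)e^{4u(y)}\ud y,$$
and $h(x)=b(x/|x|^2)$ with $b$ biharmonic on $B_1(0)$. Since $w_1$ is given by the same Riesz-type integral representation as in \eqref{integral equation} (extending $Qe^{4u}$ by zero on $B_1(0)$), the proofs of Lemma \ref{lem:r partial u} and Lemma \ref{lem:bar u} apply verbatim to $w_1$, yielding $r\cdot \bar{w_1}'(r)\to -\alpha_2$ and $\bar{w_1}(r)=(-\alpha_2+o(1))\log r$ as $r\to\infty$.

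For part (1), I would split
$$r\cdot \bar w'(r)=\fint_{\partial B_r(0)}x\cdot\nabla w_1\,\ud\sigma+\alpha_1\fint_{\partial B_r(0)}x\cdot\nabla\log|x|\,\ud\sigma+\fint_{\partial B_r(0)}x\cdot\nabla h\,\ud\sigma.$$
The first term tends to $-\alpha_2$ by the analogue of Lemma \ref{lem:r partial u} applied to $w_1$; the second equals $\alpha_1$ since $x\cdot\nabla\log|x|\equiv 1$; the third tends to $0$ by the decay estimate \eqref{xpartial h to 0}. Summing gives the desired limit $-\alpha_2+\alpha_1$.

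For part (2), the cleanest route is to integrate part (1): writing $\bar w'(r)=(-\alpha_2+\alpha_1+o(1))/r$, for any $\varepsilon>0$ there exists $R_\varepsilon$ such that $|\bar w'(s)-(-\alpha_2+\alpha_1)/s|\leq \varepsilon/s$ for all $s\geq R_\varepsilon$. Integrating from $R_\varepsilon$ to $r$ gives
$$\bar w(r)=\bar w(R_\varepsilon)+(-\alpha_2+\alpha_1)\log\frac{r}{R_\varepsilon}+E(r),\qquad |E(r)|\leq \varepsilon\log\frac{r}{R_\varepsilon},$$
so $\limsup_{r\to\infty}|\bar w(r)-(-\alpha_2+\alpha_1)\log r|/\log r\leq \varepsilon$; the arbitrariness of $\varepsilon$ yields $\bar w(r)=(-\alpha_2+\alpha_1+o(1))\log r$. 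Alternatively, one may argue directly by noting that since $b$ is biharmonic (hence smooth) on $B_1(0)$, it is bounded near the origin, so $\bar h(r)=O(1)=o(\log r)$, and combining with the analogue of Lemma \ref{lem:bar u} for $w_1$ and the trivial spherical average of $\alpha_1\log|x|$ gives the same conclusion.

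There is no essential obstacle; the argument is routine bookkeeping once one transfers the estimates of Section \ref{section:  estimate for normal } to $w_1$ and observes that the harmonic-type correction term $h$ only contributes lower-order terms both to the radial derivative and to the spherical average.
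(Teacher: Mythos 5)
Your proposal is correct and follows essentially the same route as the paper: the identical decomposition $w=w_1+\alpha_1\log|x|+h$, with part (1) obtained exactly as in the paper from Lemma \ref{lem:r partial u} applied to $w_1$, the identity $x\cdot\nabla\log|x|\equiv 1$, and the decay \eqref{xpartial h to 0}. For part (2) your primary argument (integrating the asymptotic $r\bar w'(r)\to-\alpha_2+\alpha_1$) is a valid minor variant, and your stated alternative --- boundedness of $h$ from the biharmonicity of $b$ on $B_1(0)$ combined with Lemma \ref{lem:bar u} for $w_1$ --- is precisely the paper's proof.
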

\begin{proof}
	A direct computation yields that
	\begin{align*}
		r\cdot \bar w'(r)=&\fint_{\partial B_r(0)}x\cdot\nabla w_1\ud\sigma(x)+\alpha_1\fint_{\partial B_r(0)}x\cdot\nabla \log|x|\ud\sigma(x)+\fint_{\partial B_r(0)}x\cdot \nabla h\ud\sigma(x)\\
		=&\fint_{\partial B_r(0)}x\cdot\nabla w_1\ud\sigma(x)+\alpha_1+\fint_{\partial B_r(0)}x\cdot \nabla h\ud\sigma(x).
	\end{align*}
By using Lemma \ref{lem:r partial u} and the estimate \eqref{xpartial h to 0}, one has
$$	r\cdot \bar w'(r)\to -\alpha_2+\alpha_1,\quad \mathrm{as}\quad r\to\infty.$$
Since $h(\frac{x}{|x|^2})$ is biharmonic in $B_1(0)$, it is easy to see that 
$$|h(x)|\leq C,\quad \mathrm{as}\quad |x|\to\infty.$$ 
Using such estimate and Lemma \ref{lem:bar u}, there holds
$$\bar w(r)=\bar w_1(r)+\alpha_1\log r+\fint_{\partial B_r(0)}h\ud\sigma=(-\alpha_2+\alpha_1+o(1))\log r.$$
\end{proof}

Similar to  the case for $\mr^n$, Chang, Qing,  and Yang \cite{CQY2} showed that if the scalar curvature is non-negative near infinity,  the conformal metric $e^{2w}|dx|^2$ on $\mr^4\backslash B_1(0)$ is also normal. 
\begin{prop}\label{prop: normal end}(See Proposition 1.12 in \cite{CQY2})
	Given a conformal metric $(\mr^4\backslash B_1(0),e^{2w}|dx|^2)$ with finite total Q-curvature. Supposing  that the scalar curvature  is non-negative near infinity, then the metric $g=e^{2w}|dx|^2$  is normal.
\end{prop}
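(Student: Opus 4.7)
The strategy is to mimic the proof for the $\mr^4$ setting in \cite{CQY} and reduce the statement to a classification of biharmonic functions with an isolated singularity. First, I would define the potential
$$w_1(x):=\frac{1}{8\pi^2}\int_{\mr^4\backslash B_1(0)}\log\frac{|y|}{|x-y|}Q(y)e^{4w(y)}\ud y$$
and set $v:=w-w_1$. Since $(-\Delta)^2 w_1=Qe^{4w}$ on $\mr^4\backslash B_1(0)$ and $w$ solves \eqref{equ for w}, the remainder $v$ is biharmonic on the exterior domain. The goal is then to prove that $v(x)=\alpha_1\log|x|+h(x)$ with $h(x/|x|^2)$ biharmonic on $B_1(0)$.

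The next step exploits the conformal invariance of $\Delta^2$ in dimension four: under the inversion $x\mapsto y=x/|x|^2$, the biharmonic Kelvin transform in four dimensions reduces to $\tilde v(y):=v(y/|y|^2)$, which is biharmonic on the punctured ball $B_1(0)\backslash\{0\}$. To extract the asserted decomposition \eqref{normal end}, I would classify the possible singular behavior of $\tilde v$ at the origin. A distributional biharmonic function on $B_1(0)\backslash\{0\}$ which is locally integrable near $0$ admits an expansion, obtained by projecting onto spherical harmonics, as the sum of a biharmonic function on all of $B_1(0)$ and a finite collection of singular modes built from $\log|y|$, $|y|^{-2}$, $|y|^{-2}\log|y|$, and the higher-order modes $|y|^{-k}Y_m(y/|y|)$ for $k\geq 1$. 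The task becomes to show that under the curvature hypothesis, only the $\log|y|$ mode survives, producing $\alpha_1\log|x|$ upon inversion, while the remaining regular piece becomes $h$.

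To eliminate the other modes, I would combine the pointwise scalar curvature inequality in dimension four,
$$-\Delta w\geq |\nabla w|^2\geq 0\quad\text{near infinity},$$
coming from \eqref{scalar curvature def}, with the sharp radial asymptotics for the potential part supplied by Lemma \ref{lem: r^2 Dealtu}, Lemma \ref{lem:r partial u}, Lemma \ref{lem:r^2nabla u^2}, and Lemma \ref{lem:bar u} applied to $w_1$. Concretely, on spheres of radius $r\to\infty$ I would project the identity $\Delta w=\Delta w_1+\Delta v$ onto each spherical harmonic. The projection of $\Delta w_1$ is controlled by those lemmas, while the projection of $\Delta v$ decomposes as a finite sum of pure powers of $r$ within each spherical harmonic subspace, with coefficients in bijection with the singular modes of $\tilde v$ at $0$. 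The non-negativity of $-\Delta w$ together with its lower bound $|\nabla w|^2\geq 0$ then forces every non-logarithmic singular coefficient to vanish, since a nontrivial contribution from any such mode would impose either polynomial growth of the wrong sign or an oscillation on spheres that cannot be balanced by the known asymptotics of $w_1$.

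The main obstacle is precisely this mode-by-mode elimination: one must argue that any admissible singular mode of $\tilde v$ beyond the pure logarithm would drive some spherical projection of $-\Delta w$ to acquire the wrong sign or growth rate at infinity, contradicting the curvature bound. The delicacy is that the quadratic lower bound $|\nabla w|^2$ couples the first derivative of $v$ to the second, so the elimination argument must simultaneously track both orders of the biharmonic remainder while exploiting the precise leading-order cancellations already established in Section \ref{section:  estimate for normal }. Once every singular mode save $\log|y|$ is ruled out, the Kelvin inversion converts the residue into $\alpha_1\log|x|+h(x)$ with $h(x/|x|^2)$ biharmonic on $B_1(0)$, which is the claimed normal form.
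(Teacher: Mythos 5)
The paper itself offers no proof here — it simply cites Proposition 1.12 of \cite{CQY2} — so your proposal must stand on its own, and as written it has two genuine gaps, both located exactly where you admit "the main obstacle" lies. First, your finite singular-mode expansion of $\tilde v(y)=v(y/|y|^2)$ at the origin is only valid under the hypothesis you slip in parenthetically, namely that $\tilde v$ is locally integrable near $0$; this is equivalent to an a priori polynomial growth bound on $v=w-w_1$ at infinity, which is not free. A biharmonic function on the exterior domain $\mr^4\setminus B_1(0)$ can carry infinitely many growing spherical-harmonic modes and grow super-polynomially (the analogue of $\mathrm{Re}\,e^{z}$ in the plane), in which case its Kelvin transform has an essential singularity at $0$ and no decomposition into "a biharmonic function plus finitely many derivatives of $\log|y|$" exists. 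Extracting this growth control from $R_g\geq 0$ near infinity is the actual content of the proposition, and your outline assumes it rather than proves it.

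Second, the elimination mechanism you describe does not work as stated: the curvature information is the pointwise inequality $-\Delta w\geq|\nabla w|^2\geq 0$, and its sign survives only after projection onto the constant spherical harmonic (the spherical average); the projection of $-\Delta w$ onto a nonconstant mode $Y_k$ has no sign, so "non-negativity forces every non-logarithmic coefficient to vanish" is not an argument mode by mode. What actually kills a growing mode $r^kY_k$ (including the radial $r^2$ and $r^2\log r$ pieces, which invert to the singular modes $|y|^{-2}$, $|y|^{-2}\log|y|$ you list) is the quadratic term: such a mode forces $r^2\fint_{\partial B_r(0)}|\nabla v|^2\ud\sigma\to\infty$, while the inequality $\Delta v+c|\nabla v|^2\leq-\Delta w_1+C|\nabla w_1|^2$ together with Lemma \ref{lem: r^2 Dealtu} and Lemma \ref{lem:r^2nabla u^2} applied to $w_1$ bounds the relevant spherical averages by $O(r^{-2})$ — an argument in the spirit of Lemma \ref{lem: Delat h}, but adapted to spheres on an exterior domain, where the Pizzetti/mean-value machinery over balls centered at arbitrary points is unavailable. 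Until you carry out this average-level Liouville-type argument (simultaneously handling $\Delta v$ and $|\nabla v|^2$, as you note), the reduction to the normal form \eqref{normal end} is not established; the skeleton (set $v=w-w_1$, use the four-dimensional Kelvin transform, aim for a pure $\log$ singularity) is the right one, but the two steps above are precisely the proof.
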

Set the isoperimetric ratio  for  $(\mr^4\backslash B_1(0),g=e^{2w}|dx|^2)$ as follows
$$I_g(r):=\frac{\left(\int_{\partial B_r(0)}e^{3w}\ud\sigma(x)\right)^{\frac{4}{3}}}{4(2\pi^2)^{\frac{1}{3}}\int_{B_r(0)\backslash B_1(0)}e^{4w}\ud x}.$$
Chang, Qing,  and Yang \cite{CQY2} showed that the limit of such  isoperimetric ratio exists as $r$ tends to infinity if the metric is complete and normal.
\begin{prop}\label{prop in CQY}(Proposition  1.6 and Proposition 1.11 in \cite{CQY2})
	Suppose that  $(\mr^4\backslash B_1(0), e^{2w}|dx|^2)$ is a complete normal metric. Then  there holds
	$$\lim_{r\to\infty}I_g(r)=\lim_{r\to\infty}\left(1+r\cdot\bar w'(r)\right)\geq 0.$$
\end{prop}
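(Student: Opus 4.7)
My approach is to reduce both $I_g(r)$ and $1+r\bar{w}'(r)$ to the averaged radial profile $\bar{w}(r)$, then apply L'Hopital's rule to the resulting ratio, using the completeness hypothesis to secure both the non-negativity of the limit and the divergence of the denominator needed for L'Hopital. Since $|\partial B_r|=2\pi^2 r^3$ in dimension four, Lemma~\ref{lem:e^ku=ek bar u} (with $k=3,4$) gives
$$\int_{\partial B_r(0)}e^{3w}\,\ud\sigma=2\pi^2 r^3\,e^{3\bar{w}(r)+o(1)},\qquad \int_{\partial B_r(0)}e^{4w}\,\ud\sigma=2\pi^2 r^3\,e^{4\bar{w}(r)+o(1)}.$$
Combined with the coarea formula $\tfrac{d}{dr}\int_{B_r\setminus B_1}e^{4w}\,\ud x=\int_{\partial B_r}e^{4w}\,\ud\sigma$, a short calculation reduces the isoperimetric ratio to
$$I_g(r)=\frac{r^4 e^{4\bar{w}(r)+o(1)}}{4\int_1^r s^3 e^{4\bar{w}(s)+o(1)}\,\ud s}.$$

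Next, I would set $\Phi(r):=r^4 e^{4\bar{w}(r)}$ and $\Psi(r):=4\int_1^r s^3 e^{4\bar{w}(s)}\,\ud s$. Differentiation yields
$$\Phi'(r)=4r^3 e^{4\bar{w}(r)}\bigl(1+r\bar{w}'(r)\bigr)=\Psi'(r)\bigl(1+r\bar{w}'(r)\bigr),$$
so $\Phi'/\Psi'=1+r\bar{w}'(r)$, which by Lemma~\ref{lem: r partial w and bar w} tends to $L:=1-(\alpha_2-\alpha_1)$. I would then absorb the $e^{o(1)}$ factors in $I_g(r)$: the outer factor trivially tends to $1$, while for the integral I split $\int_1^r=\int_1^{s_0}+\int_{s_0}^r$, use $e^{o(1)}\in[1-\eta,1+\eta]$ on the tail, and note that the bounded head becomes negligible once $\Psi(r)\to\infty$. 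Provided $\Psi(r)\to\infty$, L'Hopital's rule then gives $I_g(r)\to L$.

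The remaining task is to obtain $L\geq 0$ and $\Psi(r)\to\infty$ from completeness. Completeness forces every radial ray $t\mapsto t\theta$ to have infinite $g$-length, i.e., $\int_1^\infty e^{w(r\theta)}\,\ud r=\infty$ for each $\theta\in\ms^3$; integrating over $\theta$ and using Tonelli with Lemma~\ref{lem:e^ku=ek bar u} at $k=1$ gives $\int_1^\infty e^{\bar{w}(r)+o(1)}\,\ud r=\infty$. Combined with $\bar{w}(r)=-(\alpha_2-\alpha_1+o(1))\log r$ from Lemma~\ref{lem: r partial w and bar w}, this forces $\alpha_2-\alpha_1\leq 1$, i.e., $L\geq 0$. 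When $L>0$, the integrand $s^3 e^{4\bar{w}(s)}\sim s^{3-4(\alpha_2-\alpha_1)+o(1)}$ has exponent strictly greater than $-1$, so $\Psi(r)\to\infty$ and L'Hopital closes the argument. For the borderline case $L=0$, an extra touch is needed: if $\Psi(r)\to\infty$, L'Hopital still yields $I_g(r)\to 0=L$; if $\Psi(r)$ stays bounded, rewriting $\Psi(r)=4\int_1^r\Phi(s)/s\,\ud s$ shows that $\Phi(r)$ cannot tend to a positive limit (else $\Psi$ would diverge), so $\Phi(r)\to 0$ and again $I_g(r)\to 0=L$. The main obstacle is precisely this borderline $L=0$ analysis together with the non-uniform $o(1)$-absorption inside the integral, where a careful head/tail split is essential.
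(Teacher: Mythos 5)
First, a point of reference: the paper does not prove this proposition at all --- it is quoted verbatim from \cite{CQY2} (Propositions 1.6 and 1.11) --- so your attempt can only be judged on its own merits. Your overall route is sound and self-contained in the spirit of Sections 2 and 4 of the paper: reduce both $I_g(r)$ and $1+r\bar w'(r)$ to $\bar w$ via the sphere-average lemma and the coarea formula, identify the common limit $L=1-(\alpha_2-\alpha_1)$ through L'H\^opital, and extract $\alpha_2-\alpha_1\le 1$ (hence $L\ge 0$) from the infinite length of radial rays; this is exactly the inequality the paper later reads off from the proposition in Lemma \ref{lem: 0 leq I_g leq 1}. Two steps, however, need repair. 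First, Lemma \ref{lem:e^ku=ek bar u} (including your $k=1$ use in the completeness step) is stated for normal solutions of \eqref{integral equation} on all of $\mr^n$, whereas $w$ is only normal in the sense of \eqref{normal end}, with the extra terms $\alpha_1\log|x|+h(x)$. You must transfer the lemma: apply it to the integral part $w_1$ (its proof only uses that the density is in $L^1$), note that $\alpha_1\log|x|$ is constant on $\partial B_r$, and that $h(x)=b(x/|x|^2)$ with $b$ biharmonic, hence smooth, on $B_1(0)$, so $h\to b(0)$ uniformly on $\partial B_r$. This is routine and mirrors how the paper itself proves Lemmas \ref{lem:|x|^2nabla^2w} and \ref{lem: r partial w and bar w}, but as written your invocation is not justified.

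Second, and more substantively, the borderline case $L=0$ with $\Psi$ bounded contains a logical hole: from $\int_1^\infty \Phi(s)s^{-1}\,\ud s<\infty$ you infer that ``$\Phi$ cannot tend to a positive limit, so $\Phi\to 0$''. That is a false dichotomy --- $\Phi$ need not have any limit, and convergence of the integral by itself does not exclude an oscillating $\Phi$ with $\limsup\Phi>0$, which would leave $\limsup I_g>0$ on the table. The fix is already in your hands: from $\Phi'=\Psi'(1+r\bar w')$ you get $(\log\Phi)'(r)=4\bigl(1+r\bar w'(r)\bigr)/r$ with $1+r\bar w'(r)\to 0$, so $\Phi$ is slowly varying in the sense that $|\log\Phi(s)-\log\Phi(r)|\le \epsilon\log(s/r)$ for $s\in[r,2r]$ and $r$ large. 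If $\Phi(r_j)\ge\delta>0$ along $r_j\to\infty$, this yields $\int_{r_j}^{2r_j}\Phi(s)s^{-1}\,\ud s\ge c\,\delta$ on disjoint intervals, contradicting the convergence of the integral; hence $\Phi\to 0$ and $I_g(r)\to 0=L$ as you claim. With these two points patched (and the standard reading of completeness as completeness at infinity, which is all your radial-ray argument uses), the proof closes.
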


With the aid of the aforementioned preparations, we aim to give  the control  of  isoperimetric ratio by constraining the lower bound of scalar curvature.
\begin{lemma}\label{lem: 0 leq I_g leq 1}
Given a  complete metric $g=e^{2w}|dx|^2$ on $\mr^4\backslash B_1(0)$ with finite total Q-curvature.
	If the scalar curvature $R_g\geq 0$ near infinity, then there holds
	$$0\leq\lim_{r\to\infty} I_g(r)\leq 1.$$
\end{lemma}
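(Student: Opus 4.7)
The plan is to combine the preceding propositions and lemmas of this section along the same lines as the proof of Case (1) in Theorem \ref{thm: R_g sign}. First, since $R_g \geq 0$ near infinity, Proposition \ref{prop: normal end} ensures that $w$ is normal, so the decomposition \eqref{normal end} is available and Proposition \ref{prop in CQY} applies. The latter immediately gives
$$\lim_{r\to\infty} I_g(r) = \lim_{r\to\infty}\bigl(1 + r\cdot \bar w'(r)\bigr) \geq 0,$$
which settles the lower bound.

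For the upper bound, Lemma \ref{lem: r partial w and bar w} identifies the limit on the right as $1-(\alpha_2-\alpha_1)$, so it suffices to show $\alpha_2 - \alpha_1 \geq 0$. To this end, I would use the pointwise expression \eqref{scalar curvature def}, which in dimension $n=4$ reads
$$R_g = 6\, e^{-2w}\bigl(-\Delta w - |\nabla w|^2\bigr).$$
The hypothesis $R_g \geq 0$ near infinity therefore translates to the pointwise inequality $-\Delta w \geq |\nabla w|^2$ for $|x|$ sufficiently large. Averaging over $\partial B_r(0)$, multiplying by $r^2$, and passing to the limit using Lemma \ref{lem:|x|^2nabla^2w} yields
$$2(\alpha_2 - \alpha_1) \;\geq\; (\alpha_2 - \alpha_1)^2,$$
which forces $0 \leq \alpha_2 - \alpha_1 \leq 2$. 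In particular $\alpha_2 - \alpha_1 \geq 0$, so
$$\lim_{r\to\infty} I_g(r) = 1 - (\alpha_2 - \alpha_1) \leq 1,$$
completing the argument.

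The whole proof is essentially a direct bookkeeping once the structural results of this section are in hand; there is no serious obstacle. The only point requiring any care is verifying that the pointwise scalar curvature inequality survives averaging and the limit process correctly, but this is handled verbatim by Lemma \ref{lem:|x|^2nabla^2w}, which was designed precisely to track $r^2\fint_{\partial B_r}(-\Delta w)\,d\sigma$ and $r^2\fint_{\partial B_r}|\nabla w|^2\,d\sigma$ for normal $w$ with the harmonic-at-infinity correction encoded by $\alpha_1$.
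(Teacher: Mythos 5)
Your proposal is correct and follows essentially the same route as the paper: normality via Proposition \ref{prop: normal end}, the lower bound from Proposition \ref{prop in CQY} together with Lemma \ref{lem: r partial w and bar w}, and the upper bound by averaging the pointwise inequality $-\Delta w \geq |\nabla w|^2$ over $\partial B_r(0)$ and invoking Lemma \ref{lem:|x|^2nabla^2w} to obtain $2(\alpha_2-\alpha_1)\geq(\alpha_2-\alpha_1)^2$, hence $0\leq\alpha_2-\alpha_1\leq 2$. No gaps.
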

\begin{proof}
	Firstly, due to $R_g\geq 0$ near infinity, Proposition \ref{prop: normal end} yields that $w$ is normal.
	Recall the definition $R_g$ for  $(\mr^4\backslash B_1(0), e^{2w}|dx|^2)$
	\begin{equation}\label{R_g for n=4}
		R_g=6e^{-2w}\left(-\Delta w-|\nabla w|^2\right).
	\end{equation}
	Based on the assumption that $R_g\geq 0$ near infinity,  for $r\gg1$, there holds
	$$r^2\fint_{\partial B_r(0)}(-\Delta)w\ud\sigma\geq r^2\fint_{\partial B_r(0)}|\nabla w|^2\ud\sigma.$$
	Making use of Lemma \ref{lem:|x|^2nabla^2w}, letting $r\to\infty$, we have
	\begin{equation}\label{alpha_2-alpha_1}
		0\leq \alpha_2-\alpha_1\leq 2.
	\end{equation}
By using Lemma \ref{lem: r partial w and bar w} and Proposition \ref{prop in CQY}, there holds
\begin{equation}\label{1-alpha_2+alpha_1 leq 0}
	\lim_{r\to\infty}I_g(r)=1+\alpha_1-\alpha_2\geq 0.
\end{equation}
Combining with \eqref{alpha_2-alpha_1}, we finally obtain that 
$$0\leq \lim_{r\to\infty}I_g(r)\leq 1.$$
\end{proof}

\begin{lemma}\label{lem: I_g =0}
	Given a  complete metric $g=e^{2w}|dx|^2$ on $\mr^4\backslash B_1(0)$ with finite total Q-curvature.	If $R_g\geq C$ for some constant $C>0$ near infinity, there holds
	$$\lim_{r\to\infty} I_g(r)=0.$$
\end{lemma}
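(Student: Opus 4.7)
The plan is to mirror the strategy of Case (2) of Theorem \ref{thm: R_g sign}, transferred from $\mr^n$ to the end $\mr^4\setminus B_1(0)$. Set $\beta:=\alpha_2-\alpha_1$. Since $R_g\geq C>0$ implies $R_g\geq 0$ near infinity, Proposition \ref{prop: normal end} gives that $w$ is normal, while combining Lemma \ref{lem: r partial w and bar w} with Proposition \ref{prop in CQY} identifies $\lim_{r\to\infty}I_g(r)=1-\beta$, and Lemma \ref{lem: 0 leq I_g leq 1} tells us $\beta\in[0,1]$. The task thus reduces to showing $\beta\geq 1$.

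To achieve this, I will plug the scalar curvature formula $R_g=6e^{-2w}(-\Delta w-|\nabla w|^2)$ into the hypothesis $R_g\geq C>0$ to obtain, for all large $r$,
$$r^2\fint_{\partial B_r(0)}(-\Delta w)\ud\sigma-r^2\fint_{\partial B_r(0)}|\nabla w|^2\ud\sigma=\frac{r^2}{6}\fint_{\partial B_r(0)}R_g\,e^{2w}\ud\sigma\geq \frac{C}{6}\,r^2\fint_{\partial B_r(0)}e^{2w}\ud\sigma.$$
Jensen's inequality together with the asymptotic $\bar w(r)=(-\beta+o(1))\log r$ from Lemma \ref{lem: r partial w and bar w} then yields a lower bound for the right-hand side of the form $\frac{C}{6}r^{2-2\beta+o(1)}$, while Lemma \ref{lem:|x|^2nabla^2w} makes the left-hand side converge to the finite number $2\beta-\beta^2$. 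Consequently, if $\beta<1$ the exponent $2-2\beta$ is strictly positive and the right-hand side blows up as $r\to\infty$, a contradiction. This delivers $\beta\geq 1$, and combined with the upper bound $\beta\leq 1$ I conclude $\beta=1$, giving $\lim_{r\to\infty}I_g(r)=0$.

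I do not anticipate any serious obstacle: every averaged asymptotic needed has already been established in Lemmas \ref{lem:|x|^2nabla^2w} and \ref{lem: r partial w and bar w}, and the Jensen-type comparison is structurally identical to the one performed for $\mr^n$ in Theorem \ref{thm: R_g sign}. The only detail worth verifying carefully is the exponent match: replacing $\fint_{\partial B_r(0)}e^{2w}\ud\sigma$ by $e^{2\bar w(r)}=r^{-2\beta+o(1)}$ via Jensen and multiplying by the leading $r^2$ factor indeed produces the claimed $r^{2-2\beta+o(1)}$ lower bound.
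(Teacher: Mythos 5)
Your proposal is correct and follows essentially the same route as the paper: normality via Proposition \ref{prop: normal end}, the Jensen comparison $r^2\fint_{\partial B_r(0)}e^{2w}\ud\sigma\geq r^{2-2(\alpha_2-\alpha_1)+o(1)}$ against the finite limit from Lemma \ref{lem:|x|^2nabla^2w}, forcing $\alpha_2-\alpha_1\geq 1$, and then the reverse inequality from Proposition \ref{prop in CQY} (equivalently Lemma \ref{lem: 0 leq I_g leq 1}) to conclude $\lim_{r\to\infty}I_g(r)=0$. No gaps.
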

\begin{proof}
	Due to same reason as before, $w$ is normal.
	Using  Jensen's inequality and Lemma \ref{lem: r partial w and bar w},  for $r\gg1$,  one has
	\begin{align*}
		&r^2\fint_{\partial B_r(0)}(-\Delta)w\ud\sigma-r^2\fint_{\partial B_r(0)}|\nabla w|^2\ud\sigma\\
		=&\frac{1}{6}r^2\fint_{\partial B_r(0)}R_ge^{2w}\ud\sigma\\
		\geq &\frac{C}{6}r^2\fint_{\partial B_r(0)}e^{2w}\ud\sigma\\
		\geq &\frac{C}{6}r^2e^{2\bar w(r)}\\
		=&\frac{C}{6}r^{2-2\alpha_2+2\alpha_1+o(1)}.
	\end{align*}
With the  help of Lemma \ref{lem:|x|^2nabla^2w},  letting $r\to\infty$ in the above inequality, we must  have
$$2-2\alpha_2+2\alpha_1\leq 0.$$
Combining with \eqref{1-alpha_2+alpha_1 leq 0},  we finally obtain that
$$\lim_{r\to\infty}I_g(r)=0.$$
\end{proof}

The proof of Theorem \ref{thm: chim(M) R_g geq 0} can be directly derived by combining the key identity established in \cite{CQY2} with Lemma \ref{lem: 0 leq I_g leq 1} and Lemma \ref{lem: I_g =0}.
\begin{theorem}\label{CQY theorem}(See Corollary in \cite{CQY2})
	Given  a complete four-dimensional manifold $(M^4, g)$ with finitely many  conformally flat simple ends and finite total Q-curvature. 
	Suppose that the scalar curvature is non-negative at each end. Then there holds
	$$\chi(M)-\frac{1}{32\pi^2}\int_M\left(|W|^2+4Q_g\right)\ud\mu_g=\sum^k_{i=1}\nu_i$$
	where  in the inverted coordinates centered at each end,
	$$\nu_i=\lim_{r\to\infty}\frac{(vol(\partial B_r))^{\frac{4}{3}}}{4(2\pi^2)^{\frac{1}{3}}vol(B_r\backslash B_1(0))}.$$
\end{theorem}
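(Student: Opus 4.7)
The plan is to apply the Chern-Gauss-Bonnet formula for compact $4$-manifolds with boundary to an exhaustion of $M$ and to show that the boundary transgression on each end converges to $32\pi^2 \nu_i$.

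First, for $R > 1$, I would form the compact manifold with boundary
$$M_R := N \cup \bigcup_{i=1}^k \{x \in E_i : 1 \le |x| \le R\},$$
whose boundary $\partial M_R$ consists of $k$ Euclidean spheres $S_R^{(i)} := \{|x| = R\} \subset E_i$ in the flat coordinates of each end. Since $M_R$ is a deformation retract of $M$, we have $\chi(M_R) = \chi(M)$, and the Chern-Gauss-Bonnet formula for compact $4$-manifolds with boundary gives
$$\int_{M_R}\left(|W|^2 + 4Q_g\right)\ud\mu_g + \sum_{i=1}^k \int_{S_R^{(i)}}\mathcal{T}_g = 32\pi^2\chi(M),$$
where $\mathcal{T}_g$ denotes the Chern-Pfaffian transgression $3$-form, built from the second fundamental form of $S_R^{(i)}$ together with the induced and intrinsic curvatures.

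Second, I would record the asymptotics at each end. Because $R_g \ge 0$ near infinity on $E_i = (\mr^4 \setminus B_1(0), e^{2w_i}|dx|^2)$, Proposition \ref{prop: normal end} ensures that $w_i$ is normal, so the decomposition \eqref{normal end} is valid. The asymptotic lemmas of Section \ref{sec: simple ends}, namely Lemma \ref{lem:|x|^2nabla^2w}, Lemma \ref{lem: r partial w and bar w}, and Lemma \ref{lem:e^ku=ek bar u}, then pin down the averages $\fint_{\partial B_r}(-\Delta)w_i$, $\fint_{\partial B_r}|\nabla w_i|^2$, $r\bar w_i'(r)$, and $\fint_{\partial B_r} e^{kw_i}$ as $r \to \infty$.

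Third, I would evaluate $\int_{S_R^{(i)}}\mathcal{T}_g$ in flat coordinates. Since $g|_{E_i}$ is conformally flat, the transgression form admits an explicit expression in terms of $w_i$, $\nabla w_i$, and $\Delta w_i$. After integration by parts on $\partial B_R$ and regrouping, the leading contribution is precisely $32\pi^2 \cdot I_g(R)$, where $I_g$ is the isoperimetric ratio defined in Section \ref{sec: simple ends}, and the remaining pieces are controlled by the asymptotic lemmas and vanish in the limit. Proposition \ref{prop in CQY} then gives $\lim_{R\to\infty} I_g(R) = 1 + \lim_{r\to\infty} r\bar w_i'(r) = \nu_i$, so each boundary integral tends to $32\pi^2 \nu_i$. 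Passing $R \to \infty$ in the Gauss-Bonnet identity above yields the claim. The main obstacle is the explicit algebraic reduction of the transgression $3$-form on a large Euclidean sphere in a conformally flat end to the isoperimetric ratio $I_g(R)$ plus negligible error terms; this requires a lengthy but direct calculation carried out in \cite{CQY2}, and normality of $w_i$ is indispensable because otherwise $r\bar w_i'(r)$ need not converge and the boundary integral may fail to have a limit.
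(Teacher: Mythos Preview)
The paper does not give a proof of this statement at all: Theorem~\ref{CQY theorem} is explicitly labeled ``See Corollary in \cite{CQY2}'' and is quoted verbatim from \cite{CQY2} as a black-box input. In the paper it serves only as the key identity which, combined with Lemma~\ref{lem: 0 leq I_g leq 1} and Lemma~\ref{lem: I_g =0}, immediately yields Theorem~\ref{thm: chim(M) R_g geq 0}. So there is no in-paper argument to compare against.

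Your outline is a reasonable sketch of how the proof in \cite{CQY2} actually proceeds: apply the Chern--Gauss--Bonnet formula with boundary to an exhaustion $M_R$, and show that on each conformally flat end the transgression integral over $\{|x|=R\}$ tends to $32\pi^2\nu_i$. Two remarks. First, your invocation of Proposition~\ref{prop in CQY} to conclude $\lim_{R\to\infty} I_g(R)=\nu_i$ is slightly circular as phrased, since $\nu_i$ is \emph{defined} in the theorem statement to be this limit; what Proposition~\ref{prop in CQY} actually provides is the \emph{existence} of the limit and the identity $\lim I_g(r)=1+\lim r\bar w'(r)$, which is what the transgression computation needs. Second, the heart of the matter---reducing the boundary transgression $3$-form on a conformally flat end to $32\pi^2 I_g(R)$ plus $o(1)$---is precisely the lengthy computation in \cite{CQY2} that you defer to; without it the argument is incomplete, but you correctly flag this as the main obstacle and correctly identify normality of $w_i$ as indispensable for the convergence of the boundary term.
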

\begin{remark}
	The quantity $Q_g$ defined in \cite{CQY2} is half the value of the corresponding definition in this paper.
\end{remark}

\section{Conformal mass and normal metric}\label{sec: conformal mass}

In this section, we investigate the equivalence between the conformal mass $m_c(g)$
 admitting a lower bound and the normal  metric.
 
 Before commencing the proof of Theorem \ref{thm: conformal mass}, we present the following lemma. In essence, this result has already been utilized and demonstrated in \cite{CQY}, \cite{WW}, and \cite{Li 23 Q-curvature} more or less. To facilitate readers' comprehension, we provide a repetition of its proof.
\begin{lemma}\label{lem: Delat h}
	If $h(x)$ is a smooth function on $\mr^n$ satisfying $\Delta^mh=0$ for some integer $m\geq 1$,  for any fixed $x\in \mr^n$, there holds
	\begin{equation}\label{growth condition}
		\fint_{B_r(x)}\Delta h\ud y+c\fint_{B_r(x)}|\nabla h|^2\ud y\leq o(1)
	\end{equation}
	where $c$ is a positive constant and $o(1)\to0 $ as $r\to\infty$. Then
	$h(x)$ must be a constant.
\end{lemma}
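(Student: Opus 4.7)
The plan is to proceed by induction on $m$. For the base case $m=1$, the hypothesis reduces to $c \fint_{B_r(x)} |\nabla h|^2 \, \ud y \leq o(1)$ since $\Delta h \equiv 0$. Harmonicity of $h$ gives $\Delta(|\nabla h|^2) = 2|\nabla^2 h|^2 \geq 0$, so $|\nabla h|^2$ is subharmonic, and the sub-mean-value property yields $|\nabla h(x)|^2 \leq \fint_{B_r(x)} |\nabla h|^2 \, \ud y \to 0$, forcing $h$ to be constant.

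For the inductive step, assume the claim for order $m-1$ and aim to show $\Delta^{m-1} h \equiv 0$, so the induction hypothesis applies. Pizzetti's mean-value formula for the polyharmonic function $\Delta h$ of order $m-1$ gives
\begin{equation*}
	\fint_{B_r(x)} \Delta h \, \ud y = \sum_{j=0}^{m-2} \kappa_j \, r^{2j}\, \Delta^{j+1} h(x), \qquad \kappa_j > 0.
\end{equation*}
Since $|\nabla h|^2 \geq 0$, the hypothesis forces this polynomial in $r^2$ to be bounded above by $o(1)$ as $r \to \infty$, so its leading coefficient must be non-positive; thus $\Delta^{m-1} h(x) \leq 0$ for every $x \in \mr^n$. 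As $\Delta^{m-1} h$ is itself harmonic on $\mr^n$, Liouville's theorem (a harmonic function bounded above on $\mr^n$ is constant) yields $\Delta^{m-1} h \equiv a$ for some constant $a \leq 0$.

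To exclude $a < 0$, decompose $h = (a/K)|y|^{2(m-1)} + h_0$ with $K = \Delta^{m-1}|y|^{2(m-1)} > 0$ and $h_0$ polyharmonic of order $m-1$. Specializing the hypothesis to $x = 0$ and expanding $|\nabla h|^2$, the explicit radial piece contributes $\tfrac{4(m-1)^2 a^2}{K^2} \fint_{B_r(0)} |y|^{4m-6}\, \ud y \sim a^2 r^{4m-6}$, while the cross term $\fint_{B_r(0)} |y|^{2m-4}\, y \cdot \nabla h_0 \, \ud y$ is controlled by iterated integration by parts exploiting $\Delta^{m-1} h_0 = 0$ (in the prototype case $m=2$ it vanishes outright by Green's identity, since $h_0$ is harmonic and so $\fint_{B_r(0)} y \cdot \nabla h_0 = 0$). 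The upshot is $\fint_{B_r(0)} |\nabla h|^2 \gtrsim a^2 r^{4m-6}$ for large $r$, whereas $\fint_{B_r(0)} \Delta h$ grows at most like $r^{2m-4}$. For $m \geq 2$ we have $4m-6 > 2m-4$, so the $|\nabla h|^2$ term dominates and drives the left-hand side to $+\infty$, contradicting the $o(1)$ upper bound. Hence $a = 0$, $h$ is polyharmonic of order $m-1$, and the induction closes.

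The main obstacle is precisely the cross-term estimate in the final step: one must verify that $\nabla h_0$ cannot cancel the leading radial vector field $\tfrac{2(m-1)a}{K}|y|^{2m-4} y$ on average. The conceptual obstruction is that $|y|^{2(m-1)}$ is polyharmonic of order $m$ but not of order $m-1$, so its gradient cannot be realized as the gradient of any polyharmonic function of strictly lower order; making this rigidity quantitative through iterated Green identities on $B_r(0)$ delivers the needed growth lower bound.
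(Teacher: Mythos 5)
Your strategy is viable and genuinely different from the paper's at the decisive step. Both arguments first obtain $\Delta^{m-1}h\equiv a\le 0$ from Pizzetti's formula plus Liouville. But where you exclude $a<0$ by subtracting the explicit profile $(a/K)|y|^{2(m-1)}$ and showing the radial gradient square forces $\fint_{B_r(0)}|\nabla h|^2\gtrsim a^2r^{4m-6}$, which overwhelms $\fint_{B_r(0)}\Delta h=O(r^{2m-4})$, the paper instead applies Pizzetti to $\partial_i h$ (polyharmonic of order $m-1$ once $\Delta^{m-1}h$ is constant) and uses H\"older in the form $\bigl(\fint_{B_r(x)}\partial_i h\,\ud y\bigr)^2\le \fint_{B_r(x)}|\nabla h|^2\ud y$: the square of the leading Pizzetti coefficient grows like $r^{4m-8}$, which for $m>2$ cannot be absorbed by $\fint\Delta h=O(r^{2m-4})$, so $\Delta^{m-2}\partial_i h\equiv 0$ for all $i$, hence $\Delta^{m-1}h\equiv 0$, and one inducts down (with $m=2$ handled separately: boundedness of $\fint|\nabla h|^2$ gives $\partial_i h$ bounded above, hence constant by Liouville). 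The paper's route thus needs no profile subtraction and produces no cross term; your route buys a single unified induction (your $m=1$ base case via subharmonicity of $|\nabla h|^2$ is also fine, and for $m=2$ your cross term vanishes exactly, as you note).

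The only point at which your write-up is not yet a proof is exactly the one you flag: the cross term $\fint_{B_r(0)}|y|^{2m-4}\,y\cdot\nabla h_0\,\ud y$ for $m\ge 3$. It does work out, and the quantitative statement you need is short: by the divergence theorem,
\begin{equation*}
\int_{B_r(0)}|y|^{2m-4}\,y\cdot\nabla h_0\,\ud y=r^{2m-3}\int_{\partial B_r(0)}h_0\,\ud\sigma-(n+2m-4)\int_{B_r(0)}|y|^{2m-4}h_0\,\ud y,
\end{equation*}
and inserting the spherical Pizzetti expansion $\fint_{\partial B_s(0)}h_0\,\ud\sigma=\sum_{j=0}^{m-2}d_j s^{2j}\Delta^j h_0(0)$ into both terms, the $j=0$ contributions cancel exactly and, after dividing by $|B_r(0)|$, what remains is $\sum_{j=1}^{m-2}e_j\,r^{2m-4+2j}\Delta^j h_0(0)=O(r^{4m-8})$ with coefficients depending only on finitely many derivatives of the fixed function $h_0$ at the origin. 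Since $|\nabla h_0|^2\ge 0$, this is strictly lower order than $a^2r^{4m-6}$, your contradiction goes through, and the induction closes. As submitted, however, this decisive estimate is asserted rather than proved (you yourself call it the main obstacle), so you must either include the computation above or adopt the paper's gradient-Pizzetti/H\"older shortcut, which bypasses it entirely.
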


\begin{proof}
	Firstly, if $m=1$, $h$ is harmonic function. For any integer $i$ with $1\leq  i \leq n$, there holds
	$\Delta\partial_ih=0$. 
Since $\partial_ih$ is harmonic, the mean value property yields that
\begin{equation}\label{patial_i h }
		\partial_ih(x)=\fint_{B_r(x)}\partial_i h\ud y
	\leq (\fint_{B_r(x)}|\nabla h|^2\ud y)^{\frac{1}{2}}.
\end{equation}
Using the assumption \eqref{growth condition} and letting $r\to\infty$, one has 
$\partial_i h\leq 0$. The classical Liouville theorem yields that $\partial_i h$ must be a constant. Hence $|\nabla h|\equiv C$. Inserting it and $\Delta h=0$ into \eqref{growth condition} and letting $r\to\infty$, we obtain that  $|\nabla h|=0$ which yields that $h$ must be a constant.

Secondly, if $m=2$, using \eqref{growth condition} and the fact $c>0$, one has
\begin{equation}\label{Delta h growth}
	\fint_{B_r(x)}\Delta h\ud y\leq o(1).
\end{equation}
Since $\Delta h$ is a harmonic function, making use of \eqref{Delta h growth},  Liouville theorem yields that 
\begin{equation}\label{Delta h=C_1}
	\Delta h\equiv C_1
\end{equation}
 for some non-positive constant $C_1$. Immediately, one  has $\Delta\partial_i h =0$. Same as before, the mean value property shows that 
 \begin{equation}\label{patial_i h2 }
 	\partial_ih(x)=\fint_{B_r(x)}\partial_i h\ud y
 	\leq (\fint_{B_r(x)}|\nabla h|^2\ud y)^{\frac{1}{2}}.
 \end{equation}
Using \eqref{Delta h=C_1} and   \eqref{growth condition}, one has
$$\partial_i h \leq C.$$	
Then,  Liouville theorem shows that $\partial_i h$ is  constant. As a direct computation, there holds $\Delta h=0$.  Thus, we reduce the case $m=2$ to the case $m=1$.  As proved before, $h(x)$ must be a constant.

Finally, if $m>2$, using  Pizzetti’s formula (see Lemma 3 in \cite{Mar MZ}) for polyharmonic functions,
we have
\begin{equation}\label{Pizze formula}
	\fint_{B_r(x)}\Delta h\ud y=\sum^{m-2}_{j=0}c_jr^{2j}\Delta^j(\Delta h)(x)
\end{equation}
	where $c_j$ are all positive constants. Using the condition \eqref{growth condition} and letting $r\to\infty$,  the coefficient of leading term in  \eqref{Pizze formula} must be non-positive i.e.
	\begin{equation}\label{Delta m-1 h leq 0}
		\Delta^{m-1}h \leq 0.
	\end{equation}
Since $\Delta^{m-1} h$ is harmonic, Liouville theorem yields that 
	$$\Delta^{m-1} h=C_2\leq 0$$
	where $C_2$ is a constant. Via a direct computation,  for any $1\leq i\leq n$, one has
	$$\Delta^{m-1}\partial_i h=0.$$
Using Pizzetti's formula again, one has
\begin{equation}\label{Pizz for delta^m-1 partial_i h}
	\fint_{ B_r(x)}\partial_i h\ud y=\sum^{m-2}_{j=0}c_jr^{2j}\Delta^{j}(\partial_i h)(x).
\end{equation}	
Using H\"older's ineqaulity and \eqref{growth condition},  there holds
\begin{equation}\label{grwoth for partial_i h}
	\fint_{B_r(x)}\Delta h\ud y+c\left(\fint_{B_r(x)}\partial_i h\ud y\right)^2\leq o(1).
\end{equation}
Since $m>2$, inserting \eqref{Pizze formula} and \eqref{Pizz for delta^m-1 partial_i h} into \eqref{grwoth for partial_i h},  the leading term of left side in \eqref{grwoth for partial_i h}	is 
$$c\left(c_{m-2}(\Delta^{m-2}\partial_i h)(x)\right)^2 r^{4m-8}.$$
Thus, one must have 
$$	\Delta^{m-2}\partial_i h=0.$$ Due to the arbitrary choice of $i$ and a direct computation,  there holds
$$\Delta^{m-1} h=0.$$
Using a standard inductive method, we finally obtain that $h(x)$ must be a constant. 
\end{proof}

{\bf Proof of Theorem \ref{thm: conformal mass}:}

Firstly, if the conformal mass $m_c(g)>-\infty$,  for any $r\geq 1$ and  fixed  $x\in\mr^n$, one has
\begin{equation}\label{partial B_r Re^2u geq -Cr^n-3}
	\int_{\partial B_r(x)}R_ge^{2u}\ud \sigma\geq  -Cr^{n-3}.
\end{equation} 
Then integrating \eqref{partial B_r Re^2u geq -Cr^n-3} from $1$ to $r$, there holds
$$	\int_{B_r(x)\backslash B_1(x)}R_ge^{2u}\ud x\geq -Cr^{n-2}.$$
Immediately, based on the smoothness assumption on $u(x)$,  for any $r\geq 1$, one has 
\begin{equation}\label{intR_g on B_r}
	\int_{B_r(x)}R_ge^{2u}\ud x\geq -Cr^{n-2}.
\end{equation}
For brevity, set the following notations
$$v(x):=\frac{2}{(n-1)!|\mathbb{S}^n|}\int_{\mr^n}\log\frac{|y|}{|x-y|}Q(y)e^{nu(y)}\ud y$$ and
 $$h(x):=u(x)-v(x).$$
Thus, $h(x)$ is a polyharmonic function satisfies
\begin{equation}\label{polyharmonic h}
	(-\Delta)^{\frac{n}{2}}h=0.
\end{equation}

With the help of Lemma 2.11 in \cite{Li 23 Q-curvature}, for $r>2|x|+1$, there holds
\begin{equation}\label{Delat v+nabla ^2v}
	\int_{B_r(x)}\left(|\Delta v|+|\nabla v|^2\right)\ud y\leq \int_{B_{2r}(0)}\left(|\Delta v|+|\nabla v|^2\right)\ud y\leq Cr^{n-2}.
\end{equation}
Using the representation of scalar curvature \eqref{scalar curvature def} and the estimate  \eqref{intR_g on B_r}, we obtain that 
\begin{equation}\label{Dealt u+nabla^2u}
	\int_{B_r(x)}\left(\Delta u+\frac{n-2}{2}|\nabla u|^2\right)\ud y\leq Cr^{n-2}.
\end{equation}
With the help of the estimates \eqref{Delat v+nabla ^2v} and \eqref{Dealt u+nabla^2u}, there holds
\begin{equation}\label{Delata h+nabla^2h}
		\int_{B_r(x)}\left(\Delta h+\frac{n-2}{4}|\nabla h|^2\right)\ud y\leq Cr^{n-2}.
\end{equation}
Making use of Lemma \ref{lem: Delat h}, $h(x)$ must be a constant. Hence, $u(x)$ is normal.

Conversely, suppose that $u(x)$ is normal. Through a straightforward change of coordinates, for any fixed $x\in\mr^n$, it follows from Lemma \ref{lem: r^2 Dealtu} and Lemma \ref{lem:r^2nabla u^2} that
$$r^2\fint_{\partial B_r(x)}\Delta u\ud \sigma +\frac{n-2}{2}r^2\fint_{\partial B_r(x)}|\nabla u|^2 \ud \sigma \to-(n-2)\alpha_0+\frac{n-2}{2}\alpha_0^2$$
as $r\to\infty.$
Using \eqref{scalar curvature def}, the above estimate is equivalent to 
$$\frac{1}{(n-1)(n-2)|\mathbb{S}^{n-1}|}r^{n-3}\int_{\partial B_r(x)}R_ge^{2u}\ud\sigma\to \alpha_0(2-\alpha_0)$$
as $r\to\infty.$ Since the choice of  $x$ is arbitrary, one has
$$m_c(g)=\alpha_0(2-\alpha_0).$$

Thus, we finish the proof.

\hspace{2em}

Combining Theorem \ref{thm: conformal mass} with the  Pohozaev-type identity  for Q-curvature (see Theorem 1.1 in \cite{Xu05}), we have the following  interesting corollary.
\begin{corollary}\label{cor: PH identity}
	Given a  conformal metric $g=e^{2u}|dx|^2$ on $\mr^n$ where $n\geq 4$ is an even integer with finite total Q-curvature. Supposing that the metric $g$ is normal, then there holds
	$$m_c(g)=-\frac{4}{n!|\mathbb{S}^n|}\int_{\mr^n}x\cdot \nabla Q e^{nu}\ud x.$$
\end{corollary}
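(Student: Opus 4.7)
The plan is to combine the explicit formula for $m_c(g)$ in Theorem \ref{thm: conformal mass} with the Pohozaev-type identity for Q-curvature established in \cite{Xu05}. Since $g$ is normal, part (2) of Theorem \ref{thm: conformal mass} already gives
\begin{equation*}
m_c(g) = \alpha_0(2-\alpha_0) = 2\alpha_0 - \alpha_0^2,
\end{equation*}
where $\alpha_0 = \frac{2}{(n-1)!|\mathbb{S}^n|}\int_{\mr^n}Qe^{nu}\,\ud x$. Thus the task reduces to expressing the right-hand side of the desired identity in terms of $\alpha_0$ alone.

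The second ingredient is Xu's Pohozaev-type identity (Theorem 1.1 in \cite{Xu05}) for solutions $u$ of $(-\Delta)^{n/2}u = Qe^{nu}$ on $\mr^n$ that are normal and have finite total Q-curvature. It takes the shape
\begin{equation*}
\int_{\mr^n}\bigl(x\cdot\nabla Q + nQ\bigr)e^{nu}\,\ud x \;=\; \frac{n!\,|\mathbb{S}^n|}{4}\,\alpha_0^2,
\end{equation*}
i.e.\ the ``boundary'' contribution at infinity, which is controlled using the asymptotics $\bar u(r) = (-\alpha_0+o(1))\log r$ established in Section \ref{section:  estimate for normal }, reads precisely $\tfrac{n!|\mathbb{S}^n|}{4}\alpha_0^2$. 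I would simply cite this identity.

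Rearranging it and using the definition of $\alpha_0$ yields
\begin{equation*}
\alpha_0^2 \;=\; \frac{4}{n!\,|\mathbb{S}^n|}\int_{\mr^n}x\cdot\nabla Q\,e^{nu}\,\ud x \;+\; \frac{4n}{n!\,|\mathbb{S}^n|}\int_{\mr^n}Qe^{nu}\,\ud x \;=\; \frac{4}{n!\,|\mathbb{S}^n|}\int_{\mr^n}x\cdot\nabla Q\,e^{nu}\,\ud x + 2\alpha_0.
\end{equation*}
Substituting back into $m_c(g) = 2\alpha_0 - \alpha_0^2$ then gives exactly
\begin{equation*}
m_c(g) \;=\; -\frac{4}{n!\,|\mathbb{S}^n|}\int_{\mr^n}x\cdot\nabla Q\,e^{nu}\,\ud x,
\end{equation*}
which is the claim.

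The only potentially delicate point is verifying that the form of Xu's Pohozaev identity cited from \cite{Xu05} matches the normalization used here (in particular, the coefficient of $\alpha_0^2$ and the absence of additional boundary terms under the finite total Q-curvature and normality hypotheses). I would expect this to be a direct check against the statement in \cite{Xu05}, but it is the only place where something could slip; once that normalization is confirmed, the corollary is a one-line algebraic consequence.
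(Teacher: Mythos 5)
Your proposal is correct and follows essentially the same route as the paper, which proves the corollary precisely by combining part (2) of Theorem \ref{thm: conformal mass} (giving $m_c(g)=\alpha_0(2-\alpha_0)$) with the Pohozaev-type identity of Theorem 1.1 in \cite{Xu05}. The normalization of Xu's identity you cite, $\int_{\mr^n}(x\cdot\nabla Q+nQ)e^{nu}\,\ud x=\tfrac{n!|\mathbb{S}^n|}{4}\alpha_0^2$, is indeed the correct one for normal solutions in this paper's convention, and your algebra then yields the stated formula.
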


\hspace{2em}

{\bf Proof of Theorem \ref{thm:positive mass}:}

When the  complete metric is normal, Theorem 1.3 in \cite{CQY} (See also \cite{Fa}, \cite{NX})  shows that 
\begin{equation}\label{normal Q leq 1/2}
\int_{\mr^n}Qe^{nu}\ud x\leq \frac{(n-1)!|\mathbb{S}^n|}{2}.
\end{equation}
Based on the assumption $Q\geq0$ and the estimate \eqref{normal Q leq 1/2}, one has
\begin{equation}\label{ 0 leq Q leq 1/2}
	0\leq \int_{\mr^n}Qe^{nu}\ud x\leq \frac{(n-1)!|\mathbb{S}^n|}{2}.
\end{equation}
Using the  above estimate and Theorem \ref{thm: conformal mass}, we show that 
$$m_c(g)\geq 0.$$
On one hand, when the equality holds, one must have
$$ \int_{\mr^n}Qe^{nu}\ud x=0,$$
which yields that $Q\equiv 0$.  Then using the normal metric assumption and \eqref{integral equation}, we finally obtain that $u$ must be a constant. 
On the other hand, when $u$ is a constant, it is obvious to see that $m_c(g)=0$. Thus,  we finish our proof.

\end{document}